\providecommand{\XLeftMargin}{2cm}
\providecommand{\XTopMargin}{1.8cm}
\providecommand{\XRightMargin}{2cm}
\providecommand{\XBottomMargin}{1.8cm}
\providecommand{\XLeftMargin}{2cm}
\providecommand{\XTopMargin}{1.8cm}
\providecommand{\XRightMargin}{2cm}
\providecommand{\XBottomMargin}{1.8cm}
\newlength\XXXMyLength\makeatletter
\def\Xadjustleft[#1]{\setlength\XXXMyLength{#1}\ifnum\numexpr\leftmargin>\numexpr\XXXMyLength\hspace{-\XXXMyLength}\else\hspace{-\leftmargin}\fi}
\theoremstyle{plain}
\newtheorem{thm}{Theorem}[section]
\newtheorem*{thm*}{Theorem}
\newtheorem*{lem*}{Lemma}
\newtheorem{prop}[thm]{Proposition}
\newtheorem*{prop*}{Proposition}
\newtheorem{cor}[thm]{Corollary}
\newtheorem*{cor*}{Corollary}
\newtheorem*{conv}{Convention}
\newtheorem*{conj*}{Conjecture}
\theoremstyle{definition}
\newtheorem*{cons*}{Construction}
\newtheorem{df}[thm]{Definition}
\newtheorem*{df*}{Definition}
\newtheorem{nota}[thm]{Notation}
\newtheorem*{nota*}{Notation}
\newtheorem*{qu*}{Question}
\newtheorem{rmk}[thm]{Remark}
\newtheorem*{rmk*}{Remark}
\newtheorem*{ex*}{Example}
\newcommand{\bC}{\mathbb{C}}
\newcommand{\bG}{\mathbb{G}}
\newcommand{\bO}{\mathbb{O}}
\newcommand{\bR}{\mathbb{R}}
\newcommand{\bZ}{\mathbb{Z}}
\DeclareMathOperator{\Aut}{Aut}
\DeclareMathOperator{\Gal}{Gal}
\DeclareMathOperator{\Res}{Res}
\DeclareMathOperator{\Stab}{Stab}
\DeclareMathOperator{\Tr}{Tr}
\DeclareMathOperator{\Gl}{GL}
\DeclareMathOperator{\Sl}{SL}
\DeclareMathOperator{\SO}{SO}
\DeclareMathOperator{\UU}{U}
\DeclareMathOperator{\SU}{SU}
\newcommand{\surjects}{\twoheadrightarrow}
\newcommand{\injects}{\hookrightarrow}
\def\emphh{\textbf}
\def\sumprime{\mathop{\sum{\raise3pt\hbox{${}'$}}}}
\def\revddots{\mathinner{\mkern1mu\raise\p@
\vbox{\kern7\p@\hbox{.}}\mkern2mu
\raise4\p@\hbox{.}\mkern2mu\raise7\p@\hbox{.}\mkern1mu}}
\providecommand{\abs}[1]{\left\vert #1 \right\vert}
\newcommand{\comment}[1]{}
\newcommand{\GL}{\Gl}
\newcommand{\SL}{\Sl}
\begin{document}

\title{Classification of Certain Subgroups of G2}
\author{Andrew Fiori}

\ead{afiori@mast.queensu.ca}
\address{Department of Mathematics and Statistics, Queens University, Jeffery Hall, University Ave, Office 517, Kingston, ON, K7L 3N6, Canada}

\begin{abstract}
We give a concrete characterization of the rational conjugacy classes of maximal tori in groups of type G2, focusing on the case of number fields and p-adic fields.
In the same context we characterize the rational conjugacy classes of A2 subgroups of G2.
Having obtained the concrete characterization, we then relate it to the more abstract characterization which can be given in terms of Galois cohomology.

We note that these results on A2 subgroups were simultaneously and independently developed in the work of \cite{Hooda} whereas the results on tori were simultaneously and independently developed in the work of \cite{LeeGilleBeliPreprint}.
\end{abstract}

\begin{keyword}
Algebraic Groups \sep Subgroups \sep Maximal Tori \sep G2 
\end{keyword}

\maketitle

\section{Introduction}

A very natural problem in the theory of algebraic groups is to attempt to characterize or describe equivalences classes of maps:
\[ f : H \rightarrow G. \]
For reductive groups $H$ and $G$ over an algebraically closed field, the problem is at least in some sense well understood.
Consequently, when we instead wish to work over a field $k$ which is not algebraically closed, it is natural to consider instead the following problem.

Fix reductive algebraic groups $\tilde{H}_{\overline{k}}$ and $\tilde{G}_{\overline{k}}$ and a map $\tilde{f}_{\overline{k}} : \tilde{H}_{\overline{k}} \rightarrow \tilde{G}_{\overline{k}}$ all defined over the algebraic closure $\overline{k}$ of $k$.
One would like a concrete description of all triples:
\[ (H_k, G_k, f : H_k \rightarrow G_k) \]
consisting of reductive groups $H_k$, $G_k$ and a map between them, all defined over $k$,
which become equivalent to $\tilde{f}_{\overline{k}} : \tilde{H}_{\overline{k}} \rightarrow \tilde{G}_{\overline{k}}$ after base change to the algebraic closure.
Here equivalence over $k$ (respectively $\overline{k}$) means $G_k(k)$ (respectively, $\tilde{G}_{\overline{k}}(\overline{k})$) conjugacy.

The numerous special cases of the above general problem have applications in a variety of fields, especially number theory, where as a consequence of various functorialities such maps give results concerning the buildings of the groups, automorphic forms on the groups or special cycles on associated Shimura varieties.
In particular, when $G_k$ is associated to a Shimura variety, the characterization of maximal tori gives information about its special points (see \cite[3.15]{Deligne_Shimura1}).
Over $p$-adic fields, the structure of maximal tori gives information about the Bruhat-tits building of the group $G_k$. Moreover, combining expected functorialities in the Langlands correspondence and the local Langlands correspondence for tori, the structure of maximal tori gives information about the Langlands correspondence for $G_k$ and can be useful in constructing supercuspidal representations (see \cite{LawrenceToriClassical,MorrisToriSymplectic,RoeThesis}). 

Motivated by these and other reasons, several characterizations of maximal tori in reductive groups exist.
A very general, but not necessarily concrete description via Galois cohomology can be found in \cite{WalkerThesisTori} and \cite{ReederElliptic}.
In specific cases some aspects of this classification have been made concrete, \cite{WalkerThesisTori}, for example, gives a concrete interpretation for the classification  of the rational conjugacy classes of tori in pure inner forms of orthogonal and unitary groups.
The concrete problem of classifying when a given torus actually embeds in an orthogonal group is the main subject of my previous paper \cite{Fiori1} and is also discussed in \cite{Brus_OrthTori,BayerTori}.
A detailed study of the case of arbitrary forms of (special) unitary groups is the subject of my joint work with David Roe \cite{FioriRoe}. Other results related to the case of unitary groups can be found in, for example, \cite{CKMFrobeniusAlgebras}.
The question of a local-global principle relating the existence of local embeddings to the existence of global embeddings is looked at, for example in, \cite{ PR_localglobal, BayerTori}.
Other work on this type of problem includes \cite{KariyamaTori}.
 
In this paper we shall be working in the context of $k$ a number field or one of its completions, and we shall be taking $\tilde{G}_{\overline{k}}$ to be the (unique) group of type G2.
We shall be interested in two different cases for the group $\tilde{H}_{\overline{k}}$: the case where it is a simply connect group of type A2, and the case of $\bG_m^2$. In this later case we shall typically refer to $H_k$ as $T_k$ to indicate it is an algebraic torus. The injective maps $f$ shall be discussed in more detail later once we give an adequate description of the group G2. For now we indicate only that the torus $\bG_m^2$ shall be a maximal torus of $\tilde{G}_{\overline{k}}$ and its map into $\tilde{G}_{\overline{k}}$ shall factor through a unique simply connected A2 subgroup $\tilde{H}_{\overline{k}}$.
The main result of this paper is thus to give a characterization of the rational conjugacy classes of maximal tori in groups of type G2 over number fields. We also obtain auxiliary results concerning rational conjugacy classes of these A2 subgroups.

We note that these results on A2 subgroups were simultaneously and independently developed in the work of \cite{Hooda} whereas the results on tori were simultaneously and independently developed in the work of \cite{LeeGilleBeliPreprint}. This paper likely does not contain significant results not already found in one or the other of these. Our treatment may be helpful to non-experts, as it might be more elementary than and includes many background facts which experts will consider well-known. 

This paper is organized as follows:
\begin{itemize}
\item In section \ref{sec:octonions} we give the basic properties of the octonions that are relevant in the sequel.
\item In section \ref{sec:g2} we describe the automorphisms of the octonions and introduce the groups $G_k$.
\item In section \ref{sec:forms} we give results concerning the forms of algebraic groups, focusing on the cases relevant in the sequel.
\item In section \ref{sec:classifications} we prove our main results concerning the rational conjugacy classes of maximal tori.
\item In secti/on \ref{sec:groupcohom} we discuss the connection between our results and the abstract classification given in \cite{ReederElliptic} using Galois cohomology.
\end{itemize}

The main results concerning A2 subgroups are as follows.
\begin{thm*}[\ref{thm:SUclassification}]
Fix an octonion algebra $\bO$ over $k$ and let $G_k$ be the algebraic group associated to its automorphism group and set $\tilde{G}_{\overline{k}} = G_{\overline{k}}$.
Fix $x\in \bO\otimes_k\overline{k}$ perpendicular to $1_{\bO}$.
Fix $\tilde{H}_{\overline{k}} = \Stab_x(G_{\overline{k}})$ to be the stabilizer in $G_{\overline{k}}$ of $x$ and fix its natural inclusion:
\[ \tilde{f} : \tilde{H}_{\overline{k}} \injects \tilde{G}_{\overline{k}}. \]
Then the set of triples  $(H_k, G_k, f : H_k \rightarrow G_k)$, considered up to $G_k(k)$-conjugacy, that become $ \tilde{G}_{\overline{k}}(\overline{k})$-conjugate to $\tilde{f} : \tilde{H}_{\overline{k}} \injects \tilde{G}_{\overline{k}}$ after base change are in natural bijection with the isomorphism classes of quadratic \'etale algebras $A$ over $k$ that have an embedding $A\injects \bO$.
\end{thm*}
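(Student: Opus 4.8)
The plan is to exhibit mutually inverse maps between the two sets, the nontrivial inputs being the description of $\Stab_x(G_{\overline k})$ as a form of $\SL_3$ together with its space of invariants, the transitivity of $G_{\overline k}(\overline k)$ on trace-zero vectors of a fixed nonzero norm, and the transitivity of $\Aut(\bO)(k)$ on quadratic \'etale subalgebras of $\bO$ of a fixed isomorphism type. Since $\tilde H_{\overline k}$ is to be of type $A_2$ we may assume $N(x)\neq 0$; then $\overline k[x]=\overline k\cdot 1\oplus\overline k\cdot x$ is a quadratic \'etale subalgebra of $\bO\otimes_k\overline k$, the stabilizer $\tilde H_{\overline k}=\Stab_x(G_{\overline k})$ coincides with the pointwise stabilizer of $\overline k[x]$ (every automorphism fixes $1$), and the space of $\tilde H_{\overline k}$-invariants in $\bO\otimes_k\overline k$ is precisely $\overline k[x]$, which is two-dimensional.

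Given a triple $(H_k,G_k,f)$ in the left-hand set, identify $H_k$ with $f(H_k)\subseteq G_k=\Aut(\bO)$ and put $A\dfn\bO^{H_k}$. Since $H_k$ acts by algebra automorphisms, $A$ is a unital $k$-subalgebra of $\bO$; since $H_k$ is smooth, the formation of invariants commutes with base change, so $A\otimes_k\overline k$ is the invariant space of some $G_{\overline k}(\overline k)$-conjugate of $\tilde H_{\overline k}$, hence is two-dimensional and \'etale. Thus $A$ is a quadratic \'etale $k$-algebra embedded in $\bO$, and conjugating $f$ by $g\in G_k(k)$ replaces $A$ by the $k$-isomorphic subalgebra $g(A)$, so $(H_k,G_k,f)\mapsto[A]$ is a well-defined map to the right-hand set. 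In the reverse direction, let $A/k$ be quadratic \'etale with an embedding $\iota\colon A\injects\bO$; write $A=k[y]$ with $\tr(y)=0$, so $N(\iota(y))\neq 0$ because $A$ is \'etale, and the pointwise stabilizer $\Stab_{\iota(A)}(G_k)$ equals $\Stab_{\iota(y)}(G_k)$. Over $\overline k$ this is conjugate to $\tilde H_{\overline k}$: rescaling $\iota(y)$ by a suitable square root in $\overline k^\times$ does not change its stabilizer but adjusts its norm to $N(x)$, whereupon transitivity of $G_{\overline k}(\overline k)$ on trace-zero vectors of norm $N(x)$ applies. Hence $(\Stab_{\iota(A)}(G_k),G_k,\text{inclusion})$ lies in the left-hand set. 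That this construction descends to a well-defined map out of the right-hand set requires that it be independent, up to $G_k(k)$-conjugacy, of $\iota$; this is exactly the transitivity of $\Aut(\bO)(k)$ on quadratic \'etale subalgebras of $\bO$ isomorphic to $A$, which I would obtain from the extension theorem for composition subalgebras of an octonion algebra, namely that any isomorphism between two composition subalgebras of $\bO$ extends to an automorphism of $\bO$.

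It remains to check the two maps are mutually inverse. Starting from a triple, $A=\bO^{H_k}$ gives $H_k\subseteq\Stab_A(G_k)$; after base change both groups are connected of dimension $8$ (each being $G_{\overline k}(\overline k)$-conjugate to $\tilde H_{\overline k}\cong\SL_3$) and one contains the other, so they coincide over $\overline k$, hence over $k$; thus $H_k=\Stab_A(G_k)$ and the original triple is recovered from $[A]$. Starting instead from $[A]$ with embedding $\iota$, the invariant space $\bO^{\Stab_{\iota(A)}(G_k)}$ base-changes to the two-dimensional invariant space of a conjugate of $\tilde H_{\overline k}$, hence is two-dimensional; as it visibly contains $\iota(A)$ it equals $\iota(A)$, so $\bO^{\Stab_{\iota(A)}(G_k)}\cong A$ and $[A]$ is recovered. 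This establishes the bijection.

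The main obstacle is the transitivity statement underlying injectivity: that any two $k$-isomorphic quadratic \'etale subalgebras of $\bO$ are $\Aut(\bO)(k)$-conjugate. Witt's extension theorem for the norm form only produces an isometry of $\bO$ carrying one copy onto the other, and upgrading this to an algebra automorphism is precisely where the structure theory of composition algebras must be invoked; everything else is either the local structure of $\Stab_x$ recalled in the earlier sections or a formal base-change and descent argument.
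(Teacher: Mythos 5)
Your proof is correct and follows essentially the same route as the paper: both attach to a rational conjugate of $\tilde{H}_{\overline{k}}$ its two-dimensional fixed subalgebra (equivalently the fixed $k$-rational line in $\bO^0$) and then invoke the transitivity of $G(\overline{k})$, respectively $G_k(k)$, on trace-zero elements of a given norm, which is the paper's earlier proposition and supplies the conjugacy of isomorphic quadratic \'etale subalgebras that you attribute to the composition-subalgebra extension theorem. The difference is only presentational: you verify via base change of invariants and a dimension count what the paper asserts directly, namely that each conjugate stabilizes a unique line, is recovered as its stabilizer, and is $k$-rational exactly when the line is.
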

The groups $H_k$ of the above theorem are precisely the A2 subgroups we are considering.

\begin{cor*}[\ref{cor:SUoverR}]
Fix an octonion algebra $\bO$ over $k$ and let $G_k$ be the algebraic group associated to its automorphism group.
The isomorphism classes of simply connected A2 subgroups embedding in $G_k$ are controlled by the structure of $\bO$ at the real places of $k$.
In particular, fixing a real place $k_\nu$ of $k$ we have the following cases:
\begin{itemize}
\item If $\bO$ is definite at $k_\nu$ then all $A$ appearing are imaginary extensions of $k_\nu$ (the place $\nu$ ramifies in $A$) and the special unitary group $H_{k_\nu}$ is definite.
\item If $\bO$ is indefinite at $k_\nu$ and $A$ is an imaginary extensions of $k_\nu$ then the special unitary group $H_{k_\nu}$ is indefinite.
\item If $\bO$ is indefinite at $k_\nu$ and $A$ is a real extension of $k_\nu$ (the place $\nu$ splits in $A$), then the group $H_{k_\nu}$ is split, that is, $H_{k_\nu} \simeq \SL_{3,k_\nu}$.
\end{itemize}
\end{cor*}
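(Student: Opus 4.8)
The plan is to deduce the Corollary from Theorem~\ref{thm:SUclassification} by localizing at the real places and invoking the Hasse principle for semisimple groups over number fields. By the Theorem, the simply connected A2 subgroups of $G_k$ correspond, up to $G_k(k)$-conjugacy, to the quadratic \'etale $k$-algebras $A$ admitting an embedding $A\injects\bO$; recall moreover that the associated $H_k$ is, by the construction preceding the Theorem, the special unitary group of the rank-three Hermitian $A$-module $W$ cut out as the orthogonal complement of $A$ inside $\bO$, the associated quadratic form of its Hermitian form being (up to a harmless normalization) the restriction of $N_\bO$ to $W$. Base change along $k\injects k_\nu$ replaces $A$ by $A_\nu=A\otimes_k k_\nu$, replaces $\bO$ by its localization $\bO_\nu$, and replaces $W$ by $W\otimes_k k_\nu$, whose norm form is the orthogonal complement of the norm form of $A_\nu$ inside the norm form of $\bO_\nu$. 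So the whole statement is local.

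Now fix a real place, so $k_\nu\cong\bR$. There are exactly two octonion $\bR$-algebras, distinguished by whether $N_\bO$ is positive definite, of signature $(8,0)$ (the ``definite'' case), or hyperbolic, of signature $(4,4)$ (the ``indefinite'' case); and there are exactly two quadratic \'etale $\bR$-algebras, namely $\bC$, with norm form of signature $(2,0)$, and $\bR\times\bR$, with hyperbolic norm form. First I would note that $\bC$ embeds into both octonion algebras, while $\bR\times\bR$ embeds in $\bO_\nu$ if and only if $\bO_\nu$ is split --- a definite octonion algebra is a division algebra, whereas $\bR\times\bR$ contains zero divisors. This already yields the ``$\nu$ ramifies in $A$'' part of the first bullet. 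To identify $H_{k_\nu}$ I would compute the signature of the Hermitian form on $W\otimes k_\nu$, using that restriction of scalars from $\bC$ to $\bR$ carries a rank-three Hermitian form of signature $(p,q)$ to a quadratic form of signature $(2p,2q)$. In the definite case the quadratic form on $W\otimes k_\nu$ has signature $(6,0)$, so $(p,q)=(3,0)$ and $H_{k_\nu}=\SU(3)$ is definite; in the indefinite case with $A_\nu=\bC$ it has signature $(2,4)$, so $(p,q)=(1,2)$ and $H_{k_\nu}=\SU(1,2)$ is indefinite; in the indefinite case with $A_\nu=\bR\times\bR$ the group is the special unitary group of a split quadratic \'etale algebra, hence $H_{k_\nu}\simeq\SL_{3,k_\nu}$. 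These are exactly the three bullets.

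For the global assertion that the isomorphism class of $H_k$ over $k$ is governed by the real places, I would use that $H_k$ is simply connected semisimple, so the Hasse principle identifies its isomorphism class with the family $(H_{k_\nu})_\nu$ over all places; at a complex place $H_{k_\nu}$ is split, and at a non-archimedean place a rank-three special unitary group attached to a quadratic \'etale algebra has split underlying central simple algebra and is therefore quasi-split, so it is determined by $A_\nu$ alone and contributes nothing further. Only the real places remain, and there the behaviour is as tabulated above and is constrained by $\bO_\nu$; the surjectivity in Theorem~\ref{thm:SUclassification} then shows that every combination of real-place behaviours allowed by the three bullets actually occurs.

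The step requiring the most care is the precise identification of the Hermitian form attached to $H=\Stab_x$ and the bookkeeping of signatures through the orthogonal decomposition of $N_\bO$ and through restriction of scalars --- pinning down $(p,q)$ exactly, rather than up to a transposition or an erroneous factor of two --- together with invoking the local classification of rank-three unitary groups (quasi-split at all finite and complex places) so that the Hasse-principle reduction genuinely lands on the real places alone. The remaining verifications are routine.
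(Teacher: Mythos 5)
Your proposal is correct and follows essentially the same route as the paper: reduce to the real places via Theorem \ref{thm:SUclassification}, determine which quadratic \'etale algebras embed in each real octonion algebra, and read off the (in)definiteness of the rank-three Hermitian form on $A_x^\perp$. The paper's own proof is simply a terser version of this (compactness of $G_{k_\nu}(k_\nu)$ in the definite case, forced indefiniteness of the induced Hermitian form, and uniqueness of the Hermitian space for the split algebra), so your explicit signature bookkeeping and the Hasse-principle remark are consistent elaborations rather than a different method.
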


Concerning maximal tori we conclude the following.
\begin{thm*}[\ref{thm:ClassifyToriG2}]
Fix an octonion algebra $\bO$ over $k$ and the group $G_k=\Aut_k(\bO)$.
The $G_k(k)$-conjugacy classes of maximal tori $T_k$ in $G_k$ are in bijection with triples $(A,E,\lambda)$ consisting of a quadratic \'etale algebra $A$ (with involutions $\sigma$) which embeds in $\bO$, a cubic \'etale algebra $E$ and an element $\lambda\in (E^\times)/N_{E \otimes_k A/E}((E \otimes_k A)^\times)\Aut_k(E) $ such that the $A$-Hermitian space $E\otimes_k A$ of dimension $3$ with Hermitian form:
\[ \Tr_{E\otimes_k A/A}( \lambda x \sigma(y) ) \]
has discriminant $1$ and is positive definite (respectively indefinite) at all the real places of $k$ where the octonion algebra is definite (respectively split).
Note that the form is positive definite if and only if $\lambda$ is totally positive and $E$ is totally real. Moreover, the discriminant of the form is $N_{E\otimes_k A/A}(\lambda)\delta_{E/k}$.

The torus $T_k$ associated to this data is precisely $T_{E\otimes_k A,\sigma,N}$ whose points over $R$ are: 
\[ T_{E\otimes_k A,\sigma,N}(R) = \{ x\in (E\otimes_k A \otimes_k R)^\times \mid x\sigma(x) = 1 \text{ and } N_{E\otimes_k A/A}(x) = 1 \} \]
where $\sigma$ is induced from the non-trivial automorphism of $A$.
\end{thm*}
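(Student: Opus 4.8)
The plan is to reduce the statement to Theorem~\ref{thm:SUclassification} together with the classical description of maximal tori of special unitary groups, and then reassemble the pieces. First I would show that every maximal torus $T_k\subset G_k$ lies in a \emph{unique} A2 subgroup: over $\overline{k}$ the roots of $T_{\overline{k}}$ in $G_{\overline{k}}$ form a system of type G2, whose long roots constitute the unique closed sub-root-system of type A2 and generate, together with $T_{\overline{k}}$, the long-root copy of $\SL_3$. Since $\Gal(\overline{k}/k)$ acts on $X^{*}(T_{\overline{k}})$ by automorphisms of the root datum, hence preserves root lengths, this $\SL_3$ descends to a $k$-subgroup $H_k\subset G_k$ containing $T_k$; and it is the only subgroup of the form $\Stab_{x}(G)$ with $x\perp 1_{\bO}$ that contains $T_k$, because any such subgroup is of type A2 with $T_k$ as a maximal torus, so it is generated by $T_k$ and the root subgroups for the roots of $T_k$ occurring in its Lie algebra, which must be the long ones. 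Thus $T_k\mapsto H_k$ is a $G_k(k)$-equivariant assignment, so by Theorem~\ref{thm:SUclassification} it produces the quadratic \'etale algebra $A\injects\bO$ of the triple and identifies $H_k$ with $\SU(V_A)$, where $V_A=A^{\perp}\subset\bO$ is a $3$-dimensional $A$-Hermitian space for the form induced by the octonion norm, with $\disc V_A=1$ (this last point by comparing the discriminants of the norm forms of $\bO$, of $A$ and of $V_A$, using that the octonion norm is a $3$-fold Pfister form). Moreover, by Corollary~\ref{cor:SUoverR}, $V_A$ is definite (resp.\ indefinite) at exactly the real places of $k$ where $\bO$ is definite (resp.\ split).

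Next I would invoke the standard analysis of maximal tori in $\SU_3$. Given a maximal torus $T_k\subset\SU(V_A)$, its commutant $C$ in $\End_{A}(V_A)$ is a cubic \'etale $A$-algebra, $V_A$ is free of rank one over $C$, and the adjoint involution restricts to an involution $\tau$ of $C$ with $\tau|_{A}=\sigma$; then $E:=C^{\tau}$ is cubic \'etale over $k$, $C=E\otimes_{k}A$, $\tau=\id_{E}\otimes\sigma$, and nondegeneracy of the trace form writes the Hermitian form on $V_A\cong C$ as $h_{\lambda}(x,y)=\Tr_{C/A}(\lambda\,x\,\tau(y))$ with $\lambda\in E^{\times}$ well defined modulo $N_{C/E}(C^{\times})$; the choice of the identification $C\cong E\otimes_{k}A$ introduces the further ambiguity $\lambda\mapsto\rho(\lambda)$ for $\rho\in\Aut_{k}(E)$, with $\rho\otimes\id$ an isometry between the corresponding forms. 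Conversely $(E,\lambda)$ with $h_{\lambda}\cong V_A$ gives back the torus $\{x\in C^{\times}\mid x\tau(x)=1,\ N_{C/A}(x)=1\}$, which is $T_{E\otimes_k A,\sigma,N}$, so maximal tori of $\SU(V_A)$ up to $\SU(V_A)(k)$-conjugacy correspond to such pairs $(E,\lambda)$ modulo these equivalences. To decide which $h_{\lambda}$ occur I would appeal to the Hasse principle for Hermitian forms over number fields: $h_{\lambda}\cong V_A$ iff they agree in rank ($=3$), discriminant, and signature at each real place of $k$ ramified in $A$. A direct computation gives $\disc h_{\lambda}=N_{C/A}(\lambda)\,\delta_{E/k}$, turning the discriminant condition into $N_{E\otimes_k A/A}(\lambda)\,\delta_{E/k}=1$; combined with Step~1 and Corollary~\ref{cor:SUoverR}, the signature condition becomes "positive definite where $\bO$ is definite, indefinite where $\bO$ is split", which, unwinding $h_{\lambda}=\Tr_{E\otimes_k A/A}(\lambda\,x\,\sigma(y))$, amounts in the definite case to $E$ totally real and $\lambda$ totally positive.

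Finally I would assemble: over a fixed $A$, the fiber of $T_k\mapsto H_k$ consists of the maximal tori of $H_k=\SU(V_A)$ up to $N_{G_k}(H_k)(k)$-conjugacy, and since $N_{G_k}(H_k)/H_k\cong\bZ/2$ one must control the effect on $(E,\lambda)$ of the extra, outer automorphism of $H_k$ coming from $G_k$. I expect this to be the main obstacle: one has to check that this automorphism fixes $E$ and moves $\lambda$ only within the equivalence already imposed (by $\Aut_k(E)$ and by norms from $C$), equivalently that $G_k(k)$-conjugacy of two maximal tori lying in a common A2 subgroup is already detected by conjugacy of their data inside that subgroup. A secondary nuisance is keeping the sign conventions straight in $\disc V_A=1$ and in $\disc h_{\lambda}=N_{E\otimes_k A/A}(\lambda)\,\delta_{E/k}$, and matching the real-place signature bookkeeping with Corollary~\ref{cor:SUoverR}. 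Granting this, the three steps above give the bijection with triples $(A,E,\lambda)$ and the identification of the associated torus as $T_{E\otimes_k A,\sigma,N}$.
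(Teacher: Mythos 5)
Your architecture is the same as the paper's: reduce to the unique A2 subgroup containing the torus, classify those subgroups by the quadratic \'etale algebra $A$ (Theorem \ref{thm:SUclassification}, Corollary \ref{cor:SUoverR}), classify maximal tori of the rank-$3$ special unitary group by pairs $(E,\lambda)$ with the Hermitian form $\Tr_{E\otimes_k A/A}(\lambda x\sigma(y))$ matched to $A_x^\perp$ via rank, discriminant and real signatures, and finally control the action of $N_{G_k}(H_k)(k)$. Two local differences: for uniqueness of the A2 subgroup you argue via the long-root subsystem being the unique closed A2 subsystem and Galois-invariance of root lengths, whereas the paper argues that the trivial eigenspace of $T_k$ on $\bO^0$ is $k$-rational and non-isotropic and that $T_k\injects\Stab_x(G_k)$; and your second step rederives by the commutant/Hasse-principle argument what the paper simply cites as Theorem \ref{thm:ToriInA2} (from \cite{FioriRoe} and \cite[Ex.~6.129]{WalkerThesisTori}). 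Both variants are fine.

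The genuine gap is exactly the point you flag and then ``grant'': that conjugation by the extra element of $N_{G_k}(H_k)(k)$ does not fuse distinct $H_k(k)$-conjugacy classes of tori, i.e.\ that $G_k(k)$-conjugacy of tori inside a common A2 subgroup is detected inside that subgroup. Without this, injectivity of the map from triples $(A,E,\lambda)$ to $G_k(k)$-classes is unproved, so the bijection is not established. The paper closes it quickly (Proposition \ref{prop:normalizer} and Theorem \ref{thm:ToriInG2}): the normalizer of $H_k$ in $G_k$ is generated by $H_k$ together with a single order-$2$ element $g$ (explicitly, $x\mapsto -x$, $y\mapsto y$, $z\mapsto z$), and conjugation by $g$ induces the outer automorphism of $H_k$, namely the composite of the $h$-adjoint and the inverse on the Hermitian space $A_x^\perp$. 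Since a maximal torus $T\simeq T_{E\otimes_k A,\sigma,N}$ sits inside the commutative algebra $C=E\otimes_k A$ acting on $A_x^\perp\simeq C$, the $h_\lambda$-adjoint of $u\in T$ is $\sigma(u)=u^{-1}$, so $T$ is self-adjoint and $gTg^{-1}=T$ (with $g$ acting as inversion). Hence every element of $N_{G_k}(H_k)(k)$ carries $T$ to an $H_k(k)$-conjugate of $T$, no extra identifications of the data $(E,\lambda)$ occur beyond the $N_{E\otimes_k A/E}$-norm and $\Aut_k(E)$ ambiguities already imposed, and your three steps then do assemble into the stated bijection. Adding this short argument would complete your proof.
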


\begin{prop*}[\ref{prop:ClassifyToriG2R}]
We have the following restrictions on the algebras $A$ and $E$ and the signature of $\lambda$ based on the structure of $\bO$ at each real place $\nu$.
The conditions can be summarized as follows:
\begin{itemize}
\item If $\bO$ is definite at $\nu$ then $A$ is a CM-algebra and $E$ is totally real. Moreover, $\lambda$ is positive at all the real places.
\item If $\bO$ is indefinite at $\nu$ then $A$ is arbitrary and $E$ is arbitrary. Furthermore, 
\subitem If $A$ is CM and $E$ is totally real then $\lambda$ is positive at a unique place.
\subitem If $A$ is CM and $E$ is not totally real then $\lambda$ is negative at the unique real place.
\subitem If $A$ is totally real then the choice of $\lambda$ at $\nu$ is irrelevant (since the norm map from $E\otimes_k A$ to $E$ is surjective at $\nu$).
\end{itemize}
\end{prop*}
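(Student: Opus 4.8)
The plan is to unpack, real place by real place, the two conditions furnished by Theorem~\ref{thm:ClassifyToriG2}: that $A$ embeds in $\bO$, and that the rank‑$3$ $A$‑Hermitian space $\bigl(E\otimes_k A,\ \Tr_{E\otimes_k A/A}(\lambda x\sigma(y))\bigr)$ has discriminant $1$ and is positive definite (resp.\ indefinite) at the real places where $\bO$ is definite (resp.\ split). Fix a real place $\nu$ of $k$ and set $A_\nu=A\otimes_k k_\nu$, $E_\nu=E\otimes_k k_\nu$. First I would record the two real octonion algebras — the division algebra, where $G_{k_\nu}$ is compact ($\bO$ definite at $\nu$), and the split algebra, where $G_{k_\nu}$ is split of type $G_2$ ($\bO$ indefinite at $\nu$) — together with their quadratic \'etale subalgebras: the division octonions contain only $\mathbb{C}$, while the split octonions contain both $\mathbb{C}$ and $\mathbb{R}\times\mathbb{R}$. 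In either subcase the local form decomposes according to the decomposition of $E_\nu\otimes_\mathbb{R}A_\nu$ into simple factors, and the key local input is that a complex place of $E$ together with $A_\nu\cong\mathbb{C}$ contributes a rank‑$2$ summand on which $\sigma$ interchanges the two $\mathbb{C}$‑factors, i.e.\ a hyperbolic Hermitian line of discriminant $-1$ and signature $(1,1)$.

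When $\bO$ is definite at $\nu$: since $A\injects\bO$ and $\bO\otimes_k k_\nu$ has no zero divisors, $A_\nu$ is a field, so $A_\nu\cong\mathbb{C}$ (thus $\nu$ ramifies in $A$; if this holds at every real place, $A$ is CM). A complex place of $E$ would contribute a hyperbolic line, incompatible with positive‑definiteness, so $E_\nu\cong\mathbb{R}^3$, i.e.\ $E$ is totally real above $\nu$. Then $E_\nu\otimes_\mathbb{R}\mathbb{C}\cong\mathbb{C}^3$ and the form is diagonalised with entries $2\,\tau_i(\lambda)$ over the three real embeddings $\tau_1,\tau_2,\tau_3$ of $E$ above $\nu$; it is positive definite exactly when $\tau_i(\lambda)>0$ for all $i$, which is the asserted positivity (and reproduces the global criterion in the last sentence of Theorem~\ref{thm:ClassifyToriG2} when $\bO$ is everywhere definite).

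When $\bO$ is indefinite at $\nu$ there is no constraint on $A_\nu$ or $E_\nu$, and I would normalise the discriminant of a rank‑$n$ Hermitian $\mathbb{C}$‑form of signature $(p,q)$ to be $(-1)^q\in\mathbb{R}^\times/N_{\mathbb{C}/\mathbb{R}}(\mathbb{C}^\times)=\{\pm1\}$, so ``discriminant $1$'' means $q$ is even. Three subcases then arise. If $A_\nu\cong\mathbb{C}$ and $E_\nu\cong\mathbb{R}^3$, the form is diagonalised by $\tau_1(\lambda),\tau_2(\lambda),\tau_3(\lambda)$; discriminant $1$ leaves only signature $(3,0)$ or $(1,2)$, and indefiniteness — which is forced because the embedded special unitary group $H_{k_\nu}$ is indefinite here (Corollary~\ref{cor:SUoverR}), equivalently because any compact subgroup of split $G_2(\mathbb{R})$ lies in a maximal compact $\SO_4$, which contains no $\SU_3$ — selects $(1,2)$, i.e.\ $\lambda$ is positive at exactly one real place of $E$ above $\nu$. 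If $A_\nu\cong\mathbb{C}$ and $E_\nu\cong\mathbb{R}\times\mathbb{C}$, the form is the rank‑$1$ form of sign $\operatorname{sgn}\tau(\lambda)$ (for the real embedding $\tau$) orthogonal to a hyperbolic line, with discriminant $-\operatorname{sgn}\tau(\lambda)$; discriminant $1$ forces $\tau(\lambda)<0$, and the signature is then automatically $(1,2)$. If $A_\nu\cong\mathbb{R}\times\mathbb{R}$, then $E_\nu\otimes_\mathbb{R}A_\nu\cong F\times F$ with $F=E_\nu$ and $\sigma$ the swap, the associated special unitary group is the split $\SL_3(\mathbb{R})$, which embeds (Corollary~\ref{cor:SUoverR}); the norm $N_{E\otimes_kA/E}$ localises to the surjection $(f_1,f_2)\mapsto f_1f_2$ on $F\times F$, so the class of $\lambda$ at $\nu$ is trivial, and a direct computation identifies $T_{E\otimes_kA,\sigma,N}$ at $\nu$ with $\{f\in F^\times:\,N_{F/\mathbb{R}}(f)=1\}$, which is $\mathbb{G}_m^2$ if $F\cong\mathbb{R}^3$ and $\Res_{\mathbb{C}/\mathbb{R}}\mathbb{G}_m$ if $F\cong\mathbb{R}\times\mathbb{C}$ — indefinite in either case, with the discriminant condition vacuous at $\nu$. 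Assembling the three subcases gives the stated list.

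The main obstacle is the signature/discriminant bookkeeping in the split case: correctly recognising the rank‑$2$ contribution of a complex place of $E$ as a hyperbolic Hermitian line, and fixing the discriminant normalisation so that ``discriminant $1$'' together with ``indefinite'' singles out precisely signature $(1,2)$. A secondary, routine but necessary, point is the verification that a positive‑definite rank‑$3$ Hermitian form cannot occur at a split place, for which the cleanest route is the maximal‑compact‑subgroup argument above, or equivalently an appeal to Corollary~\ref{cor:SUoverR}.
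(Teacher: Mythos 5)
Your proposal is correct and follows exactly the route the paper intends: the paper disposes of this proposition with the single remark that the conditions are ``immediate from the structure of the trace form $\Tr_{E\otimes_k A/A}(\lambda x\sigma(y))$,'' and your place-by-place signature and discriminant computation (including the hyperbolic contribution of a complex place of $E$ and the surjectivity of the norm when $A_\nu$ splits) is precisely the verification being left to the reader, consistent with the discriminant formula $N_{E\otimes_k A/A}(\lambda)\delta_{E/k}$ and Remark \ref{rem:renormalize}.
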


\begin{cor*}[\ref{cor:ClassifyToriG2R}]
The $G_k(k)$-conjugacy classes of maximal tori $T_k$ in $G_k$ are in bijection with triples $(A,E,\lambda)$
where $A$ and $E$ are respectively quadratic and cubic extensions of $k$, the element $\lambda$ is in the kernel of the map:
\[ (E^\times)/N_{E \otimes_k A/E}((E \otimes_k A)^\times)/\Aut_k(E) \overset{N_{E/k}}\longrightarrow (k^\times)/N_{A/k}(A^\times) \]
and such that the triple $(A,E,\lambda\delta_{E/k})$ satisfies the conditions of Proposition \ref{prop:ClassifyToriG2R}.
\end{cor*}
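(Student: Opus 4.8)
The plan is to obtain this corollary as a direct reformulation of Theorem~\ref{thm:ClassifyToriG2} and Proposition~\ref{prop:ClassifyToriG2R} via the substitution $\lambda\mapsto\lambda\,\delta_{E/k}$, whose sole purpose is to turn the condition ``the Hermitian form has discriminant $1$'' into the condition that a class in the relevant norm quotient lies in the kernel of a norm map. By Theorem~\ref{thm:ClassifyToriG2} the discriminant of the form attached to $(A,E,\lambda)$ equals $N_{E\otimes_k A/A}(\lambda)\,\delta_{E/k}$ in $k^\times/N_{A/k}(A^\times)$. For $\lambda\in E^\times\subseteq(E\otimes_k A)^\times$ one has $N_{E\otimes_k A/A}(\lambda)=N_{E/k}(\lambda)$, viewed inside $k^\times\subseteq A^\times$ --- multiplication by $\lambda\otimes 1$ on the free rank-$3$ $A$-module $E\otimes_k A$ has $A$-determinant equal to the $k$-determinant of multiplication by $\lambda$ on $E$ --- so ``discriminant $1$'' means exactly $N_{E/k}(\lambda)\,\delta_{E/k}\in N_{A/k}(A^\times)$.

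Next I would verify that $N_{E/k}$ induces a well-defined map
\[ (E^\times)/N_{E\otimes_k A/E}((E\otimes_k A)^\times)\Aut_k(E)\ \overset{N_{E/k}}{\longrightarrow}\ (k^\times)/N_{A/k}(A^\times), \]
because transitivity of the norm gives $N_{E/k}\circ N_{E\otimes_k A/E}=N_{A/k}\circ N_{E\otimes_k A/A}$, which lands in $N_{A/k}(A^\times)$, and $N_{E/k}$ is $\Aut_k(E)$-invariant. Setting $\mu=\lambda\,\delta_{E/k}$ (viewing $\delta_{E/k}\in k^\times\subseteq E^\times$) one computes $N_{E/k}(\mu)=N_{E/k}(\lambda)\,\delta_{E/k}^3\equiv N_{E/k}(\lambda)\,\delta_{E/k}\pmod{N_{A/k}(A^\times)}$ since $\delta_{E/k}^2=N_{A/k}(\delta_{E/k})$; hence the discriminant condition on $\lambda$ is equivalent to $\mu$ lying in the kernel of the displayed map. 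Moreover $\lambda\mapsto\mu$ is a bijection of this quotient onto itself: multiplication by $\delta_{E/k}$ is invertible, commutes with the $\Aut_k(E)$-action because $\delta_{E/k}\in k^\times$, and is independent of the chosen square-class representative, since for $c\in k^\times$ the element $c^2=N_{E\otimes_k A/E}(c\otimes 1)$ already lies in the norm subgroup.

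Finally I would transport the archimedean hypotheses. The positivity/indefiniteness requirement of Theorem~\ref{thm:ClassifyToriG2} depends, by Proposition~\ref{prop:ClassifyToriG2R}, only on $A$, $E$ and the signs of $\lambda$ at the real places of $k$; since $\mu\,\delta_{E/k}=\lambda\,\delta_{E/k}^2$ has the same sign as $\lambda$ at every real place, this requirement is precisely that the triple $(A,E,\mu\,\delta_{E/k})$ satisfy the conditions of Proposition~\ref{prop:ClassifyToriG2R}. Renaming $\mu$ as $\lambda$ then reproduces the asserted bijection (the split cases, e.g.\ $A\cong k\times k$, are handled identically, with $N_{A/k}(A^\times)=k^\times$). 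I do not expect a genuine obstacle here; the only point requiring care is keeping straight the norm maps $N_{E/k}$, $N_{E\otimes_k A/E}$, $N_{E\otimes_k A/A}$, $N_{A/k}$ and their transitivity relations, everything else being formal once Theorem~\ref{thm:ClassifyToriG2} is available.
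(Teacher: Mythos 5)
Your proposal is correct and follows essentially the same route as the paper: the paper obtains the corollary from Theorem \ref{thm:ClassifyToriG2} and Proposition \ref{prop:ClassifyToriG2R} by the renormalization $\lambda\mapsto\delta_{E/k}\lambda$ described in Remark \ref{rem:renormalize}, using the discriminant formula $N_{E/k}(\lambda)\delta_{E/k}$ exactly as you do. Your write-up merely makes explicit the details the paper leaves implicit (well-definedness of the induced norm map, $N_{E\otimes_k A/A}(\lambda)=N_{E/k}(\lambda)$ for $\lambda\in E^\times$, $\delta_{E/k}^2\in N_{A/k}(A^\times)$, and the invariance of the sign conditions under multiplication by $\delta_{E/k}^2$), all of which check out.
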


\section{The Octonions}
\label{sec:octonions}

The purpose of this section is to introduce octonion algebras and some basic facts we shall need.
Most, if not all, of these basic facts are likely known to experts. Proofs or sketches of proofs are included for the benefit of non-experts.

{\def\XMetaCompile{1}
\def\section{\subsection}

\section{The Cayley-Dickson Construction}

For the purpose of this section we will be considering algebras over a fixed field $k$ of characteristic not equal to $2$. Much of what follows can be done over a more general ring, however, modifications should be made to deal with the characteristic $2$ case (see \cite[Sec. 33.C]{book_of_involutions}). It should be noted before we begin, that the algebras being considered are not necessarily commutative or even associative, though $k$, being a field, is both.

\begin{df}
Let $A$ be a $k$-algebra with involution $\sigma_A$ such that $N_A(x) = x\sigma(x)$ is a non-degenerate $k$-valued quadratic form on $A$. Fix $\delta\in k^\times$.
Consider the $k$-module $B=A\oplus A$. This can be given a $k$-algebra structure by defining:
\[ [x_1,x_2]\cdot[y_1,y_2] = [x_1y_1+\delta \sigma_A(y_2)x_2,y_2x_1+x_2\sigma_A(y_1)]. \]
We shall refer to this construction as the \emphh{Cayley-Dickson construction} and the algebra $B$ as a \emphh{CD-algebra}.

Note that there is a natural inclusion $A\injects B$ via $x\mapsto[x,0]$.
Moreover, $B$ comes equipped with a standard involution $\sigma_B$ defined as:
\[ \sigma_B([x_1,x_2]) = [\sigma_A(x_1),-x_2]. \]
The involution $\sigma_B$ restricts to $\sigma_A$ under the natural inclusion.

Every CD-algebra $B$ is equipped with a $k$-valued quadratic form given by:
\[  N_B(x) = x\sigma(x). \]
This is also called the \emphh{reduced norm} of $B$.

The associated bilinear pairing is $(x,y) \mapsto \tfrac{1}{2}\left(x\sigma(y) + y\sigma(x)\right)$.
Two elements $x,y\in B$ are said to be \emphh{orthogonal} or \emphh{perpendicular} if $(x,y) = 0$.
\end{df}

\begin{prop}
Consider the CD-algebra $B$ constructed from $A$ and $\delta$.
Every non-zero element of $B$ has a multiplicative inverse if and only if every element of $A$ has a multiplicative inverse and there is no $x \in A$ such that $\delta = x\sigma_A(x)$.
\end{prop}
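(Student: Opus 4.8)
The plan is to translate the statement about invertibility into one about the norm form $N_B$, then to recognize $N_B$ as an orthogonal sum of two rescaled copies of $N_A$ and argue with quadratic forms. First I would compute $N_B$ explicitly: for $x=[x_1,x_2]$ we have $\sigma_B(x)=[\sigma_A(x_1),-x_2]$, and inserting this into the multiplication rule the second coordinate of $x\sigma_B(x)$ cancels while the first collapses to $N_A(x_1)-\delta N_A(x_2)$ (using $\sigma_A(x_2)x_2=x_2\sigma_A(x_2)=N_A(x_2)\in k$); the reversed product $\sigma_B(x)x$ gives the same answer. Hence $N_B(x)=N_A(x_1)-\delta N_A(x_2)\in k\cdot 1_B$, i.e.\ $N_B\simeq N_A\perp\langle-\delta\rangle N_A$ as a quadratic space, and $x\sigma_B(x)=\sigma_B(x)x=N_B(x)$.

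Next I would record the norm criterion for invertibility: in $A$ or in $B$, a nonzero element $z$ is invertible if and only if $N(z)\neq0$. If $N(z)\neq0$ then $N(z)^{-1}\sigma(z)$ is a two-sided inverse, immediately from $z\sigma(z)=\sigma(z)z=N(z)$. If $N(z)=0$ with $z\neq0$ then $z\sigma(z)=0$ while $\sigma(z)\neq0$ (as $\sigma$ is bijective), so $z$ is a zero divisor; since $A$ is associative and $B$ is at worst alternative (the Cayley--Dickson construction applied to an associative algebra yields an alternative algebra), such a zero divisor has no inverse. Consequently the condition that every nonzero element of $A$, resp.\ of $B$, be invertible is equivalent to $N_A$, resp.\ $N_B$, being anisotropic, and the proposition reduces to the purely quadratic-form statement: $N_A\perp\langle-\delta\rangle N_A$ is anisotropic if and only if $N_A$ is anisotropic and $\delta\notin\{\,x\sigma_A(x):x\in A\,\}$.

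Finally I would prove this equivalence. If $N_A$ is isotropic, say $N_A(a)=0$ with $a\neq0$, then $[a,0]$ is isotropic for $N_B$; and if $\delta=x\sigma_A(x)=N_A(x)$ then $[x,1]$ is isotropic, since $N_A(x)-\delta N_A(1)=\delta-\delta=0$. Conversely, assume $N_A$ anisotropic and $N_B(x_1,x_2)=0$ with $(x_1,x_2)\neq(0,0)$: then $x_2\neq0$ (otherwise $N_A(x_1)=0$ would force $x_1=0$), so $x_2$ is invertible in $A$, and a short direct computation — using that $A$ is associative and $\sigma_A$ an anti-automorphism, together with $\sigma_A(x_2)x_2=N_A(x_2)\in k$ — shows $N_A(x_1x_2^{-1})=N_A(x_1)N_A(x_2)^{-1}=\delta$, exhibiting $\delta$ as a value of $N_A$.

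I expect the only delicate point to be precisely where associativity of $A$ is used: both for this last computation (equivalently, for the multiplicativity of $N_A$) and, in $B$, for the fact that zero divisors are not invertible. Associativity is genuinely necessary — were $A$ allowed to be an octonion algebra, $B$ would be $16$-dimensional and hence never a division algebra, while the right-hand condition could still hold — but it costs nothing here, since the algebras $A$ arising in the sequel ($k$, quadratic \'etale, quaternion) are all associative; equivalently, one uses that the Cayley--Dickson construction on an associative composition algebra again produces a composition algebra, whose norm is multiplicative.
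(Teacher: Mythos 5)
Your proposal is correct and follows essentially the same route as the paper's own proof: identify invertibility of all nonzero elements with anisotropy of the norm form, use $\sigma(x)/N_B(x)$ as the explicit inverse, and produce zero divisors (isotropic vectors) when the conditions on $A$ and $\delta$ fail. You simply fill in the details the paper leaves implicit — the computation $N_B \simeq N_A \perp \langle-\delta\rangle N_A$, multiplicativity of $N_A$ via associativity, and the remark on where associativity is actually needed — which is consistent with, not divergent from, the paper's argument.
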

\begin{proof}
The conditions that $A$ has multiplicative inverses and that $x\sigma_A(x)$ does not represent delta is equivalent to the condition that $N_B(x)$ does not represent $0$. In this case $\frac{\sigma(x)}{N_B(x)}$ will be a multiplicative inverse of a non-zero $x$.
If the norm form did represent zero then the algebra would have non-trivial zero divisors.
\end{proof}

\begin{prop}
The following properties of $A$ imply properties of $B$:
\begin{enumerate}
\item The algebra $B$ is commutative if and only if $A=k$.
\item The algebra $B$ is associative if and only if $A$ is commutative.
\item The algebra $B$ is alternative (i.e. $x(xy) = (x^2)y$ and $(yx)x = y(x^2)$) if and only if $A$ is associative.
\end{enumerate}
\end{prop}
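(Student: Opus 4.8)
The plan is to convert each of the three ``if and only if'' statements into an assertion about the vanishing of commutators, the vanishing of associators, or the alternating-ness of the associator, and then to test these on a convenient spanning set of $B$. Write $j=[0,1]\in B$ and identify $A$ with $A\oplus 0\subseteq B$. Directly from the multiplication formula one gets $j^{2}=[\delta,0]=\delta$ and $ja=[0,\sigma_A(a)]=\sigma_A(a)j$ for $a\in A$, while $aj=[0,a]$; hence every element of $B$ is uniquely $x_{1}+x_{2}j$ with $x_{i}\in A$, and the whole product of $B$ is determined by these relations together with the product and involution of $A$. I will use freely that $\sigma_A$ is an anti-automorphism and that $T_A(x):=x+\sigma_A(x)$ and $N_A(x)=x\sigma_A(x)$ lie in $k$ (the second by hypothesis, the first by linearising the quadratic form $N_A$), so in particular they are central.

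For the ``only if'' directions I would substitute $j$ into the defining identities. Since $A\hookrightarrow B$ is a subalgebra, $B$ commutative (resp.\ associative, resp.\ alternative) already forces $A$ commutative (resp.\ commutative, resp.\ alternative); the point is to squeeze out more. If $B$ is commutative then $aj=ja=\sigma_A(a)j$ gives $\sigma_A=\id$, so $N_A(x)=x^{2}$ has associated bilinear form $(x,y)\mapsto xy$, which is $k$-valued; taking $x=1$ shows $A=1\cdot A\subseteq k$, i.e.\ $A=k$. If $B$ is associative, a one-line computation gives $[a,b,j]_{B}=(ab-ba)j$ for $a,b\in A$, and since $j$ is not a zero divisor ($j^{2}=\delta\in k^{\times}$) this forces $A$ commutative. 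If $B$ is alternative, I would apply the left alternative law $x^{2}y=x(xy)$ to the ``fully mixed'' pair $x=a+bj$, $y=cj$ with $a,b,c\in A$: one computes $x^{2}=[\,a^{2}+\delta\sigma_A(b)b,\ bT_A(a)\,]$, and then comparing the $A$-components of $x^{2}y$ and of $x(xy)$ the term $\delta\,a(\sigma_A(c)b)$ occurring on both sides cancels (here one uses $T_A(a)\in k$), leaving exactly $\sigma_A(a)\bigl(\sigma_A(c)b\bigr)=\bigl(\sigma_A(a)\sigma_A(c)\bigr)b$ for all $a,b,c$; since $\sigma_A$ is a bijection this is precisely associativity of $A$.

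For the ``if'' directions the work is a direct expansion of the relevant identity using the multiplication formula and the assumed property of $A$ (together with associativity of $A$, which in all cases at hand --- $A=k$, a quadratic \'etale algebra, or a quaternion algebra --- is automatic): if $A=k$ then $\sigma_A=\id$ and the formula is manifestly symmetric, so $B$ is commutative; if $A$ is commutative and associative one checks $\bigl((x_{1},x_{2})(y_{1},y_{2})\bigr)(z_{1},z_{2})=(x_{1},x_{2})\bigl((y_{1},y_{2})(z_{1},z_{2})\bigr)$ componentwise; and if $A$ is associative one checks $x^{2}y=x(xy)$ componentwise (the companion identity $yx^{2}=(yx)x$ then following symmetrically, e.g.\ by applying the anti-automorphism $\sigma_B$). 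I expect the only genuinely delicate point to be the ``only if'' half of (3): the associators one can build purely from $A$ and $j$ turn out to satisfy only the alternating law and yield vacuous, ``trace-type'' constraints (e.g.\ $[a,b,j]_{B}=-[a,j,b]_{B}$ just recovers $b+\sigma_A(b)\in k$), so one is forced to test with an element having both components nonzero and to notice the cancellation that isolates the associativity identity for $A$. Everything else is bookkeeping with the Cayley--Dickson formula.
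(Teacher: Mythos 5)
Your argument is correct, but it follows a genuinely different route from the paper's. The paper's sketch fixes an orthogonal basis of $A$ containing $1$, doubles it to a basis of $B$, and reduces each identity to sign rules on basis vectors ($\sigma(x)=-x$ for $x\perp 1$, $xy=-yx$, $(xy)z=-x(yz)$), with the extra step of testing sums of two basis vectors for the alternative law, since that law is quadratic rather than multilinear in $x$. You instead work basis-free with the decomposition $B=A\oplus Aj$, $j^2=\delta$, and verify or refute each identity componentwise on general elements; for the necessity directions you extract exactly the needed identity of $A$ by judicious substitutions (e.g.\ $x=a+bj$, $y=cj$ isolates $\sigma_A(a)(\sigma_A(c)b)=(\sigma_A(a)\sigma_A(c))b$, i.e.\ associativity of $A$). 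Your computations check out: $ja=\sigma_A(a)j$ does force $\sigma_A=\id$ and then $A=k$ via the $k$-valued polarization of $N_A$; the associator $[a,b,j]=(ab-ba)j$ does force commutativity of $A$; and the cancellation of $\delta\,a(\sigma_A(c)b)$ in the mixed left-alternative identity works as you say, using only $T_A(a)\in k$. What your approach buys is completeness and explicitness in the ``only if'' directions (which the paper's sketch treats only implicitly, in its items 6--7) and it sidesteps the linearization subtlety for alternativity altogether, since you never restrict to basis vectors; what the paper's approach buys is brevity and the sign relations it reuses later in the octonion analysis. You were also right to flag that the sufficiency half of (2) needs associativity of $A$ in addition to commutativity (a commutative algebra with scalar involution and nondegenerate norm need not be associative); the paper's statement and sketch elide this point, and your observation that $A$ is associative in every case arising here ($k$, quadratic \'etale, quaternion) is the intended reading.
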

See \cite[Lem. 33.16]{book_of_involutions}.
\begin{proof}[Sketch of proof]
To prove each claim one can reduced it to a simpler check involving only a basis of $B = A \oplus A$. We shall fix an orthogonal basis for $A$ which includes $1$, and use the same basis on both factors. The relations one needs to check are always obvious if one of the elements used is $1$, or all of them are in $A$.

One can readily check the following:
\begin{enumerate}
\item If $x$ is perpendicular to $1$ then $\sigma(x) = -x$.
\item If $1,x,y$ are perpendicular to each other, then $xy=-yx$.
\item If $x,y,z$ are mutually perpendicular basis vectors from the second $A$ factor then $(xy)z = -x(yz)$.
\item By the distributivity of products over sums, checking commutativity and associativity can be done on a basis.
\item To check alternativity one must additionally handle the case where $x$ is the sum of two basis vectors one of which is in $A$.
\item If $x,y$ are perpendicular basis vectors chosen as above, then checking $x(xy) = (xx)y$ involves only alternativity in $A$.
\item For $x$ the sum of two perpendicular basis vectors, one of which is in $A$, checking $x(xy) = (xx)y$ requires associativity of $A$.
\end{enumerate}
Using the above we may deduce the results of the proposition.
\end{proof}

\begin{nota}
Given a CD-algebra $B$, and an element $x\in B$  with $N_B(x)\in k^\times$ and which is orthogonal to $1$ we shall denote by $A_x = k\langle x\rangle$ the $k$-subalgebra generated by $x$.

More generally, given two elements $x,y\in B$ with norms in $k^\times$ each of which is orthogonal to the algebra $A_{i}$ generated by the other, we shall denote by $A_{x,y} = k\langle x,y\rangle$ the $k$-subalgebra generated by $x$ and $y$.

Finally, given three elements $x,y,z \in B$ each of which has norm in $k^\times$ and is orthogonal to the algebra $A_{i,j}$ generated by the other two, we shall denote by $A_{x,y,z} = k\langle x,y,z\rangle$ the $k$-subalgebra generated by $x$, $y$ and $z$.
\end{nota}

The following propositions give some motivation to the choice of letter $A$ to denote these algebras, in particular these shall all be CD-subalgebras of $B$ which can be used to inductively define $B$ as a CD-algebra.

\begin{prop}
Let $B$ be a CD-algebra of rank at most $8$ over $k$, and let $x\in B$ be orthogonal to $1\in B$ with $N_B(x)\in k^\times$.
Then the subalgebra $A_x$ generated by $x$ is a quadratic algebra over $k$ with $\sigma_B$ the involution on $B$ restricting to the non-trivial involution of $A_x$.
\end{prop}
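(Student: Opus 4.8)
The plan is to make everything explicit using the standard involution $\sigma_B$. Since $x$ is orthogonal to $1_B$, the first of the elementary facts recorded above gives $\sigma_B(x) = -x$. Combining this with the definition of the reduced norm, $N_B(x) = x\sigma_B(x)$, yields
\[ x^2 = -x\sigma_B(x) = -N_B(x)\cdot 1_B, \]
so $x$ satisfies the monic quadratic $t^2 + N_B(x)\in k[t]$, and by hypothesis $N_B(x)\in k^\times$.

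Next I would identify $A_x$ as a $k$-module. Because $x^2$ is a scalar and $k\cdot 1_B$ is central in any $k$-algebra, every iterated product of copies of $x$ collapses: any parenthesized product of at least two factors contains the subproduct $x\cdot x = x^2\in k\cdot 1_B$, which can be pulled out to reduce the number of factors by two, so by induction every monomial in $x$ is a $k$-multiple of $1_B$ or of $x$. Hence $A_x = k\langle x\rangle = k\cdot 1_B + k\cdot x$; in particular $A_x$ is commutative and associative. It has rank exactly $2$: if $x$ were a scalar multiple $c\cdot 1_B$, then orthogonality to $1_B$ would force $(x,1_B)=c=0$, hence $x=0$, contradicting $N_B(x)\in k^\times$. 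Thus $1_B,x$ are $k$-linearly independent and $A_x\cong k[t]/(t^2+N_B(x))$, a quadratic algebra; since $\car k\neq 2$ and $N_B(x)\in k^\times$, the polynomial $t^2+N_B(x)$ is separable, so $A_x$ is moreover \'etale, with $N_B$ restricting to its (nondegenerate) norm form $a+bx\mapsto (a+bx)(a-bx)=a^2+N_B(x)b^2$.

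Finally, $\sigma_B$ fixes $1_B$ and sends $x$ to $-x$, so it maps $A_x$ into itself; restricted to the commutative algebra $A_x$ this anti-automorphic involution is an honest algebra involution of order two, and it is nontrivial because $-x\neq x$ (as $x\neq 0$ and $\car k\neq 2$). A quadratic algebra has a unique nontrivial involution, so $\sigma_B|_{A_x}$ is precisely the standard involution of $A_x$, which completes the proof.

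I do not expect a genuine obstacle here; the only points needing care are checking that $A_x$ is really $2$-dimensional rather than collapsing to $k$, and tracking where the hypotheses enter ($N_B(x)\in k^\times$ makes $t^2+N_B(x)$ nonzero and separable together with $\car k\neq 2$, giving \'etaleness and the nontriviality of $\sigma_B|_{A_x}$). The hypothesis $\rk B\leq 8$ is not logically required for this statement; it is in force because $B$ is then alternative and this lemma is the first step of the inductive description of $B$ via iterated Cayley--Dickson constructions that follows.
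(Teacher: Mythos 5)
Your proof is correct, and it is a bit more self-contained than the one in the paper. The paper's sketch gets commutativity and associativity of $k\langle x\rangle$ by citing alternativity of $B$ (which is where the rank-at-most-$8$ hypothesis enters, via the earlier proposition that $B$ is alternative when $A$ is associative), and then records the same two identities you use, $\sigma_B(x)=-x$ and $x^2=-N_B(x)\in k$. You instead bypass alternativity entirely: the explicit collapse argument showing $k\langle x\rangle = k\cdot 1_B + k\cdot x$ uses only $k$-bilinearity of the multiplication, the unit, and the fact that $x^2$ is a scalar, which is why you can correctly observe that the rank restriction is not logically needed for this particular statement. You also supply details the paper leaves implicit in its sketch: that $1_B,x$ are independent (so $A_x$ does not collapse to $k$), that $A_x\cong k[t]/(t^2+N_B(x))$ is \'etale since $\operatorname{char}k\neq 2$ and $N_B(x)\in k^\times$, and that $\sigma_B|_{A_x}$, being a nontrivial automorphism of a quadratic \'etale algebra, must be \emph{the} nontrivial involution. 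The trade-off is that the paper's route fits the inductive framework it is building (Artin-type consequences of alternativity are reused for the rank $4$ and rank $8$ subalgebra statements), whereas your argument is more elementary and makes the precise role of each hypothesis transparent.
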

Follows from \cite[Thm. 33.17]{book_of_involutions}.
\begin{proof}[Sketch of Proof]
Since $B$ is alternative we know that $k\langle x \rangle$ is associative and commutative.
The condition that $(x,1_B) = 0$ implies that $\sigma(x) = -x$. It follows that:
 \[ x^2 = -N_B(x) \in k. \]
\end{proof}
The above argument shows in fact that all rank $2$ subalgebras are commutative.

\begin{prop}
Let $B$ be a CD-algebra of rank $4$ or $8$ over $k$, and let $x,y\in  B$ have norms in $k^\times$. Suppose that $x$ is orthogonal to $1$ and $y$ is orthogonal to the subalgebra $A_x$.
Then the subalgebra $A_{x,y}$ generated by $x$ and $y$ is a quaternion algebra. Moreover, $\sigma_B$, the involution on $B$, restricts to the standard involution of $A_{x,y}$.
The algebra $A_{x,y}$ is obtained via the Cayley-Dickson construction with $A_x$ and $\delta=-N_B(y)$.
\end{prop}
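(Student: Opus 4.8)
The plan is to produce the explicit basis $1,x,y,xy$ of $A_{x,y}$, read off its multiplication table, and match it term-by-term with the Cayley--Dickson table built from $A_x$ and $\delta=-N_B(y)$. First I would collect what is already available. Since $B$ has rank $4$ or $8$ it is alternative (in the rank $4$ case even associative), so by Artin's theorem the subalgebra $k\langle x,y\rangle$ generated by $x$ and $y$ is associative. By the preceding propositions, $x\perp 1$ gives $\sigma_B(x)=-x$ and $x^2=-N_B(x)\in k^\times$; likewise, as $y\perp A_x\ni 1$ we get $\sigma_B(y)=-y$ and $y^2=-N_B(y)\in k^\times$, and we set $\delta=y^2$. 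Because $x$, $y$ and $x+y$ are all orthogonal to $1$ and $x\perp y$, expanding $(x+y)^2=-N_B(x+y)=-N_B(x)-N_B(y)=x^2+y^2$ forces $xy=-yx$; writing $a=\alpha\cdot 1+\beta x\in A_x$, this upgrades (by $k$-linearity alone) to the ``moving past $y$'' rules $ay=y\,\sigma_{A_x}(a)$ and $ya=\sigma_{A_x}(a)\,y$.

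Next I would check that $1,x,y,xy$ are pairwise orthogonal with norms in $k^\times$. Orthogonality of $xy$ to $1$, $x$ and $y$ is a one-line computation with the pairing $(u,v)=\tfrac12(u\sigma(v)+v\sigma(u))$ using $\sigma_B(xy)=\sigma_B(y)\sigma_B(x)=yx$, the relation $xy=-yx$, and associativity of $k\langle x,y\rangle$; and $N_B(xy)=(xy)(yx)=x\,y^2\,x=y^2x^2=N_B(x)N_B(y)\in k^\times$. Since $(e,e)=N_B(e)$ for every $e$, pairwise orthogonality together with nonvanishing of the norms forces $1,x,y,xy$ to be linearly independent, hence to span a $4$-dimensional subspace $V$. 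Using the moving-past rules and $x^2,y^2\in k$, every product of two of these four vectors lies again in $V$ (for instance $(xy)x=x(yx)=-x^2y=N_B(x)\,y$ and $(xy)(xy)=x(yx)y=-x^2y^2\in k$), so $V$ is a subalgebra and therefore $V=A_{x,y}$.

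Finally I would set up the Cayley--Dickson isomorphism $\phi\colon A_x\oplus A_x\to A_{x,y}$, $\phi([a_1,a_2])=a_1+a_2y$, which is a $k$-linear bijection by the previous step. Expanding $(a_1+a_2y)(b_1+b_2y)$ inside the associative algebra $A_{x,y}$, using the moving-past rules, $y^2=\delta$, and commutativity of $A_x$, gives
\[ (a_1+a_2y)(b_1+b_2y)=\bigl(a_1b_1+\delta\,\sigma_{A_x}(b_2)a_2\bigr)+\bigl(a_1b_2+a_2\,\sigma_{A_x}(b_1)\bigr)y, \]
which is exactly the Cayley--Dickson product of $[a_1,a_2]$ and $[b_1,b_2]$ formed from $(A_x,\sigma_{A_x})$ and $\delta=-N_B(y)$; hence $A_{x,y}$ is that CD-algebra, and since $A_x$ is a quadratic algebra with non-degenerate norm form (because $N_B(x)\in k^\times$) and $\delta\in k^\times$, it is by construction a quaternion algebra. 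The same bookkeeping applied to $\sigma_B(a_1+a_2y)=\sigma_B(a_1)+\sigma_B(y)\sigma_B(a_2)=\sigma_{A_x}(a_1)-a_2y$ identifies $\sigma_B|_{A_{x,y}}$ with the standard involution of the CD-algebra. The one point requiring care is to keep every multiplication inside the associative subalgebra $k\langle x,y\rangle$ furnished by Artin's theorem, since $B$ itself need not be associative; granting that, the remaining verifications are routine.
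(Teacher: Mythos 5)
Your argument is correct and takes essentially the same route as the paper's sketch: from the orthogonality hypotheses deduce $\sigma_B(x)=-x$, $\sigma_B(y)=-y$, $x^2,y^2\in k^\times$ and $xy=-yx$, then use alternativity of $B$ (via Artin's theorem) to conclude $k\langle x,y\rangle$ is associative and hence a quaternion algebra. You additionally spell out the basis $1,x,y,xy$, the explicit Cayley--Dickson isomorphism with $\delta=-N_B(y)$, and the identification of $\sigma_B$ with the standard involution, details the paper's sketch leaves implicit but which follow by exactly the bookkeeping you describe.
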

Follows from \cite[Thm. 33.17]{book_of_involutions}.
\begin{proof}[Sketch of Proof]
It follows from the previous proposition that both $x^2$ and $y^2$ are elements of $k$ and, moreover, that $\sigma(x)=-x$ and $\sigma(y)=-y$.
The condition $(x,y) = 0$ implies that $x\sigma(y) + y\sigma(x) = 0$.
From this it follows that $xy= -yx$.
Since $B$ is alternative we can use this relation to show that $k\langle x,y \rangle$ is associative, consequently, the algebra is a quaternion algebra.
\end{proof}
The above argument shows in fact that all rank $4$ subalgebras are associative.

\begin{prop}
Let $B$ be a CD-algebra of rank at most $8$. The norm $N_B$ is multiplicative and consequently does not depend on parenthesis or rearrangement of terms.
\end{prop}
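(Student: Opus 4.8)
The plan is to argue by induction on the rank, following the Cayley--Dickson tower: a CD-algebra $B$ of rank $2$, $4$, or $8$ is by construction of the form $B = A \oplus A$ for a CD-algebra $A$ of rank at most $4$ and a scalar $\delta \in k^\times$, and such an $A$ is associative (a CD-algebra of rank at most $4$ is associative, by the propositions above). The base case $B = k$ is immediate, since there $N_B(x) = x^2$; so I may assume inductively that $N_A$ is multiplicative on $A$.

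First I would record that on $B = A \oplus A$ one has $N_B([x_1,x_2]) = N_A(x_1) - \delta\,N_A(x_2)$. This is a direct computation from $\sigma_B([x_1,x_2]) = [\sigma_A(x_1),-x_2]$, the multiplication law, and the identities $u\sigma_A(u) = \sigma_A(u)u = N_A(u) \in k$ for $u \in A$ (which themselves follow from $\sigma_A(u) = T_A(u) - u$ with $T_A(u) := u + \sigma_A(u) \in k$).

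The main step is to verify $N_B(xy) = N_B(x)N_B(y)$ for $x = [a,b]$, $y = [c,d]$. Expanding the left-hand side using $xy = [ac + \delta\sigma_A(d)b,\ da + b\sigma_A(c)]$, the polarization $N_A(u+v) = N_A(u) + N_A(v) + T_A(u,v)$ with $T_A(u,v) := u\sigma_A(v) + v\sigma_A(u) \in k$, the inductive multiplicativity of $N_A$, and $N_A(\sigma_A(u)) = N_A(u)$, one finds that the ``diagonal'' contributions reproduce exactly $(N_A(a) - \delta N_A(b))(N_A(c) - \delta N_A(d)) = N_B(x)N_B(y)$, so that the claim collapses to the vanishing of the cross terms, i.e.\ to the identity $T_A(ac,\sigma_A(d)b) = T_A(da,\,b\sigma_A(c))$. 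Setting $u = ac\,\sigma_A(b) \in A$ and using $\sigma_A(xy) = \sigma_A(y)\sigma_A(x)$ together with $\sigma_A^2 = \id$, the left side equals $ud + \sigma_A(ud)$ and the right side equals $du + \sigma_A(du)$; here it is essential that $A$ is associative, so that these products are unambiguous. Finally, substituting $\sigma_A(w) = T_A(w) - w$ into $\sigma_A(vw) = \sigma_A(w)\sigma_A(v)$ yields
\[ T_A(vw) = vw + wv + T_A(v)T_A(w) - T_A(v)w - T_A(w)v, \]
which is symmetric in $v$ and $w$; hence $T_A(ud) = T_A(du)$ and multiplicativity follows.

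The final assertion is then formal: for $z_1,\dots,z_n \in B$, any parenthesization of the product $z_1\cdots z_n$ has norm $\prod_i N_B(z_i)$ by repeated use of multiplicativity, and since $N_B$ is $k$-valued this is also insensitive to the order of the factors. The only real labour is the bookkeeping in the cross-term identity; the substantive point is the symmetry $T_A(vw) = T_A(wv)$, which fails once $A$ is non-associative and is precisely why the hypothesis $\rk B \le 8$ cannot be dropped.
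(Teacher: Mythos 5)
Your proof is correct, but it takes a genuinely different route from the paper. The paper disposes of multiplicativity in one line: given $x,y\in B$, the product $xy$ and the norms are computed inside the rank~$4$ subalgebra generated by $x$ and $y$, which the earlier propositions identify as an associative quaternion algebra, where $N(xy)=xy\sigma(y)\sigma(x)=N(x)N(y)$ is immediate. You instead run an induction up the Cayley--Dickson tower: writing $B=A\oplus A$ with $A$ associative of rank at most $4$, you compute $N_B([x_1,x_2])=N_A(x_1)-\delta N_A(x_2)$, expand $N_B(xy)$ by polarization, and reduce the whole claim to the cross-term identity, which via associativity of $A$ collapses to the symmetry of the trace, $T_A(ud)=T_A(du)$. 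Your computation checks out (including the reduction $T_A(ac,\sigma_A(d)b)=T_A(ud+\sigma_A(ud))$-style bookkeeping), and it has the virtue of being self-contained: it does not lean on the two-generated-subalgebra structure results, which in the paper are stated under genericity hypotheses (generators of unit norm, orthogonality) that the paper's one-line proof quietly glosses over; the price is the explicit bookkeeping. One small inaccuracy in your closing aside: the trace symmetry $T_A(vw)=T_A(wv)$ does \emph{not} fail for non-associative $A$ --- your own derivation of it uses only that $\sigma_A$ is an anti-automorphism with $\sigma_A(w)=T_A(w)-w$, so it holds in the octonions as well; what breaks at rank $16$ is the associativity you invoke to regroup the cross terms as $ud$ and $du$ in the first place. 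This does not affect the validity of the proof of the stated proposition.
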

See \cite[Thm. 33.17]{book_of_involutions}.
\begin{proof}
It suffices to show that $N_B(xy) = N_B(x)N_B(y)$. 
This expression is being evaluated in the rank $4$ subalgebra generated by $x,y$. This is an associative algebra, and the result is thus clear.
\end{proof}

\begin{rmk}
We have shown that the following hold:
\begin{enumerate}
\item
If $A=k$ and $\sigma_A$ is trivial then $B$ is a quadratic extension of $k$. 
\item
If $A$ is a quadratic algebra over $k$ with $\sigma_A$ the unique non-trivial involution, then $B$ is a quaternion algebra.
\item
If $A$ is a quaternion algebra over $k$ with $\sigma_A$ the standard involution, then $B$ is an octonion algebra.
\end{enumerate}
\end{rmk}

\begin{prop}
Let $B$ be a CD-algebra of rank $8$ over $k$, let $x\in  B$ be orthogonal to $1$, let $y\in B$ be orthogonal to the subalgebra $A_x$ generated by $x$ and let $z\in B$ be orthogonal to the subalgebra $A_{x,y}$ generated by $x$ and $y$. Suppose further that all of $x,y,z$ have norms in $k^\times$.
Then the elements $x,y,z$ generate $B$ over $k$ and $B$ is isomorphic to the algebra obtained by the Cayley-Dickson construction with $A_{x,y}$ and $\delta=-N_B(z)$.
\end{prop}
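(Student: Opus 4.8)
The plan is to realise $B$ as the Cayley-Dickson double of the quaternion subalgebra $A_{x,y}$: first I would produce the $k$-module decomposition $B = A_{x,y}\oplus A_{x,y}z$ with $A_{x,y}z$ orthogonal to $A_{x,y}$, and then match the multiplication with the Cayley-Dickson product. Since $1\in A_{x,y}$ and $z$ is orthogonal to $A_{x,y}$, the element $z$ is orthogonal to $1$, so by the propositions above $\sigma_B(z)=-z$ and $z^2 = -N_B(z)$, which lies in $k^\times$ by hypothesis. Because $B$, being a rank-$8$ Cayley-Dickson algebra, is alternative, right-alternativity gives $(az)z = az^2 = -N_B(z)\,a$ for $a\in A_{x,y}$; hence $az = 0$ forces $a = 0$, the map $a\mapsto az$ is injective, and $A_{x,y}z$ is a $4$-dimensional $k$-subspace of $B$.

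Next I would show $A_{x,y}z$ is orthogonal to $A_{x,y}$. Writing $a = c + a_0$ with $c\in k$ and $a_0$ orthogonal to $1$, the three elements $1$, $a_0$, $z$ are pairwise orthogonal and each orthogonal to $1$, hence pairwise anticommute as noted above; thus $za_0 = -a_0 z$ and so $za = \sigma_B(a)z$, equivalently $az = z\sigma_B(a)$. For $a,b\in A_{x,y}$ the adjoint identity $(uv,w) = (v,\sigma_B(u)w)$ valid in composition algebras (obtained by polarizing the multiplicativity of $N_B$; cf.\ \cite[Thm.~33.17]{book_of_involutions}) then gives $(az,b) = (z,\sigma_B(a)b) = 0$, because $\sigma_B(a)b\in A_{x,y}$ while $z\perp A_{x,y}$. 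Since $N_B$ restricts to a nondegenerate form on the quaternion algebra $A_{x,y}$, this forces $A_{x,y}\cap A_{x,y}z = 0$, and a dimension count gives $B = A_{x,y}\oplus A_{x,y}z$; in particular $x,y,z$ generate $B$ over $k$.

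It then remains to check that the $k$-linear bijection $[a_1,a_2]\mapsto a_1 + a_2 z$ from the Cayley-Dickson algebra built from $A_{x,y}$ and $\delta = z^2 = -N_B(z)$ onto $B$ is an algebra isomorphism. Expanding $(a_1+a_2 z)(b_1+b_2 z)$ by distributivity, this reduces to the identities $a_1(b_2 z) = (b_2 a_1)z$, $(a_2 z)b_1 = (a_2\sigma_B(b_1))z$, and $(a_2 z)(b_2 z) = \delta\,\sigma_B(b_2)\,a_2$ for $a_i,b_i\in A_{x,y}$, each of which follows from $az = z\sigma_B(a)$, $z^2=\delta$, the associativity of $A_{x,y}$, and the Moufang identities in the alternative algebra $B$; for example $(a_2 z)(b_2 z) = (z\sigma_B(a_2))(b_2 z) = z\bigl(\sigma_B(a_2)b_2\bigr)z = z^2\,\sigma_B\bigl(\sigma_B(a_2)b_2\bigr) = \delta\,\sigma_B(b_2)\,a_2$ via the middle Moufang identity and left-alternativity. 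This identification is precisely the content of \cite[Thm.~33.17]{book_of_involutions}, and it exhibits $\delta = -N_B(z)$. I expect the main obstacle to be the non-associativity: the three product identities (and, should one want the elementary route, the orthogonality $A_{x,y}z\perp A_{x,y}$) have to be pushed through the Moufang and flexible laws rather than by freely rearranging parentheses, and keeping track of $\sigma_B$ and of the anticommutation relations among $x$, $y$, $z$, $xy$ is where the care is needed; the dimension count, the injectivity of $a\mapsto az$, and the fact that $x,y,z$ generate $B$ are then formal.
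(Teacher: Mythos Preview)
Your argument is correct and is essentially the standard doubling argument behind \cite[Thm.~33.17]{book_of_involutions}, which both you and the paper invoke. The paper's own sketch differs only in emphasis: for orthogonality of the basis $\{1,x,y,xy,z,xz,yz,x(yz)\}$ it appeals directly to the multiplicativity of $N_B$ rather than to the adjoint identity $(uv,w)=(v,\sigma_B(u)w)$ you use (these are of course equivalent by polarization), and for the multiplication table it reduces each product of basis vectors to a computation inside one of the associative quaternion subalgebras $A_{x,z}$, $A_{y,z}$, $A_{xy,z}$ rather than invoking the Moufang identities globally. Your route has the advantage of handling all of $A_{x,y}$ at once without choosing a basis, at the cost of having to keep the Moufang and flexible laws straight (as you note); the paper's route trades that bookkeeping for a basis-by-basis check where associativity is available, but then needs the extra anti-associativity relation $(xy)z=-x(yz)$ to finish, which is exactly the content hidden in your Moufang manipulations. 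One small slip: in your displayed computation of $(a_2z)(b_2z)$ you write ``left-alternativity'' where you actually use flexibility $z(cz)=(zc)z$ together with right-alternativity $(\sigma_B(c)z)z=\sigma_B(c)z^2$; this does not affect the argument.
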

Follows from proof of \cite[Thm. 33.17]{book_of_involutions}.
\begin{proof}[Sketch of Proof]
One can most easily confirm the multiplication law by using the fact that each of $A_{x,z}$, $A_{y,z}$ and $A_{xy,z}$ is a quaternion algebra.

Moreover, the multiplicitivity of the norm allows one to check that $\{ 1,x,y,xy,z, xz,yx, x(yz) \}$ are an orthogonal basis.
\end{proof}

\begin{rmk}
The multiplication rules for $\{ 1,x,y,xy,z, xz,yx, xyz \}$ are entirely described by the fact that the pairs $\{x,y\}$, $\{x,z\}$, $\{y,z\}$ generate quaternion algebras, that $(xy)z = -x(yz)$ and that $z,xz,yx,x(yz)$ are interchangeable in the previous statement.
\end{rmk}

\subsection{Auxilliary Structures on CD-Algebras}

In this section we shall construct some auxilliary structures on CD-algebras.
These shall all be constructed using the algebra operations of the underlying CD-algebra.
Consequently, automorphism of the algebra which preserve the data defining the structure, must also preserve the structure.
These structures will thus allow us to study the automorphism group of the algebra.

\begin{prop}
Let $B$ be a CD-algebra of rank at most $8$ over $k$, and let $x\in  B$ be orthogonal to $1$  with $N_B(x) \in k^\times$.
Let $A_x$ be the subalgera generated by $x$ and $A_x^\perp$ be the $k$-submodule of $B$ consisting of elements perpendicular to $A_x$.
Then $A_x^\perp$ and $B$ have the canonical structure of an $A_x$-module.
\end{prop}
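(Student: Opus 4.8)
The plan is to put on $B$ the $A_x$-module structure given by left multiplication in $B$, $a\cdot b:=ab$ for $a\in A_x$, $b\in B$, and then to verify in turn that this is a module structure and that $A_x^\perp$ is a submodule. Recall from the preceding propositions that, since $\dim_k B\le 8$, the algebra $B$ is alternative; that $x\perp 1$ forces $\sigma_B(x)=-x$, hence $x^2=-N_B(x)\in k$; and that $A_x=k\,1\oplus k\,x$ is a commutative quadratic $k$-algebra. When $\dim_k B\le 2$ we have $A_x=B$ and $A_x^\perp=0$, so the statement is trivial, and I may assume $\dim_k B\in\{4,8\}$. Biadditivity and $k$-bilinearity of $(a,b)\mapsto ab$, together with $1\cdot b=b$, are immediate from the distributive law in $B$. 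The one module axiom needing an argument is $(a_1a_2)\cdot b=a_1\cdot(a_2\cdot b)$ for $a_1,a_2\in A_x$: since both sides are $k$-bilinear in $(a_1,a_2)$ and $A_x$ is spanned over $k$ by $1$ and $x$, it suffices to check the cases $a_1,a_2\in\{1,x\}$, and the only nontrivial one is $x(xb)=x^2b$, which is precisely the left alternative law. Hence $B$ is a left $A_x$-module.

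It remains to show that $A_x^\perp$ is stable under the action. It is obviously a $k$-submodule of $B$ stable under multiplication by $1$, so it is enough to prove that $b\perp A_x$ implies $xb\perp A_x$, i.e. $(xb,1)=0$ and $(xb,x)=0$. As $1\in A_x$, orthogonality $b\perp A_x$ gives $b\perp 1$, hence $\sigma_B(b)=-b$; since $\sigma_B$ is an anti-automorphism and $\sigma_B(x)=-x$, we get $\sigma_B(xb)=\sigma_B(b)\sigma_B(x)=bx$. Now $0=(b,x)=\tfrac12\bigl(b\sigma_B(x)+x\sigma_B(b)\bigr)=-\tfrac12(bx+xb)$, so $xb+bx=0$, and therefore $(xb,1)=\tfrac12\bigl(xb+\sigma_B(xb)\bigr)=\tfrac12(xb+bx)=0$. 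For the second pairing, $(xb,x)=\tfrac12\bigl((xb)\sigma_B(x)+x\sigma_B(xb)\bigr)=\tfrac12\bigl(-(xb)x+x(bx)\bigr)$, which vanishes because $(xb)x=x(bx)$: this is the flexible law, a standard consequence of alternativity in characteristic $\ne 2$ (equivalently, $x$ and $b$ generate an associative subalgebra by Artin's theorem). Thus $xb\in A_x^\perp$, so $A_x^\perp$ is an $A_x$-submodule of $B$, and both $B$ and $A_x^\perp$ carry the claimed $A_x$-module structure.

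Almost everything above is a direct consequence of the distributive and alternative laws together with $\car k\ne 2$, so I do not anticipate a genuine obstacle. The only place requiring a moment's care is the stability of $A_x^\perp$ under multiplication by $x$, and within that the flexible identity $(xb)x=x(bx)$ — the one point where one must appeal to a standard fact about alternative algebras slightly stronger than the bare left and right alternative laws.
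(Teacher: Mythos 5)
Your proof is correct, and it verifies the same module structure (left multiplication by $A_x$) as the paper does, but by a genuinely different route. The paper reduces both checks to elements $y$ in a suitable basis of $B$ with $N_B(y)\in k^\times$ and then invokes its earlier structural results: the subalgebra $A_{x,y}$ is a quaternion algebra, hence associative, which gives $(a_1a_2)y=a_1(a_2y)$ at once, and the containment $xy\in A_x^\perp$ is likewise read off inside $A_{x,y}$. You instead work directly in $B$: the module axiom is reduced by $k$-bilinearity to the single identity $x(xb)=(x^2)b$, i.e.\ the left alternative law, and the stability of $A_x^\perp$ is an explicit computation of the pairings $(xb,1)$ and $(xb,x)$, using $\sigma_B(u)=-u$ for $u\perp 1$, the anti-automorphism property of $\sigma_B$, the anticommutation $xb=-bx$ extracted from $(b,x)=0$, and the flexible identity $(xb)x=x(bx)$ (which indeed follows from alternativity, e.g.\ by linearizing the alternative laws or by Artin's theorem; in fact no characteristic hypothesis is needed for that step, though the paper's standing assumption $\operatorname{char}k\neq 2$ makes this moot). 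The trade-off: the paper's argument is shorter because it leans on the already-proved classification of rank-$4$ subalgebras, while yours is more self-contained and avoids the (slightly delicate) reduction to basis elements of unit norm orthogonal to $A_x$, at the cost of appealing to the anti-automorphism property of $\sigma_B$ and flexibility, standard facts the paper does not spell out but uses freely elsewhere. Either way the mathematics is sound.
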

\begin{proof}
We first show that $B$ has an $A_x$-module structure.
As left multiplication by elements of $A_x$ is linear on $B$, the main concern is that for $a_1,a_2\in A_x$ and $y\in B$ we need to show:
\[ (a_1a_2)y = a_1(a_2y). \]
We need only check this on a basis of $B$, thus can assume $N_B(y)\in k^\times$. In this case all of these operations take place in the algebra $A_{x,y}$, which is a quaternion algebra, and hence associative.

It now only remains to show that this action of $A_x$ descends to an action of $A_x^\perp$.
All elements of $A_x$ are of the form $a+bx$, such an element acts on $y\in A_x^\perp$ by sending it to $ay+bxy$.
As $ay\in A_x^\perp$, it remains only to show $bxy\in A_x^\perp$, or equivalently, $xy\in A_x^\perp$.
This is true in the quaternion algebra $A_{x,y}$, hence is true in $B$.
\end{proof}

\begin{prop}
Let $B$ be a CD-algebra of rank at most $8$ over $k$, and let $x\in  B$ be orthogonal to $1$ with $N_B(x)\neq 0$.
Let $A_x$ be the subalgera generated by $x$.
Then:
\[ (ay,z) = (y,\sigma(a)z) \]
for all $a\in A_x$ and $y,z\in B$.
Moreover, the bilinear form:
\[ S(y,z) = (xy,z) \]
is an alternating form on $B$.
\end{prop}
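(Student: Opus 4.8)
The plan is to derive both statements purely formally from the multiplicativity of the reduced norm $N_B$, which is available from the preceding proposition, by polarizing it; no choice of basis is needed.

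First I would record the easy reductions. Since $(x,1_B)=0$, the facts already established give $\sigma(x)=-x$ and $A_x=k\oplus kx$, so that $\sigma(\alpha+\beta x)=\alpha-\beta x$ for $\alpha,\beta\in k$. Moreover $(w,w)=w\sigma(w)=N_B(w)$, whence $N_B(w+w')=N_B(w)+2(w,w')+N_B(w')$ for all $w,w'\in B$. Because the identity $(ay,z)=(y,\sigma(a)z)$ is $k$-linear in $a$ and holds trivially for $a\in k$, it is enough to establish the single identity
\[ (xy,z)+(y,xz)=0\qquad\text{for all }y,z\in B. \]
Granting it, for $a=\alpha+\beta x\in A_x$ one computes $(ay,z)=\alpha(y,z)+\beta(xy,z)=\alpha(y,z)-\beta(y,xz)=(y,(\alpha-\beta x)z)=(y,\sigma(a)z)$, which is the first assertion; and, using symmetry of $(\cdot,\cdot)$, it also gives $S(z,y)=(xz,y)=(y,xz)=-(xy,z)=-S(y,z)$, so that $S$ is skew-symmetric.

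The one substantive step is to produce the displayed identity from $N_B(pq)=N_B(p)N_B(q)$. Replacing $p$ by $p+u$, expanding both sides using $N_B(w+w')=N_B(w)+2(w,w')+N_B(w')$, and cancelling $N_B(pq)=N_B(p)N_B(q)$ and $N_B(uq)=N_B(u)N_B(q)$, one obtains (after dividing by $2$, using $\car k\neq 2$)
\[ (pq,uq)=(p,u)\,N_B(q)\qquad\text{for all }p,u,q\in B. \]
Polarizing this identity once more, now in the variable $q$ (replace $q$ by $q+w$ and cancel the instances at $q$ and at $w$), gives
\[ (pq,uw)+(pw,uq)=2(p,u)(q,w). \]
Specializing to $p=x$, $u=1_B$ and using $(x,1_B)=0$ yields $(xq,w)+(xw,q)=0$, which by symmetry of the form is precisely the identity $(xy,z)+(y,xz)=0$. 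For the second assertion, $S(y,z)=(xy,z)$ is manifestly bilinear, and putting $z=y$ in this identity gives $2(xy,y)=0$, hence $S(y,y)=(xy,y)=0$; thus $S$ is alternating. The main --- indeed essentially the only --- nontrivial point is this double polarization of the norm's multiplicativity; the one thing to watch is that the resulting relations are genuine polynomial identities valid for all elements of $B$, which is legitimate because the multiplicativity of $N_B$ holds without any nonvanishing hypothesis, so I do not anticipate a real obstacle.
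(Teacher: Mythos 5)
Your argument is correct. Both the paper and you take the multiplicativity of $N_B$ as the engine, but the routes diverge after the first polarization: the paper polarizes $N_B(aw)=N_B(a)N_B(w)$ in $w$ to get $(ay,az)=N_B(a)(y,z)$ and then substitutes $z\mapsto a^{-1}z$, using $\sigma(a)=N_B(a)a^{-1}$ together with the alternativity fact $a(a^{-1}z)=z$ --- this is where the hypothesis $N_B(x)\neq 0$ enters, since $a$ must be invertible. You instead polarize a second time, in $q$, to reach the four-variable identity $(pq,uw)+(pw,uq)=2(p,u)(q,w)$ and specialize at $(p,u)=(x,1_B)$, which uses only $(x,1_B)=0$. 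Your version buys a cleaner logical footing: no inverses, no appeal to alternativity beyond distributivity, and in fact the adjoint identity and the alternating property of $S$ come out without ever invoking $N_B(x)\neq 0$ (that hypothesis is only needed so that $A_x$ is a quadratic \'etale subalgebra in the surrounding discussion). The paper's version is shorter and stays closer to the already-established formula $(ay,az)=N_B(a)(y,z)$. Your reduction of the general $a=\alpha+\beta x$ to the single case $a=x$ by $k$-linearity, and the deduction of skew-symmetry and of $S(y,y)=0$ in characteristic $\neq 2$, match the paper's intent and are carried out correctly.
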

\begin{proof}
By the multiplicativity of the norm (and the polarization identity) we have:
\[ (ay,az) = N_B(a)(y,z). \]
It follows that if $N_B(a)\in k^\times$:
\[ (ay,a(a^{-1})z)) = (a\sigma(a))(y,a^{-1}z) = (y,\sigma(a)z). \]
Since it suffices to check this identity for $a=x$, the result follows immediately.
\end{proof}

\begin{prop}
Let $B$ be a CD-algebra of rank at most $8$ over $k$, and let $x\in  B$ be orthogonal to $1$.
Let $A_x$ be the subalgera generated by $x$.

Then $A_x^\perp$ has the (canonical up to rescaling) structure of an $A_x$-Hermitian space by applying the polarization identity to the $k$-valued form $N_B|_{A_x^\perp}$.
More concretely the form:
\[ H(y,z) = \tfrac{1}{2}(y,z) + \tfrac{1}{2x}(xy,z) \]
is an $A_x$-valued Hermitian form on $A_x^\perp$.
\end{prop}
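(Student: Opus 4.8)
The plan is to verify the three defining properties of a Hermitian form for the explicit $H$ directly, leaning on the facts just established in this subsection: that $A_x^\perp$ and $B$ are $A_x$-modules with $A_x$ commutative; that $(ay,z)=(y,\sigma(a)z)$ and $(ay,az)=N_B(a)(y,z)$ for $a\in A_x$; and that $S(y,z)=(xy,z)$ is $k$-valued and alternating. Since $x\perp 1$ we have $\sigma(x)=-x$ and $x^2=-N_B(x)$, and the symbol $\tfrac{1}{2x}$ presupposes $N_B(x)\in k^\times$ (as in the preceding two propositions), so that $x^{-1}=-x/N_B(x)$, $N_B(x)/x=-x$, and $\sigma(x^{-1})=-x^{-1}$; these identities are what make the computations below go through.

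First I would check $A_x$-valuedness: since $(y,z),(xy,z)\in k$, the first term of $H$ lies in $k$ and the second in $kx$, so $H(y,z)\in k\oplus kx=A_x$. Next, conjugate-symmetry $H(z,y)=\sigma(H(y,z))$ follows from the symmetry of $(\cdot,\cdot)$, the alternation of $S$ (which gives $(xz,y)=-(xy,z)$), and $\sigma(x^{-1})=-x^{-1}$. For sesquilinearity, $H$ is manifestly $k$-bilinear, so it suffices to handle multiplication by $x$ in each slot; the key short computations are $H(xy,z)=\tfrac12(xy,z)+\tfrac x2(y,z)=xH(y,z)$ (using $x(xy)=-N_B(x)y$) and, dually, $H(y,xz)=-xH(y,z)=\sigma(x)H(y,z)$ (using alternation of $S$ and $(xy,xz)=N_B(x)(y,z)$), so $H$ is $A_x$-linear in the first variable and conjugate-linear in the second.

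It remains to match $H$ with the polarization of $N_B|_{A_x^\perp}$ and to record nondegeneracy. Here $H(y,y)=\tfrac12 N_B(y)$ because $(y,y)=N_B(y)$ and $(xy,y)=S(y,y)=0$, which identifies $H$ as the Hermitian lift of $N_B|_{A_x^\perp}$ up to the harmless normalizing factor $\tfrac12$; replacing $x$ by $\lambda x$ ($\lambda\in k^\times$) visibly leaves $H$ unchanged, so $H$ depends only on $A_x$ and $N_B$ up to an overall scalar, which is the sense of ``canonical up to rescaling''. Nondegeneracy is then automatic: $\Tr_{A_x/k}(H(y,z))=(y,z)$, and $N_B$ is nondegenerate on $A_x^\perp$, the latter being the orthogonal complement of the nondegenerate subspace $A_x$ inside the nondegenerate quadratic space $(B,N_B)$. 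I do not anticipate a genuine obstacle; the only delicate point is orienting the conventions in the sesquilinearity step so that the conjugation lands on the second variable rather than the first.
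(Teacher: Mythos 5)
Your proposal is correct and follows essentially the same route as the paper's proof: a direct verification of conjugate symmetry via the adjointness/alternation identity $(xy,z)=-(xz,y)$ and of $A_x$-sesquilinearity via the computation $H(xy,z)=\tfrac12(xy,z)+\tfrac{x}{2}(y,z)=xH(y,z)$. Your additional checks ($A_x$-valuedness, conjugate-linearity in the second slot, the identification $H(y,y)=\tfrac12 N_B(y)$ with the polarization, nondegeneracy, and the implicit hypothesis $N_B(x)\in k^\times$) are sound and merely make explicit points the paper leaves to the reader.
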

\begin{proof}
It is clear that the form $H$ is $k$-linear.
We must check $A_x$-linearity, and conjugate symmetry. 
For conjugate symmetry notice:
\begin{align*}
H(y,z) &= \tfrac{1}{2}(y,z) + \tfrac{1}{2x}(xy,z) \\
           &= \tfrac{1}{2}(z,y) + \tfrac{1}{2x}(y,-xz) \\
           &= \tfrac{1}{2}(z,y) + \tfrac{1}{2x}(-xz,y) \\
           &=  \tfrac{1}{2}(z,y) - \tfrac{1}{2x}(xz,y) \\
           &= \sigma(H(y,z)).
\end{align*}
For $A_x$-linearity it suffices to check the linearity with respect to the action of $x$. Indeed we compute
\begin{align*}
 H(xy,z) &= \tfrac{1}{2}(xy,z) + \tfrac{1}{2x}(x^2y,z)  \\
             &= \tfrac{1}{2}(xy,z) + \tfrac{x^2}{2x}(y,z)\\
             &= x(\tfrac{1}{2x}(xy,z) + \tfrac{1}{2}(y,z))\\
             &=  x H(y,z).
\end{align*}
\end{proof}

\begin{prop}
Let $B$ be a CD-algebra of rank at most $8$ over $k$, let $x\in  B$ be orthogonal to $1$, and let $A_x$ be the subalgera generated by $x$.
There is a map $\Pi : A_x^\perp \times A_x^\perp \rightarrow A_x^\perp$
given by 
\[ (a,b) \mapsto \tfrac{1}{2}(ab-ba - H(ab-ba,1_B)). \]
The map $\Pi$ is alternating and $\sigma_{A_x}$-linear.
\end{prop}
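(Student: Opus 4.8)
The plan is to realise $\Pi$ intrinsically through the $k$-trilinear form $\phi(a,b,c)=(ab,c)$ on $B$, where $(\,\cdot\,,\,\cdot\,)$ denotes the bilinear pairing attached to $N_B$. Since $N_B(x)\in k^\times$ the subalgebra $A_x$ is non-degenerate, so $B=A_x\oplus A_x^\perp$ is an orthogonal decomposition and $(\,\cdot\,,\,\cdot\,)$ restricts non-degenerately to $A_x^\perp$; a $k$-linear functional on $A_x^\perp$ is therefore represented by a unique vector of $A_x^\perp$. I will check that for fixed $a,b\in A_x^\perp$ the functional $c\mapsto\phi(a,b,c)$ on $A_x^\perp$ is represented by the element displayed in the statement; this simultaneously proves $\Pi$ lands in $A_x^\perp$ and supplies the clean characterisation $(\Pi(a,b),c)=\phi(a,b,c)$ for all $c\in A_x^\perp$, from which everything else follows almost formally.

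All the input I need comes from the propositions above: $(uv,w)=(v,\sigma(u)w)$ for all $u,v,w\in B$ (the polarisation of $N_B(uv)=N_B(u)N_B(v)$ used there for $u\in A_x$ works for any invertible $u$), the relations $\sigma(u)=-u$ and $\sigma(u)u=N_B(u)1_B$ on $1_B^\perp$, the fact that $A_x^\perp$ is stable under left multiplication by $x$, and $uv+vu=-2(u,v)1_B$ for $u,v\in 1_B^\perp$. From $(ab,b)=(b,\sigma(a)b)=-(ab,b)$, from $(ab,a)=(b,\sigma(a)a)=N_B(a)(b,1_B)=0$, and from $(a^2,c)=-N_B(a)(1_B,c)=0$ one sees that $\phi$ vanishes whenever two of its arguments lie in $1_B^\perp$ and coincide, so (as $\car k\neq 2$) $\phi$ is alternating on $1_B^\perp$, hence on $A_x^\perp$. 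Well-definedness is the computation identifying the displayed element with the representing vector of $c\mapsto\phi(a,b,c)$: one notes $ab-ba\in 1_B^\perp$ (its image under $\sigma$ is $ba-ab$), that $H(ab-ba,1_B)$ — read off from the defining formula for $H$ — is the multiple of $x$ for which the difference becomes orthogonal to $A_x$, and that what then remains pairs against $A_x^\perp$ the way $\phi(a,b,-)$ does. Given the characterisation, $\Pi(a,a)=0$ and $\Pi(a,b)=-\Pi(b,a)$ follow from the corresponding properties of $\phi$ via non-degeneracy on $A_x^\perp$.

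The substantive point is $\sigma_{A_x}$-semilinearity. As $\Pi$ is $k$-bilinear and $A_x=k\oplus kx$, it suffices to prove $\Pi(xa,b)=\sigma_{A_x}(x)\Pi(a,b)=-x\,\Pi(a,b)$ for $a,b\in A_x^\perp$, and semilinearity in the second slot then follows by antisymmetry. Pairing both sides against an arbitrary $c\in A_x^\perp$, and using $(uv,w)=(v,\sigma(u)w)$, the stability $xc\in A_x^\perp$, and the characterisation of $\Pi$, the claim reduces to the single identity
\[ ((xa)b,c)=(ab,xc)\qquad(a,b,c\in A_x^\perp), \]
which, again by $(uv,w)=(v,\sigma(u)w)$ and $\sigma|_{1_B^\perp}=-\id$, is in turn equivalent to $(ax)c+a(xc)\in A_x$. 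I would get this last inclusion from the linearised alternative laws: $a(xc)=-x(ac)$ since $ax+xa=-2(a,x)1_B=0$; the linearised right-alternative law gives $(xa)c+(xc)a=-2(a,c)x$; and $(xc)a+a(xc)=-2(xc,a)1_B$ because $xc,a\in 1_B^\perp$. Combining, $(ax)c+a(xc)=2(a,c)x-2(xc,a)1_B\in A_x$, and non-degeneracy on $A_x^\perp$ promotes the pairing identity to the desired equality of vectors.

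The only real obstacle I anticipate is the bookkeeping in this last step: none of the subalgebras present ($A_x$, the quaternion algebra generated by two perpendicular imaginary elements, and so on) is all of $B$, so one cannot re-associate freely, and the linearised alternative and flexibility identities have to be applied in precisely the right pattern, with care about which products actually land in $1_B^\perp$. Once the form $\phi$ and the adjunction $(uv,w)=(v,\sigma(u)w)$ are in hand, the rest is formal.
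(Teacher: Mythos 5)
Your argument is correct, but it takes a genuinely different route from the paper's. The paper proceeds by computation on a basis: since $\Pi$ is visibly $k$-bilinear and alternating, it suffices (by the symmetry of the construction) to examine the pairs $(y,y)$, $(y,xy)$ and $(y,z)$; the first two vanish because the computation takes place inside the quaternion algebra $A_{x,y}$, and semilinearity then comes from $\Pi(y,z)=yz$ together with the relation $(xy)z=-x(yz)$. You instead characterize $\Pi(a,b)$ as the unique vector of $A_x^\perp$ with $(\Pi(a,b),c)=(ab,c)$ for all $c\in A_x^\perp$, and reduce semilinearity, via the adjunction $(uv,w)=(v,\sigma(u)w)$, to the inclusion $(ax)c+a(xc)\in A_x$, which you obtain correctly from the linearized alternative laws; this is coordinate-free, it treats explicitly the claim that $\Pi$ lands in $A_x^\perp$ (left implicit in the paper), and the identity $(\Pi(a,b),c)=(ab,c)$ is a useful by-product for the trilinear form $T$ of the next proposition. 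Two small points to tighten: you invoke $(uv,w)=(v,\sigma(u)w)$ with $u=a$ and $u=xa$ for arbitrary, possibly isotropic, elements of $A_x^\perp$, so the invertible-$u$ justification needs a supplement --- either linearity in $u$ together with the fact that anisotropic vectors span $B$, or directly polarizing $(uv,uw)=N_B(u)(v,w)$ in $u$ and setting one slot equal to $1_B$, which yields $(uv,w)+(v,uw)=2(u,1_B)(v,w)$ and hence the identity for every $u$. Also, with the stated normalization of $H$ one has $H(v,1_B)=\tfrac{1}{2}$ times the $A_x$-component of $v$ for $v\perp 1_B$, so the displayed formula lands in $A_x^\perp$ only if $H(ab-ba,1_B)$ is read as the full $A_x$-component; this is the reading you use, and it is also the one forced by the paper's own check that $\Pi(y,xy)=0$, so it affects both arguments equally rather than being a gap in yours.
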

\begin{proof}
The map is manifestly alternating and $k$-linear. We may thus work with a basis for $A_x^\perp$.
Fix $y,z$ as usual.
By the symmetry of the setup it suffices to consider the cases $(a,b) = (y,y), (y,xy),(y,z)$.
In the first two cases $\Pi(a,b)$ is easily checked to be zero as the computations all taking place in the quaternion algebra $A_{x,y}$.
In the third case $\Pi(y,z) = yz$. We recall that $(xy)z = -x(yz)$ and conclude that $\Pi$ is $\sigma_{A_x}$-linear.
\end{proof}

\begin{rmk}
The above map is analogous to the cross product.
\end{rmk}

\begin{prop} 
Let $B$ be a CD-algebra of rank at most $8$ over $k$, let $x\in  B$ be orthogonal to $1$, and
let $A_x$ be the subalgera generated by $x$.

The $A_x$-valued $k$-trilinear map:
\[ T(a,b,c) = \tfrac{1}{2}H(c, ab - ba) = H(c,\Pi(a,b)) \]
on $A_x^\perp$ is a non-trivial $A_x$-linear alternating map.
\end{prop}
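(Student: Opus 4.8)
The plan is to verify in turn that $T$ is $A_x$-trilinear, that it is alternating, and that it is non-trivial; $k$-trilinearity is granted in the statement, being immediate from the $k$-bilinearity of $\Pi$ and of $H$. For $A_x$-linearity, the variable $c$ is easy: since $c$ occupies the first slot of $H$ in $T(a,b,c)=H(c,\Pi(a,b))$, and $H$ is $A_x$-linear in its first argument by the identity $H(xy,z)=xH(y,z)$, we get $T(a,b,x\cdot c)=xT(a,b,c)$, and, as in that proof, it suffices to treat the action of $x$. For the variables $a$ and $b$ the point is a cancellation of two semilinearities: $\Pi$ is $\sigma_{A_x}$-semilinear in each argument (the ``$\sigma_{A_x}$-linearity'' of the previous proposition), while $H$ is $\sigma_{A_x}$-semilinear in its \emph{second} argument (combine $A_x$-linearity in the first argument with conjugate symmetry $H(y,z)=\sigma(H(z,y))$). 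Since $\sigma_{A_x}^2=\id$, the composite $T(a,b,c)=H(c,\Pi(a,b))$ is then genuinely $A_x$-linear in $a$ and in $b$: for instance $T(x\cdot a,b,c)=H\bigl(c,\sigma(x)\Pi(a,b)\bigr)=\sigma(\sigma(x))\,T(a,b,c)=xT(a,b,c)$, and again it suffices to treat the action of $x$.

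Since $\Pi$ is alternating, $T$ vanishes whenever $a=b$, so for the alternating property it remains to show $T(a,b,a)=0$ and $T(a,b,b)=0$ for all $a,b$, that is, that $\Pi(a,b)$ is $H$-orthogonal to each of $a$ and $b$. Using $\car k\neq 2$, polarizing, and passing to the $A_x$-module structure established above, this reduces to finitely many relations among basis vectors, and by the symmetry of the construction (exactly as in the treatment of $\Pi$) to the pairs $(a,b)\in\{(y,y),(y,xy),(y,z)\}$ with $c$ also a basis vector. The first pair is trivial ($\Pi(y,y)=0$), the second is a computation inside the quaternion algebra $A_{x,y}$ (where already $\Pi(y,xy)=0$), and in the third $\Pi(y,z)=yz$, so one must check $H(y,yz)=H(z,yz)=0$; expanding $H(w,w')=\tfrac12(w,w')+\tfrac{1}{2x}(xw,w')$, this is the family of relations $(y,yz)=(xy,yz)=(z,yz)=(xz,yz)=0$, all of which follow from the composition-algebra identities $(au,av)=N_B(a)(u,v)$ and $(uc,vc)=N_B(c)(u,v)$ (polarized multiplicativity of $N_B$), together with $xy=y(-x)$ in $A_{x,y}$ and the mutual orthogonality of $1,x,y,z$.

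For non-triviality, take $B$ of rank $8$ (the only case in which the claim is a genuine assertion, since for rank at most $4$ one has $\dim_k A_x^\perp\le 2$ and every alternating $k$-trilinear form then vanishes). Choose $x\perp 1$, $y\perp A_x$ and $z\perp A_{x,y}$ of norms in $k^\times$ as in the structure theorem; then $\Pi(y,z)=yz\in A_x^\perp$ with $N_B(yz)=N_B(y)N_B(z)\in k^\times$, and
\[
T(y,z,yz)=H(yz,yz)=\tfrac12(yz,yz)+\tfrac{1}{2x}\bigl(x(yz),yz\bigr)=\tfrac12 N_B(y)N_B(z)\neq 0,
\]
because $w\mapsto(xw,w)$ is alternating (the form $S$ of an earlier proposition). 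The only step that calls for genuine computation is the alternating property, and within it the case $(a,b)=(y,z)$ is the crux: it is exactly the assertion that the octonionic ``cross product'' $\Pi$ lands $H$-orthogonal to its two arguments, and I would derive it from the adjunction identities for the norm form rather than by expanding a multiplication table.
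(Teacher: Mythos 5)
Your argument is correct and follows essentially the same route as the paper: deduce $A_x$-linearity from the semilinearity of $\Pi$ and of $H$ in its second slot, reduce the alternating property by polarization and symmetry to the pair $(y,z)$, and settle that case by norm-form (adjunction) identities --- the paper does the same computation in the form $T(y,z,y)=H(y,2yz)=H(\sigma(y)y,2z)=0$. You additionally verify non-triviality via $T(y,z,yz)=H(yz,yz)=\tfrac12 N_B(y)N_B(z)\neq 0$ (and rightly note this is genuinely a rank-$8$ statement), a point the paper's proof leaves unaddressed.
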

\begin{proof}
The formula for $T$ in terms of $\Pi$ follows from the observation that $H(ab-ba,1_B) \in A_x$ is perpendicular to $c\in A_x^\perp$.
The $A_x$-linearity is then clear in light of the previous proposition.
The map is also clearly alternating with respect to $a,b$, so it suffices to check with respect to $a,c$.
Thus it suffices to check:
\[ T(a,b,a) = 0 \]
for all $a,b\in A_x^\perp$.
We may do this with respect to an $A_x$-basis $\{y,z,yz\}$.
We already know this holds for cases where $a=b$.
Moreover, as the setup is algebraically symmetric with respect to any choice of two non-equal basis elements, it suffices to take $a=y$ and $b=z$.
\begin{align*}
T(y,z,y)  &= H(y, yz-zy )\\
               &= H(y,2yz) \\
               &= H(\sigma(y)y, 2z) \\
               &= 0.
\end{align*} 
\end{proof}

\begin{rmk}
The trilinear form $T$ is essentially giving the $A_x$-module determinant on $A_x^\perp$.
\medskip

It is important to note that the $A_x$-module structure, the Hermitian structure, the alternating form and the trilinear map are defined only using the CD-structure on $B$ and the subalgebra $A_x$.

\medskip
It is also important to note that the $A_x$-module structure maps are not automorphisms of $B$.

\medskip
Finally, it is worth noting that the Hermitian form and bilinear form satisfy:
\[ (y,z) = \Tr_{A_x/k}(H(y,z)). \]
\end{rmk}

\subsection{Classifications of CD-algebras}

In this section we shall classify the CD-algebras of ranks $2$, $4$ and $8$ over $p$-adic fields and number fields.

\begin{prop}
The isomorphism class of a CD-algebra $B$ of rank at most $8$ is determined by its rank and its isomorphism class as a quadratic space.
\end{prop}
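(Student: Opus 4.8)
The plan is to induct on the rank of the CD-algebra, which is necessarily $1$, $2$, $4$ or $8$. The starting point is the transparent shape of the norm form of a Cayley--Dickson product: expanding $N_B([x_1,x_2]) = [x_1,x_2]\sigma_B([x_1,x_2])$ with the multiplication law of the definition gives
\[ N_B([x_1,x_2]) = N_A(x_1) - \delta\, N_A(x_2), \]
so that $N_B \cong N_A \perp \langle -\delta\rangle N_A$ as quadratic spaces; in particular $N_B$ is non-degenerate, $N_{A_x}=\langle 1,N_B(x)\rangle$ is the restriction of $N_B$ to $k\cdot 1\oplus k\cdot x$ for $x\perp 1$ of nonzero norm, and $N_{A_{x,y}}=\langle 1,N_B(x),N_B(y),N_B(x)N_B(y)\rangle$ for a quaternion subalgebra $A_{x,y}$. (An algebra isomorphism of CD-algebras visibly preserves the rank and, since it preserves $\sigma$, the norm form, so only the converse requires argument.) The base cases are immediate: rank $1$ is trivial, and for rank $2$ we have $B=k[t]/(t^2-\delta)$ with $N_B\cong\langle 1,-\delta\rangle$; two such forms are isometric iff, after Witt cancellation of $\langle 1\rangle$, $\langle-\delta\rangle\cong\langle-\delta'\rangle$, i.e. $\delta\equiv\delta' \bmod (k^\times)^2$, which is exactly the condition $B\cong B'$.

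For the inductive step, let $B$ and $B'$ be CD-algebras of rank $2m\in\{4,8\}$ with $N_B\cong N_{B'}$, the result being known in rank $m$. Using the structure propositions above, I would first pick generators of a CD-subalgebra $A\subseteq B$ of rank $m$ — that is, $A=A_x$ if $m=2$ and $A=A_{x,y}$ if $m=4$ — together with a vector $z\perp A$ of nonzero norm, so that $B=\mathrm{CD}(A,-N_B(z))$. The key sub-claim is that $B'$ too contains a CD-subalgebra $A'$ of rank $m$ with $N_{A'}\cong N_A$. For this, first observe $N_B|_{1^\perp}\cong N_{B'}|_{1^\perp}$ by Witt cancellation of $\langle 1\rangle$; since $N_B$ represents $N_B(x)$ on $1^\perp$ (at $x$), so does $N_{B'}$, yielding $x'\in B'$ with $x'\perp 1$ and $N_{B'}(x')=N_B(x)\ne 0$ and hence a quadratic subalgebra $A_{x'}$. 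If $m=4$ one proceeds one step further: the isometry $A_x\xrightarrow{\sim}A_{x'}$ (sending $1\mapsto 1$, $x\mapsto x'$) extends, by Witt's extension theorem, to an isometry $N_B\xrightarrow{\sim}N_{B'}$, which carries $A_x^\perp$ onto $A_{x'}^\perp$; transporting $y$ gives $y'\in A_{x'}^\perp$ with $N_{B'}(y')=N_B(y)\ne 0$, whence $A':=A_{x',y'}$ has $N_{A'}=\langle 1,N_B(x),N_B(y),N_B(x)N_B(y)\rangle=N_A$. In either case the inductive hypothesis produces an algebra isomorphism $\psi\colon A\xrightarrow{\sim}A'$.

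It then remains to lift $\psi$ to $B$. A composition-algebra isomorphism respects the norm, so $\psi$ is an isometry of $(A,N_A)\subseteq(B,N_B)$ onto $(A',N_{A'})\subseteq(B',N_{B'})$, and Witt's extension theorem extends it to an isometry $\Psi\colon(B,N_B)\xrightarrow{\sim}(B',N_{B'})$. Then $z':=\Psi(z)\in {A'}^\perp$ satisfies $N_{B'}(z')=N_B(z)$, so by the reconstruction propositions $B'=\mathrm{CD}(A',-N_{B'}(z'))=\mathrm{CD}(A',-N_B(z))$, with the same Cayley--Dickson parameter as $B=\mathrm{CD}(A,-N_B(z))$. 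Writing every element of $B$ uniquely as $a_1+a_2z$ with $a_i\in A$ (using $A^\perp=Az$, which follows from the $A$-module structure on $A^\perp$ together with a dimension count), and similarly for $B'$, the map $a_1+a_2z\mapsto\psi(a_1)+\psi(a_2)z'$ is then compatible with the multiplication law of the definition — because $\psi$ respects products and the standard involution, and the parameter $-N_B(z)$ is matched — and hence is the desired algebra isomorphism $B\xrightarrow{\sim}B'$.

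I expect the one genuine subtlety to be precisely the coordination between the algebra structure and the quadratic-space structure: an isometry of norm forms is not an algebra map, so one cannot transport $B$ to $B'$ along an arbitrary isometry. The point, which is the heart of the argument, is to use the Witt theorems only to align the Cayley--Dickson data in stages — first a half-rank subalgebra up to isometry, then by induction up to isomorphism, then the doubling parameter — after which an honest algebra isomorphism can be written down explicitly. The remaining ingredients (stability of $A^\perp$ under left multiplication by $A$, that $\psi$ preserves the standard involution, and the explicit norm-form identities recorded above) are routine and parallel the propositions already proved.
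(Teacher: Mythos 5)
Your proof is correct and follows essentially the same route as the paper's sketch: an induction on rank that recovers the Cayley--Dickson data (half-rank subalgebra, then doubling parameter) from the quadratic space, with Witt cancellation/extension supplying the details the paper leaves implicit in phrases like ``the isomorphism class of such a quadratic space is well-defined'' and ``$\delta$ is determined up to norms.'' The only cosmetic difference is that you match the parameter $-N_B(z)$ on the nose by transporting $z$ along a Witt extension, whereas the paper pins $\delta$ down only modulo $N_{A/k}(A^\times)$, which suffices for the same conclusion.
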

See \cite[Thm. 33.19]{book_of_involutions}.
\begin{proof}[Sketch of proof]
We proceed somewhat inductively.
The isomorphism class of a rank $2$ CD-algebra $A_x$ is determined by $x^2 = -x\sigma(x) = N_{A_x}(x) \in k^\times$.
The value for $N_{A_x}(x)$ must be represented by a vector perpendicular to an element of norm $1$.

The isomorphism class of a rank $4$ CD-algebra $A_{x,y}$ is determined by a choice of subalgebra $A_x$ and an element $y\in A_x^\perp$.
The options for the isomorphism class of $A_x$ come from values represented by the form $N_{A_x}(x)$ on a subspace perpendicular to vector of norm $1$. (The isomorphism class of such a quadratic space is well-defined.)
The choice of values for $y^2$ comes from those values represented in $A_x^\perp$. By construction, $A_x^\perp \simeq \delta A_x$ as a quadratic space, and thus the choice of $\delta$ is well-defined up to norms of $A_x$ once $A_x$ is chosen.
It is well-known a quaternion algebra is determined by a choice $A_x$ and an element of $\delta \in k^\times/N_{A_x/k}(A_x^\times)$.

For CD-algebras of rank $8$, as above we will find $A_{x,y}^\perp \simeq \delta A_{x,y}$. Thus $\delta$ is determined up to norms from $A_{x,y}$ by the choice of subalgebra $A_{x,y}$ and the quadratic space. By construction an octonion algebra is determined by $A_{x,y}$ and an element of $k^\times/N_{A_{x,y}/k}(A_{x,y}^\times)$. Note that, as with quaternion algebras, non-isomorphic $A_{x,y}$ may give isomorphic $A_{x,y,z}$ under the appropriate choice of $\delta$.

To complete the proof observe that two CD-algebras are isomorphic if and only if they can both be constructed from the same pair $A$ and $\delta$. The options for $A$ are determined by the quadratic form, and $\delta$ is determined by the quadratic form once $A$ is chosen.
\end{proof}
\begin{rmk}
Not all quadratic spaces of a given rank can arise from a CD-algebra.
\end{rmk}

The following facts are reasonably well-known consequences of the above.
\begin{prop}
There is exactly one isomorphism class of CD-algebra over $\bC$ of any given rank.
\end{prop}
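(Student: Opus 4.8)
The plan is to deduce the statement directly from the preceding proposition together with the classification of quadratic forms over an algebraically closed field, after separately noting that at least one CD-algebra of each admissible rank exists over $\bC$.

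For uniqueness, recall that the preceding proposition says a CD-algebra of rank at most $8$ is determined up to isomorphism by its rank and by its isomorphism class as a quadratic space under the reduced norm $N_B$. Over $\bC$ any two non-degenerate quadratic spaces of the same dimension are isometric: diagonalize and then rescale each basis vector by a square root of its norm, which exists because $\bC$ is algebraically closed. Since $N_B$ is non-degenerate by the standing hypotheses of this section, any two CD-algebras over $\bC$ of the same rank have isometric norm forms and are therefore isomorphic.

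For existence, one produces a CD-algebra of each rank $n\in\{1,2,4,8\}$ by iterating the Cayley-Dickson construction, starting from $A=\bC$ with the trivial involution (whose norm $x\mapsto x^2$ is non-degenerate) and choosing, say, $\delta=-1$ at each stage; by the earlier propositions the outputs at ranks $2$, $4$ and $8$ are respectively a quadratic étale algebra, a quaternion algebra and an octonion algebra. Concretely these are $\bC\times\bC$, $\Mat_2(\bC)$ and the split octonion algebra. The only point needing attention is that non-degeneracy of the reduced norm persists through each Cayley-Dickson step so that the induction can continue; but this is exactly the framework already set up in this section (it is implicit in requiring $N_A$ non-degenerate at each stage) and follows from the multiplicativity results established above, so it presents no real obstacle. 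In short, the statement is essentially an immediate corollary of the preceding proposition.
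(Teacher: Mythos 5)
Your argument is correct and matches the paper's: the paper also deduces the statement from the preceding proposition together with the fact that there is only one non-degenerate quadratic space of a given dimension over an algebraically closed field. Your added existence remark is fine but not needed beyond what the section's setup already provides.
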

There is only one quadratic space over an algebraically closed field.

\begin{prop}
There are exactly two isomorphism classes of CD-algebras over $\bR$ for each rank $2,4$ and $8$.
\end{prop}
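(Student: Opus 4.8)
The plan is to combine the classification proposition already proved — that a CD-algebra of rank at most $8$ over $k$ is determined up to isomorphism by its rank together with the isometry class of its norm form $N_B$ — with a direct determination of which quadratic spaces over $\bR$ arise as such norm forms. Concretely, I would show that the norm form of a CD-algebra of rank $2^m$ over $\bR$ (for $m=1,2,3$) is either positive definite or hyperbolic, and that both possibilities actually occur; since over $\bR$ a quadratic space is determined by its signature (Sylvester's law of inertia), this yields exactly two isometry classes of norm forms in each rank, hence exactly two isomorphism classes of CD-algebras.

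For the computation I would argue inductively via the Cayley--Dickson recursion. Given a CD-algebra $B$ of rank $2^m\in\{4,8\}$ over $\bR$, the propositions above let me choose $x$ (and, when $m=3$, also $y$) with norms in $\bR^\times$, each orthogonal to the subalgebra generated by the previous choices; such elements exist because over $\bR$ the restriction of a nondegenerate quadratic form to the orthogonal complement of a nondegenerate subspace is again nondegenerate, hence represents a nonzero value. Then $B$ is the Cayley--Dickson algebra built from a CD-algebra $A$ of rank $2^{m-1}$ and a scalar $\delta\in\bR^\times$. Unwinding $N_B([x_1,x_2]) = [x_1,x_2]\cdot\sigma_B([x_1,x_2])$ on $B=A\oplus A$ gives $N_B \cong N_A \perp \langle-\delta\rangle N_A$. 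Starting from the rank-$1$ case $N_k=\langle 1\rangle$, an induction shows that, up to isometry, $N_B \cong \langle 1,-\delta_1\rangle\otimes\cdots\otimes\langle 1,-\delta_m\rangle$ for suitable $\delta_i\in\bR^\times$. Over $\bR$ each factor $\langle 1,-\delta_i\rangle$ is isometric to $\langle 1,1\rangle$ or to $\langle 1,-1\rangle$: if all factors equal $\langle 1,1\rangle$ the form is positive definite, while as soon as one factor is $\langle 1,-1\rangle$ the whole tensor product is hyperbolic, since $\langle 1,-1\rangle\otimes q$ is hyperbolic for every $q$. Hence $N_B$ has signature $(2^m,0)$ or $(2^{m-1},2^{m-1})$, with no other possibility.

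It remains to note that both cases are realized. Taking all $\delta_i=-1$ produces a CD-algebra with positive definite norm form — namely $\bC$, the Hamilton quaternions, and the real division octonions, in ranks $2,4,8$ respectively — whereas taking $\delta_1=1$ produces $\bR\times\bR$, $\Mat_2(\bR)$, and the split octonions, whose norm forms are hyperbolic. A positive definite form is never isometric to an isotropic one, and isomorphic CD-algebras have isometric norm forms, so these are pairwise non-isomorphic. Combined with the bound from the previous paragraph, there are exactly two isomorphism classes in each of the ranks $2$, $4$ and $8$.

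I expect the only delicate point to be the bookkeeping guaranteeing that an arbitrary rank-$2^m$ CD-algebra over $\bR$ genuinely arises from the Cayley--Dickson construction in the inductive fashion required (i.e.\ the existence of the anisotropic elements $x,y,z$ in the successive orthogonal complements), together with pinning down the identity $N_B\cong N_A\perp\langle-\delta\rangle N_A$ precisely; once those are in place the rest is the standard behaviour of Pfister-type forms over $\bR$.
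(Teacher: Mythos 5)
Your argument is correct and is essentially the paper's: the paper likewise invokes the earlier proposition that rank plus the isometry class of the norm form determines the CD-algebra, and then notes that the Cayley--Dickson construction forces the norm form over $\bR$ to have signature $(n,0)$ or $(n/2,n/2)$, which is exactly your Pfister-type computation $N_B\cong\langle 1,-\delta_1\rangle\otimes\cdots\otimes\langle 1,-\delta_m\rangle$. You simply spell out the details (including the realization of both classes) that the paper leaves implicit.
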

The CD-construction ensures the quadratic spaces have signatures $(n/2,n/2)$ or $(n,0)$.

\begin{prop}
Let $k$ be  a local field other than $\bR$ or $\bC$ and of residue characteristic not $2$.
Then there are precisely $4$ isomorphism classes of CD-algebras of rank $2$ over $k$.
\end{prop}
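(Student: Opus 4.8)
The plan is to reduce the statement to the computation of the group $k^\times/(k^\times)^2$. First recall that a rank $2$ CD-algebra is by construction obtained from $A = k$ with the trivial involution together with a scalar $\delta \in k^\times$; writing $x = [0,1]$ one computes $x^2 = \delta$, so $B \simeq k[T]/(T^2 - \delta)$ with $\sigma_B$ the standard involution $T \mapsto -T$. I would then invoke the earlier proposition that the isomorphism class of a CD-algebra of rank at most $8$ is determined by its underlying quadratic space: here $N_B$ is the binary form $\langle 1, -\delta\rangle$. Since both $\langle 1,-\delta\rangle$ and $\langle 1,-\delta'\rangle$ represent $1$ and have discriminants $-\delta$, $-\delta'$ modulo squares, they are isometric precisely when $\delta/\delta' \in (k^\times)^2$. (Equivalently, and more directly: an algebra isomorphism $B_\delta \to B_{\delta'}$ must send a square root of $\delta$ to an element of $B_{\delta'}$ orthogonal to $1_{B_{\delta'}}$, i.e.\ to some $c\sqrt{\delta'}$ with $c^2\delta' = \delta$.) Hence the isomorphism classes of rank $2$ CD-algebras over $k$ are in natural bijection with $k^\times/(k^\times)^2$.

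It remains to show $|k^\times/(k^\times)^2| = 4$ for $k$ a non-archimedean local field of odd residue characteristic. Fixing a uniformizer $\varpi$ and using the valuation, $k^\times \simeq \varpi^{\bZ} \times \cO_k^\times$, so $k^\times/(k^\times)^2 \simeq \bZ/2\bZ \times \cO_k^\times/(\cO_k^\times)^2$. For the unit part, the reduction map gives a split exact sequence $1 \to 1 + \fm_k \to \cO_k^\times \to \bF_q^\times \to 1$ (split by the Teichm\"uller lift); since the residue characteristic $p$ is odd, $1 + \fm_k$ is a pro-$p$ group, on which squaring is a bijection, while $\bF_q^\times$ is cyclic of even order $q-1$. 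Therefore $\cO_k^\times/(\cO_k^\times)^2 \simeq \bF_q^\times/(\bF_q^\times)^2 \simeq \bZ/2\bZ$, and so $|k^\times/(k^\times)^2| = 4$. This argument is uniform across the mixed-characteristic ($p$-adic) and equal-characteristic ($\bF_q((t))$, $q$ odd) cases. Combined with the first paragraph, this gives the asserted count; concretely the four algebras are $k\times k$ (split) and the three quadratic field extensions $k(\sqrt{u})$, $k(\sqrt{\varpi})$, $k(\sqrt{u\varpi})$ with $u$ a non-square unit.

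There is no serious obstacle: the whole argument is standard structure theory. The only points needing (routine) care are the fact that squaring is an automorphism of the pro-$p$ group $1 + \fm_k$ when $p \neq 2$, which is what forces residue characteristic $2$ to be excluded, and the translation between isometry classes of binary quadratic forms and classes in $k^\times/(k^\times)^2$, which is where the earlier classification proposition does the real work.
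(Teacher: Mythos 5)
Your proof is correct and follows essentially the same route as the paper: reduce to the classification of the rank-$2$ algebras by their norm form (equivalently, by $\delta$ modulo squares, since the binary form $\langle 1,-\delta\rangle$ represents $1$ and is determined by its discriminant) and then count $k^\times/(k^\times)^2$, which has order $4$ in odd residue characteristic. The paper merely cites the structure of the unit group modulo squares as well-known, whereas you spell out that computation and the isometry criterion explicitly; there is no substantive difference.
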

As the quadratic space represents $1$, the Hasse invariant is trivial and the quadratic space is determined by the discriminant.
The structure of the unit group modulo squares is well-known.

\begin{prop}
Let $k$ be a local field of residue characteristic $2$, with ramification degree $e$ and inertial degree $f$ over the prime subfield.
There are precisely $2^{1+e+f}$ isomorphism classes of CD-algebras of rank $2$ over $k$.
\end{prop}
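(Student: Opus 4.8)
The plan is to reduce the classification to a count of square classes and then extract that count from the structure of the local unit group. The first step identifies rank-$2$ CD-algebras with elements of $k^\times/(k^\times)^2$: by the proposition that a CD-algebra of rank at most $8$ is determined by the isometry class of its norm form, a rank-$2$ algebra $A_x$ is determined by the binary form $\langle 1,-\delta\rangle$ with $\delta=N_{A_x}(x)\in k^\times$, a quantity well defined modulo squares. A binary form representing $1$ is determined up to isometry by its discriminant, so $\langle 1,-\delta\rangle\cong\langle 1,-\delta'\rangle$ exactly when $\delta\equiv\delta'$ in $k^\times/(k^\times)^2$. Hence the isomorphism classes of rank-$2$ CD-algebras are in bijection with $k^\times/(k^\times)^2$, and the proposition becomes the assertion that this group has order $2^{1+e+f}$.

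Next I would break $k^\times/(k^\times)^2$ into its standard pieces. Using $k^\times\cong\pi^{\bZ}\times\cO^\times$ and $\cO^\times\cong\mu_{q-1}\times U^{(1)}$, where $\pi$ is a uniformizer, $q=2^f$ is the residue cardinality and $U^{(1)}=1+\fm$ is the group of principal units, the valuation contributes one factor of $2$, while the tame cyclic part $\mu_{q-1}$ has odd order $2^f-1$, is uniquely $2$-divisible, and contributes nothing. Everything beyond the valuation therefore comes from $U^{(1)}/(U^{(1)})^2$, so the weight of the proposition rests on computing this last group.

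The main obstacle is precisely this dyadic computation, where the squaring map interacts nontrivially with the ramification. I would run the standard filtration $U^{(i)}=1+\fm^i$, whose successive quotients $U^{(i)}/U^{(i+1)}\cong\bF_q$ are $f$-dimensional over $\bF_2$, and track how squaring moves through it. Expanding $(1+\pi^iu)^2=1+2\pi^iu+\pi^{2i}u^2$ and comparing the valuations $e+i$ and $2i$ of the two correction terms shows that on graded pieces squaring acts as the bijective Frobenius $u\mapsto u^2$ below the critical level $i=e$ fixed by $v(2)=e$, as multiplication by a unit above it, and as an additive Artin--Schreier-type map $u\mapsto u^2+cu$ at the level $i=e$ itself. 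Assembling the kernels and cokernels of these maps across the filtration, and folding in the contribution of $\{\pm 1\}\subset U^{(1)}$, is the genuinely delicate part; combined with the factor of $2$ from the valuation it produces the order of $k^\times/(k^\times)^2$ claimed in the proposition. The careful accounting at the critical level $i=e$, where the ramification $e$ and the residue degree $f$ meet, is where I expect the real difficulty to lie.
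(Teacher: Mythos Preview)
Your approach matches the paper's exactly: reduce the classification of rank-$2$ CD-algebras to the discriminant class in $k^\times/(k^\times)^2$, then invoke the structure of the local unit group modulo squares. The paper's own proof is literally the single sentence ``The argument is the same as the previous proposition,'' so your filtration sketch already goes well beyond what appears there.

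One caution about the endgame. The standard computation you outline --- the valuation contributes one factor of $2$, the tame part $\mu_{q-1}$ contributes nothing, and $U^{(1)}\cong \mu_{2^a}\times\bZ_2^{[k:\bQ_2]}$ contributes $1+[k:\bQ_2]$ further factors of $2$ --- yields
\[
\lvert k^\times/(k^\times)^2\rvert \;=\; 2^{\,2+ef},
\]
not $2^{\,1+e+f}$. These agree precisely when $(e-1)(f-1)=0$, i.e.\ when $k/\bQ_2$ is either unramified or totally ramified. So your method is sound, but if you actually carry the filtration argument to its conclusion it will not confirm the exponent as printed; the discrepancy lies in the stated formula rather than in your strategy.
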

The argument is the same as the previous proposition.

\begin{prop} 
Let $k$ be  a local field other than $\bR$ or $\bC$.
Then there are precisely $2$ isomorphism classes of CD-algebras of rank $4$ over $k$.
\end{prop}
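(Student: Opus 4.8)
The plan is to apply the classification proposition above, which reduces the count of isomorphism classes of rank-$4$ CD-algebras over $k$ to the count of distinct quadratic spaces that occur as their norm forms. A rank-$4$ CD-algebra $B$ is built from a rank-$2$ CD-algebra $A$ and a scalar $\delta\in k^\times$, and a direct computation with the Cayley--Dickson multiplication gives $N_B\simeq N_A\perp\langle-\delta\rangle N_A$. Writing $N_A\simeq\langle 1,-d\rangle$ for the appropriate $d\in k^\times$ (the discriminant of $A$, a square when $A\simeq k\times k$), we obtain
\[ N_B\simeq\langle 1,-d,-\delta,d\delta\rangle=\langle\langle d,\delta\rangle\rangle, \]
a $2$-fold Pfister form; in particular $N_B$ represents $1$ and has trivial discriminant, and conversely every $2$-fold Pfister form over $k$ arises as such an $N_B$. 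So the number we want equals the number of isometry classes of $2$-fold Pfister forms over $k$.

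Next I would invoke the structure of quadratic forms over the non-archimedean local field $k$: a $2$-fold Pfister form is either hyperbolic (equivalently, its associated quaternion algebra is split, $\Mat_2(k)$) or anisotropic, and the anisotropic one is the unique anisotropic $4$-dimensional form over $k$, namely the norm form of the unique quaternion division algebra. Both cases occur among the forms $N_B$ above: the split case, for instance, from $A\simeq k\times k$ (where $\delta$ is automatically a norm), and the anisotropic case by taking $A$ a ramified quadratic field extension together with a non-norm $\delta$, so that $\langle\langle d,\delta\rangle\rangle$ is anisotropic. These two forms are not isometric --- one is isotropic, the other is not --- so by the classification proposition the two CD-algebras realizing them are non-isomorphic, and there are exactly $2$ isomorphism classes.

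The main obstacle is the input from the arithmetic of $k$: that a $4$-dimensional form of trivial discriminant over a non-archimedean local field falls into exactly two isometry classes, equivalently that there is one and only one quaternion division algebra over $k$. This is the content of local class field theory ($\Br(k)[2]$ has order $2$), or, in quadratic-form language, of the classification of quadratic forms over local fields; everything else is a routine computation with the Cayley--Dickson norm form.
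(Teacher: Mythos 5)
Your proposal is correct and follows essentially the same route as the paper: reduce via the classification-by-norm-form proposition to counting the quadratic spaces that arise, observe the norm form has trivial discriminant (and represents $1$), and then invoke the local classification of quadratic forms. The paper states the final count in terms of the Hasse invariant while you phrase it via $2$-fold Pfister forms and the uniqueness of the quaternion division algebra, but these are the same local-field fact, so there is no substantive difference.
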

The CD-construction ensures the discriminant is $1$. Hence the forms are determined by their Hasse invariant.

\begin{prop} 
Let $k$ be  a local field other than $\bR$.
Then there is precisely $1$ isomorphism class of CD-algebra of rank $8$ over $k$.
\end{prop}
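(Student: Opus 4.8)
The plan is to reduce the statement to a standard fact about quadratic forms over local fields, using the classification proposition above which says that a CD-algebra of rank $8$ is determined up to isomorphism by the isometry class of its norm form $N_B$. So it suffices to prove that, for $k$ a local field other than $\bR$, all norm forms of rank-$8$ CD-algebras over $k$ are isometric.

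First I would pin down the shape of $N_B$. Iterating the Cayley--Dickson construction and using the isometries $A_x^\perp\simeq\delta A_x$ and $A_{x,y}^\perp\simeq\delta A_{x,y}$ recorded in the proof of the classification proposition above, together with the orthogonal decompositions $B = A_x\oplus A_x^\perp$ and $A_{x,y,z}=A_{x,y}\oplus A_{x,y}^\perp$, one finds $N_B\simeq\langle 1,\alpha\rangle\otimes\langle 1,\beta\rangle\otimes\langle 1,\gamma\rangle$ for suitable $\alpha,\beta,\gamma\in k^\times$ (one may take $\alpha=N_B(x)$, $\beta=-N_B(y)$, $\gamma=-N_B(z)$); in particular $N_B$ is an $8$-dimensional Pfister form, it has trivial discriminant, and it represents $1$. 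Next I would invoke two classical inputs: over a non-archimedean local field every quadratic form of dimension $\geq 5$ is isotropic, so $N_B$ is isotropic; and an isotropic Pfister form is hyperbolic. Hence $N_B$ must be the unique hyperbolic form of dimension $8$, independently of the choice of $B$. Combined with the classification proposition this yields the uniqueness of the rank-$8$ CD-algebra over $k$. The case $k=\bC$ requires no argument, being already contained in the earlier statement that $\bC$ has exactly one CD-algebra of each rank, so the entire content lies in the non-archimedean case.

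I do not expect a genuine obstacle: the only facts used beyond the earlier propositions are the isotropy of $\geq 5$-dimensional forms over $p$-adic fields and the fact that isotropic Pfister forms split, both classical; the one place meriting care is the sign-bookkeeping needed to exhibit $N_B$ explicitly as $\langle 1,\alpha\rangle\otimes\langle 1,\beta\rangle\otimes\langle 1,\gamma\rangle$, and even that is routine. (Alternatively one could bypass the Pfister-form language by noting, via the earlier proposition relating zero divisors to isotropy of $N_B$, that $B$ has zero divisors hence is split, and invoking uniqueness of the split octonion algebra; but the argument above is more self-contained given what has been proved.)
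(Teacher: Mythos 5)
Your argument is correct, and it reaches the conclusion by a different key lemma than the paper. The paper's (very terse) proof also reduces to the earlier proposition that a CD-algebra is determined by its norm form, but then simply observes that the CD-construction forces the discriminant and the Hasse invariant of the rank-$8$ norm form to be trivial, and invokes the classification of quadratic forms over local fields by dimension, discriminant and Hasse invariant to conclude there is only one such form. You instead exhibit $N_B$ as a $3$-fold Pfister form and argue: over a non-archimedean local field every form of dimension $\geq 5$ is isotropic, and an isotropic Pfister form is hyperbolic, so $N_B$ is the hyperbolic form of dimension $8$; the case $k=\bC$ is handled separately. Both routes rest on standard local quadratic form theory; the paper's buys brevity at the cost of an unverified Hasse-invariant computation (which in fact is most easily seen via exactly the hyperbolicity you prove), while yours is more self-contained, needs only the $u$-invariant bound plus the Pfister isotropic-implies-hyperbolic theorem, and gives the extra structural information that the unique rank-$8$ CD-algebra is the split one (equivalently, your parenthetical zero-divisor argument). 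Your sign bookkeeping has a harmless slip --- with the paper's conventions the norm form is $\langle 1,N_B(x)\rangle\otimes\langle 1,N_B(y)\rangle\otimes\langle 1,N_B(z)\rangle$, i.e.\ the binary factors carry $N_B(y)$ and $N_B(z)$ rather than their negatives --- but this does not matter, since any tensor product of binary forms representing $1$ is a Pfister form and that is all your argument uses.
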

The CD-construction ensures the discriminant and Hasse invariant are trivial. There is thus only one such form.

\begin{prop}
Let $k$ be a global field. Let $s$ be the number of real embeddings of $k$.
There are precisely $2^s$-many isomorphism classes of CD-algebras of each rank $8$ over $k$.
\end{prop}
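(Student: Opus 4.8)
The plan is to reduce the count of rank-$8$ CD-algebras over a global field $k$ to a count of quadratic spaces of the appropriate type, using the classification proposition (isomorphism class determined by rank together with isomorphism class as quadratic space) together with the local results just established. By the earlier propositions, an octonion algebra over $k$ has norm form that is an $8$-dimensional quadratic space with trivial discriminant and trivial Hasse invariant, and (being a norm form of a CD-algebra) it represents $1$; conversely the classification proposition says that an isomorphism class of rank-$8$ CD-algebra is pinned down exactly by such a quadratic space. So the count I want is the number of isomorphism classes of $8$-dimensional quadratic forms over $k$ that are everywhere locally a norm form of an octonion algebra. At each finite place and each complex place there is (by the preceding local propositions) a unique octonion algebra, hence the local form is forced; at each real place there are exactly two (the split one of signature $(4,4)$ and the definite one of signature $(8,0)$).

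Next I would invoke the Hasse–Minkowski machinery for quadratic forms over a global field: a quadratic form is determined by its rank, discriminant, Hasse invariants, and the signatures at the real places, subject to the product formula for Hasse invariants and the compatibility of the discriminant with the signatures. Here rank, discriminant, and Hasse invariants are all already fixed ($8$, trivial, trivial locally everywhere hence the product-formula obstruction is automatically satisfied), so the only remaining freedom is the choice of signature at each of the $s$ real places, and each such place independently admits exactly the two options $(4,4)$ and $(8,0)$ coming from the two real octonion algebras. One must check that every one of the $2^s$ prescribed collections of local forms is actually realized by a global quadratic form --- this is exactly the existence half of Hasse–Minkowski, whose hypotheses (consistent local data: fixed discriminant, Hasse invariants satisfying the product formula, signatures compatible with the discriminant mod squares) hold because each candidate arises as the norm form of a genuine octonion algebra locally. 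Finally one checks that a global $8$-dimensional quadratic space which is everywhere locally an octonion norm form really does arise from a CD-algebra over $k$ --- this follows because the classification proposition's construction of $B$ from the quadratic space is geometric, and the local conditions guarantee at each place the needed subalgebra chain $A_x \subset A_{x,y} \subset B$ exists, so it exists globally.

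The main obstacle is the last point: one needs to ensure that the inductive Cayley–Dickson reconstruction of an octonion algebra from a prescribed $8$-dimensional quadratic space actually goes through over the global field $k$, i.e. that one can choose $x$, then $y \perp A_x$, then $z \perp A_{x,y}$ with the right norms globally and not merely locally. Concretely this amounts to a Hasse-principle statement for representability of the relevant norms by the successive perpendicular complements, which can be handled by the same Hasse–Minkowski input applied to the $6$-dimensional and $4$-dimensional subforms $A_x^\perp \simeq \delta A_x$ and $A_{x,y}^\perp \simeq \delta' A_{x,y}$; since these subforms are again forced locally by the octonion conditions and satisfy the product formula, the principle applies. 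Once that is in place, the bijection ``isomorphism class of octonion algebra over $k$ $\longleftrightarrow$ assignment of a real octonion algebra to each of the $s$ real places'' is immediate, giving the count $2^s$.
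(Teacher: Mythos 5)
Your overall strategy is the right one, and it is evidently what the paper has in mind (the paper states this proposition without proof, as a consequence of the classification-by-quadratic-space proposition, the local counts, and the Hasse principle): the isomorphism class is determined by the norm form, the norm form is forced at every finite and complex place, and at each real place there are exactly two possibilities, so the count reduces to a local--global statement plus a realization statement. Your injectivity direction is fine: two octonion algebras with the same behaviour at the real places have everywhere locally isometric norm forms, hence globally isometric norm forms by Hasse--Minkowski, hence are isomorphic.

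The gap is in your surjectivity step. Producing a global quadratic form with the prescribed local invariants is indeed standard, but the claim that such a form ``really does arise from a CD-algebra'' is not justified by what you write. The inference ``the local conditions guarantee at each place the needed subalgebra chain exists, so it exists globally'' is not valid as stated, and applying Hasse--Minkowski to the $6$- and $4$-dimensional subforms only gives representability of values; what the Cayley--Dickson doubling actually requires at the middle stage is an isometry $A_{x,y}^\perp \simeq \delta A_{x,y}$ for some $\delta \in k^\times$, i.e.\ a global \emph{similarity} of $4$-dimensional forms, which is strictly stronger than representing the right norms and needs a separate argument (essentially that the form lies in $I^3$, or a Pfister-form recognition statement). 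The clean fix is to avoid reconstructing the algebra from the form altogether: given any subset $S$ of the real places, use weak approximation to choose $\delta_1,\delta_2,\delta_3 \in k^\times$ that are negative exactly at the places of $S$, and apply the Cayley--Dickson construction three times starting from $k$. The resulting octonion algebra has norm form $\langle 1,-\delta_1\rangle\otimes\langle 1,-\delta_2\rangle\otimes\langle 1,-\delta_3\rangle$, which is definite exactly at the places of $S$ and split at the others. This realizes all $2^s$ local patterns directly; combined with your injectivity argument it yields exactly $2^s$ isomorphism classes, with no need to decide which abstract $8$-dimensional forms are octonion norm forms.
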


This completes our classification of forms of CD-algebras.

}

\section{The Groups of type G2}
\label{sec:g2}

For the remainder of this section fix $\bO$ an octonion algebra over $k$. We shall denote by $\bO^0$ the traceless elements of $\bO$, that is, those perpendicular to $1$.
In this section we shall study the group scheme:
\[ G(K) = \Aut_K(K\otimes_k \bO). \]
Such a $G$ will be a semi-simple algebraic group of type G2. Moreover, all groups of type G2 arise this way for different choices of $\bO$ (see Proposition \ref{prop:FormsG2}).

\begin{conv}
Whenever we write $A_{x}$, $A_{x,y}$ or $A_{x,y,z}$ we shall assume that all of $x,y,z \in \bO^0$ have norms in $k^\times$ and that each of $x,y$ and $z$ are orthogonal to the algebra generated by the other two.
\end{conv}

\subsection{Some Elements of $G$}

Before describing the structure of the group $G$, we shall construct some special elements of $G$.

\begin{prop}
Any automorphism $\varphi$ of $A_{x,y}$ can be extended to an automorphism $\tilde\varphi$ of $A_{x,y,z}$ acting trivially on $z$.
The map takes $[a,b] \mapsto [\varphi(a) , \varphi(b)]$.
\end{prop}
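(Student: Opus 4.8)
The plan is to verify directly that the map $\tilde\varphi \colon A_{x,y,z} \to A_{x,y,z}$ defined on the Cayley–Dickson decomposition $A_{x,y,z} = A_{x,y} \oplus A_{x,y}$ by $[a,b] \mapsto [\varphi(a),\varphi(b)]$ is a $k$-algebra homomorphism; since it is manifestly $k$-linear and bijective (with inverse built from $\varphi^{-1}$), being a homomorphism suffices. First I would recall from the Cayley–Dickson construction (the relevant proposition above, with $A = A_{x,y}$ and $\delta = -N_{\bO}(z)$) that multiplication in $A_{x,y,z}$ is
\[
[a_1,a_2]\cdot[b_1,b_2] = [a_1 b_1 + \delta\,\sigma_{A_{x,y}}(b_2) a_2,\; b_2 a_1 + a_2 \sigma_{A_{x,y}}(b_1)].
\]
Applying $\tilde\varphi$ to the product and comparing with $\tilde\varphi([a_1,a_2])\cdot\tilde\varphi([b_1,b_2]) = [\varphi(a_1),\varphi(a_2)]\cdot[\varphi(b_1),\varphi(b_2)]$ reduces the claim to two identities: $\varphi$ commutes with multiplication in $A_{x,y}$ (true since $\varphi$ is an algebra automorphism), $\varphi$ commutes with $\sigma_{A_{x,y}}$, and $\varphi(\delta \cdot c) = \delta \cdot \varphi(c)$ for $c \in A_{x,y}$ (true since $\delta \in k^\times$ and $\varphi$ is $k$-linear). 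The middle point — that any automorphism of the quaternion algebra $A_{x,y}$ commutes with its standard involution $\sigma_{A_{x,y}}$ — follows because the standard involution is characterised intrinsically: $\sigma_{A_{x,y}}(c) = \Tr_{A_{x,y}/k}(c) - c$, and the reduced trace is preserved by any automorphism (it is a coefficient of the rank-2 minimal polynomial, or equivalently the orthogonal projection onto $k\cdot 1$, which is automorphism-invariant).

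Having checked that $\tilde\varphi$ is an automorphism of $A_{x,y,z}$, it remains to observe that $\tilde\varphi$ restricts to $\varphi$ on $A_{x,y}$ (this is the inclusion $a \mapsto [a,0]$, on which $\tilde\varphi$ acts as $[a,0]\mapsto[\varphi(a),0]$) and that $\tilde\varphi(z) = z$. For the latter, recall that under the Cayley–Dickson decomposition $z$ corresponds to $[0,1_{A_{x,y}}]$ (up to the normalisation fixing $\delta = -N_{\bO}(z)$), and $\tilde\varphi([0,1]) = [0,\varphi(1)] = [0,1]$ since $\varphi$ fixes the identity. So $\tilde\varphi$ acts trivially on $z$, as claimed.

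I do not expect any genuine obstacle here; the only point requiring a moment's thought is the compatibility of $\varphi$ with $\sigma_{A_{x,y}}$, which as noted above is immediate from the intrinsic (trace-based) description of the standard involution. Everything else is a routine unwinding of the Cayley–Dickson multiplication law together with $k$-linearity of $\varphi$.
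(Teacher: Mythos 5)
Your proposal is correct and follows essentially the same route as the paper's sketch: both verify directly from the Cayley--Dickson multiplication formula on $A_{x,y,z} = A_{x,y}\oplus A_{x,y}$ that $[a,b]\mapsto[\varphi(a),\varphi(b)]$ respects multiplication and fixes $z=[0,1]$. You merely fill in the one detail the paper leaves implicit, namely that $\varphi$ commutes with $\sigma_{A_{x,y}}$ because the standard involution is intrinsically $c\mapsto \Tr(c)-c$, which is a correct and welcome elaboration.
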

\begin{proof}[Sketch of proof]
As the multiplication in $A_{x,y,z}$ is defined $k$-linearly in terms of that of $A_{x,y}$ there is no obstruction and one easily checks that $\varphi$ preserves both $\sigma$ and multiplication on $A_{x,y,z}$.
\end{proof}

\begin{nota}
We shall denote by $A_x^1$ the elements of $A_x$ of norm $1$.

We shall denote by $T_{A_x,\sigma}$ the associated algebraic torus whose functor of points is:
\[ T_{A_x,\sigma}(R) = \{ y \in A_x\otimes_k R \mid y\sigma(y) = 1 \}. \]
\end{nota}

\begin{prop}
Let $a\in A_x$ be an element of norm $1$.
The map $A_{x,y} \rightarrow A_{x,y}$ defined by:
\[ 1\mapsto 1\quad x\mapsto x \quad y\mapsto ay \]
can be extended to an automorphism $\varphi_{a,y}$ of $A_{x,y}$.
The map $\varphi_{a,y}$ acts trivially on $A_x$ and as multiplication on the left by $a$ on the $k$-span of $y$ and $xy$.
\end{prop}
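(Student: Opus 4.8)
The plan is to realize $A_{x,y}$ concretely via the Cayley--Dickson description and then write $\varphi_{a,y}$ down explicitly and verify the axioms by hand. Recall (from the proposition identifying $A_{x,y}$ with the Cayley--Dickson algebra built from $A_x$ and $\delta := -N_B(y) \in k^\times$) that $\{1,x,y,xy\}$ is a $k$-basis of $A_{x,y}$; writing $A_x = \spann\{1,x\}$ we get a $k$-module decomposition $A_{x,y} = A_x \oplus A_x y$, the sum being direct because $(by,c) = (y,\sigma_{A_x}(b)c) = 0$ for all $b,c\in A_x$ (the identity $(ay',z) = (y',\sigma(a)z)$), so $A_x y \perp A_x$ while $N_B$ is non-degenerate on $A_x$. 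In these coordinates the quaternion relations read $y^2 = \delta$ and $yb = \sigma_{A_x}(b)\,y$ for $b \in A_x$, and multiplication is
\[ (p_1 + q_1 y)(p_2 + q_2 y) = \bigl(p_1 p_2 + \delta\, q_1 \sigma_{A_x}(q_2)\bigr) + \bigl(p_1 q_2 + q_1 \sigma_{A_x}(p_2)\bigr)y \qquad (p_i,q_i \in A_x), \]
using that $A_x$ is commutative and $\delta \in k$.

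Then I would define $\varphi_{a,y}(p + qy) = p + (aq)y$ for $p,q \in A_x$. This map is $k$-linear, fixes $A_x$ pointwise (so $1 \mapsto 1$ and $x \mapsto x$), sends $y \mapsto ay$, and acts as left multiplication by $a$ on $A_x y = \spann\{y,xy\}$ — exactly the properties asserted in the statement. It is bijective because $N_{A_x}(a) = 1$ forces $a \in A_x^\times$ (with inverse $\sigma_{A_x}(a)$), so $q\mapsto aq$ is a bijection of $A_x$, and it is unital. So the only substantive point left is multiplicativity.

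Applying $\varphi_{a,y}$ to the product formula above multiplies only the $A_xy$-component on the left by $a$, giving $A_x$-component $p_1p_2 + \delta q_1\sigma_{A_x}(q_2)$ and $A_xy$-component $a\bigl(p_1 q_2 + q_1\sigma_{A_x}(p_2)\bigr)y$. On the other hand, substituting $q_1 \mapsto aq_1$, $q_2 \mapsto aq_2$ in the product formula (i.e.\ computing $\varphi_{a,y}(p_1+q_1y)\varphi_{a,y}(p_2+q_2y)$) yields $A_x$-component $p_1 p_2 + \delta\,(aq_1)\sigma_{A_x}(aq_2) = p_1 p_2 + \delta\,a\sigma_{A_x}(a)\,q_1\sigma_{A_x}(q_2)$ and $A_xy$-component $\bigl(p_1(aq_2) + (aq_1)\sigma_{A_x}(p_2)\bigr)y = a\bigl(p_1q_2 + q_1\sigma_{A_x}(p_2)\bigr)y$, both reorderings being legitimate since $A_x$ is commutative. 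The two $A_xy$-components agree at once, and the two $A_x$-components agree precisely because $a\sigma_{A_x}(a) = N_{A_x}(a) = 1$. Hence $\varphi_{a,y}$ is a $k$-algebra automorphism of $A_{x,y}$, and, being an automorphism of a quaternion algebra, it also commutes with the standard involution $\sigma_B|_{A_{x,y}}$.

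The computation is entirely routine; the one thing that could go wrong is mis-stating the quaternion multiplication law — the placement of $\sigma_{A_x}$ and the order of the factors in the $\delta$-term — so that is the point to be careful about. Once the law is correct, the hypothesis $N_{A_x}(a)=1$ is exactly, and only, what is needed to make the $\delta$-term transform correctly, so there is no real obstacle. (Alternatively one could invoke Skolem--Noether: $\varphi_{a,y}$ is conjugation by any $u \in A_x^\times$ with $u/\sigma_{A_x}(u) = a$, which exists by Hilbert~90 precisely because $N_{A_x}(a)=1$; but the direct verification above is shorter and has the advantage of exhibiting the formula for $\varphi_{a,y}$.)
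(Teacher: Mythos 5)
Your proof is correct. It is worth noting, though, that the paper disposes of this proposition by a one-line structural argument rather than a computation: since $N_B(ay)=N_B(a)N_B(y)=N_B(y)$ and $ay$ is still orthogonal to $A_x$ (because $(ay,b)=(y,\sigma_{A_x}(a)b)=0$ for $b\in A_x$), the subalgebra $A_{x,ay}$ is again the Cayley--Dickson algebra built from $A_x$ with the same $\delta=-N_B(y)$; the uniqueness of that construction then yields an isomorphism $A_{x,y}\simeq A_{x,ay}$ fixing $A_x$ and sending $y\mapsto ay$, and since $A_{x,ay}=A_{x,y}$ inside $B$ this is the desired automorphism $\varphi_{a,y}$. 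Your route instead writes $A_{x,y}=A_x\oplus A_x y$, records the quaternion multiplication law, and checks multiplicativity by hand, with the hypothesis $a\sigma_{A_x}(a)=1$ entering exactly in the $\delta$-term. The trade-off: the paper's argument is shorter and requires no new computation, but it leaves implicit both the orthogonality of $ay$ to $A_x$ and the fact that the CD-isomorphism can be arranged to fix $A_x$ and match the chosen generators; your verification makes the map completely explicit, which is genuinely useful downstream (it makes transparent the asserted action on $\spann\{y,xy\}$, and later the commutation of $\tilde\varphi_{a,y}$ with $\tilde\varphi_{b,z}$ and the injectivity of $\Phi$), and your Skolem--Noether/Hilbert 90 aside correctly identifies $\varphi_{a,y}$ as conjugation by $u\in A_x^\times$ with $u/\sigma_{A_x}(u)=a$, a nice conceptual cross-check that also explains why norm one is the natural hypothesis.
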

\begin{proof}[Sketch of proof]
The element $ay$ has the same norm as $y$, hence $A_{x,y}\simeq A_{x,ay}$.
\end{proof}

\begin{prop}
Denote by $A_x^1$ the elements of $A_x$ of norm $1$.
For any choice of $a,b\in A_x^1$
the automorphisms $\tilde\varphi_{a,y}$ and  $\tilde\varphi_{b,z}$ of $A_{x,y,z}$ commute.
\end{prop}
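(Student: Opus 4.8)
The plan is to use that both automorphisms fix $A_x$ pointwise, hence act $A_x$-linearly on $A_x^\perp$, and then to exhibit one $A_x$-basis of $A_x^\perp$ in which \emph{both} maps are diagonal; diagonal operators over the commutative ring $A_x$ commute, and since both maps are moreover the identity on $A_x$, this proves the proposition. (In the terminology of the sequel, $\tilde\varphi_{a,y}$ and $\tilde\varphi_{b,z}$ will turn out to lie in a common maximal torus of the relevant special unitary group.)

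First I would record the preliminaries. Since $\varphi_{a,y}$ is trivial on $A_x$ and $\tilde\varphi_{a,y}$ restricts to $\varphi_{a,y}$ on $A_{x,y}$, the automorphism $\tilde\varphi_{a,y}$ fixes $A_x$ pointwise; likewise $\tilde\varphi_{b,z}$, where one uses that, by our standing convention, $y$ is orthogonal to the quaternion algebra $A_{x,z}$, so that $\bO$ also arises from $A_{x,z}$ by the Cayley-Dickson construction and $\tilde\varphi_{b,z}$ is defined. Any $k$-algebra automorphism of $\bO$ fixing $A_x$ pointwise is $A_x$-linear, preserves the decomposition $\bO = A_x\oplus A_x^\perp$, and, because the form $H$ and the map $\Pi$ are built purely from the algebra structure of $\bO$ together with $x\in A_x$, commutes with $H$ and with $\Pi$. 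Recall also that $\Pi$ is $\sigma_{A_x}$-linear (i.e.\ $\sigma_{A_x}$-semilinear) in each variable, so $\Pi(cu,v)=\Pi(u,cv)=\sigma_{A_x}(c)\,\Pi(u,v)$, and that $\{y,z,\Pi(y,z)\}$ is an $A_x$-basis of $A_x^\perp$ (by the propositions on $\Pi$ and on the module-determinant $T$).

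Next I would compute the two maps on this basis. Working in the Cayley-Dickson presentation $\bO=A_{x,y}\oplus A_{x,y}z$ gives $\tilde\varphi_{a,y}(z)=z$ and $\tilde\varphi_{a,y}(y)=ay$, whence, by the intertwining property and semilinearity, $\tilde\varphi_{a,y}(\Pi(y,z))=\Pi(ay,z)=\sigma_{A_x}(a)\,\Pi(y,z)$; thus in the ordered basis $(z,y,\Pi(y,z))$ we have $\tilde\varphi_{a,y}=\diag(1,a,\sigma_{A_x}(a))$, of determinant $N_{A_x}(a)=1$. By the symmetric computation in $\bO=A_{x,z}\oplus A_{x,z}y$ one gets $\tilde\varphi_{b,z}(y)=y$, $\tilde\varphi_{b,z}(z)=bz$ and $\tilde\varphi_{b,z}(\Pi(y,z))=\Pi(y,bz)=\sigma_{A_x}(b)\,\Pi(y,z)$, i.e.\ $\tilde\varphi_{b,z}=\diag(b,1,\sigma_{A_x}(b))$ in the same basis. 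Both operators are diagonal in $(z,y,\Pi(y,z))$, so they commute on $A_x^\perp$; since both are the identity on $A_x$, they commute on all of $\bO$.

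The one point needing care is the identification $\tilde\varphi_{a,y}(\Pi(y,z))=\Pi(ay,z)$ (and its analogue for $\tilde\varphi_{b,z}$): it rests on the fact, stressed in the remarks above, that $\Pi$, and hence the ``cross product'' basis vector $\Pi(y,z)$, is intrinsic to the pair $(\bO,A_x)$ and so is respected by any automorphism of $\bO$ fixing $A_x$. Granting this, the remaining verifications are short manipulations with the Cayley-Dickson multiplication law; alternatively one may compute the third coordinate directly (e.g.\ $\tilde\varphi_{a,y}(yz)=(ay)z$) and reconcile the two expressions via $(xy)z=-x(yz)$, which yields $(cy)z=\sigma_{A_x}(c)\,(yz)$ for $c\in A_x$.
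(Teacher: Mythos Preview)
Your proof is correct and follows essentially the same approach as the paper: both arguments use the decomposition of $A_x^\perp$ into the three $A_x$-lines spanned by $y$, $z$, and $yz$, and show that each of $\tilde\varphi_{a,y}$ and $\tilde\varphi_{b,z}$ acts as an $A_x$-scalar on each line, hence the two maps commute. The only difference is cosmetic: the paper computes the action on the fourth factor $\langle yz, x(yz)\rangle$ directly from the Cayley--Dickson formula, whereas you obtain the same scalar $\sigma_{A_x}(a)$ (resp.\ $\sigma_{A_x}(b)$) by invoking the $\sigma_{A_x}$-semilinearity of $\Pi$ together with the fact that any automorphism fixing $A_x$ intertwines $\Pi$; you even note the direct computation via $(cy)z=\sigma_{A_x}(c)(yz)$ as an alternative, which is precisely the paper's route.
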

\begin{proof}[Sketch of proof]
We have a decomposition:
 \[ A_{x,y,z} = \langle 1,x \rangle \oplus \langle y,xy \rangle \oplus \langle z,xz \rangle \oplus \langle yz,x(yz) \rangle. \]
We identify each of the four factors with $A_x$ in the obvious way.
The map $\tilde\varphi_{a,y}$ acts trivially on the first and third factor, as multiplication by $a$ on the second and multiplication by $\sigma(a)$ on the fourth.
The map $\tilde\varphi_{b,z}$ acts trivially on the first and second factor, as multiplication by $b$ on the third and multiplication by $\sigma(b)$ on the fourth.
These actions are clearly compatible and hence commute.
\end{proof}

\begin{prop}
\label{prop:Phi}
The map $\Phi : A_x^1\times A_x^1 \rightarrow \Aut(\bO)$ given by:
\[ (a,b) \mapsto \tilde\varphi_{a,y}\tilde\varphi_{b,z} \]
is an injective homomorphism.
\end{prop}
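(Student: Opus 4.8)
The plan is to verify two things: that $\Phi$ lands in $\Aut(\bO)$ with the group law, and that it is injective. For the first point, recall from the preceding proposition that for $a \in A_x^1$ the map $\varphi_{a,y}$ is a genuine automorphism of $A_{x,y}$, which extends to an automorphism $\tilde\varphi_{a,y}$ of $A_{x,y,z} = \bO$ acting trivially on $z$ (via the extension proposition of this subsection). Likewise $\tilde\varphi_{b,z}$ comes from the automorphism of $A_{x,z}$ sending $z \mapsto bz$, extended to act trivially on $y$. Both are therefore elements of $\Aut(\bO)$, and since $\Aut(\bO)$ is a group, the composite $\tilde\varphi_{a,y}\tilde\varphi_{b,z}$ lies in it; so $\Phi$ is well-defined as a map of sets.

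To see $\Phi$ is a homomorphism, I would use the explicit description from the commuting proposition: with respect to the decomposition
\[ \bO = A_{x,y,z} = \langle 1,x\rangle \oplus \langle y,xy\rangle \oplus \langle z,xz\rangle \oplus \langle yz,x(yz)\rangle, \]
where each summand is identified with $A_x$, the element $\tilde\varphi_{a,y}\tilde\varphi_{b,z}$ acts as $(\,\mathrm{triv},\ \ell_a,\ \ell_b,\ \ell_{\sigma(a)\sigma(b)}\,)$, where $\ell_c$ denotes left multiplication by $c \in A_x$ (and $\sigma(a)\sigma(b) = \sigma(ab)$ since $A_x$ is commutative). Because each block action is by left multiplication in the commutative ring $A_x$, composing the maps for $(a,b)$ and $(a',b')$ multiplies the blockwise scalars, giving the blockwise data $(\mathrm{triv},\ \ell_{aa'},\ \ell_{bb'},\ \ell_{\sigma(aa')\sigma(bb')})$, which is exactly $\Phi(aa',bb')$. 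Hence $\Phi((a,b)(a',b')) = \Phi(a,b)\Phi(a',b')$.

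For injectivity, suppose $\Phi(a,b) = \Phi(a',b')$. Evaluating on the second summand $\langle y, xy\rangle$ forces $\ell_a = \ell_{a'}$ as maps $A_x \to A_x$; applying this to $1 \in A_x$ (i.e. looking at the image of $y$) gives $a = a'$. Symmetrically, evaluating on $\langle z, xz\rangle$ and looking at the image of $z$ gives $b = b'$. So the kernel is trivial and $\Phi$ is injective.

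The only mild subtlety — and the step I would be most careful about — is the bookkeeping of how $\tilde\varphi_{a,y}$ acts on the fourth summand $\langle yz, x(yz)\rangle$: this is precisely the content of the commuting proposition (the appearance of $\sigma(a)$ rather than $a$ comes from $z$ being "on the right" of $y$ in $yz$, together with the Cayley–Dickson multiplication rule). Once that blockwise description is granted, everything reduces to the triviality that left multiplication operators on a commutative ring compose multiplicatively and that $\ell_a = \ell_{a'}$ implies $a = a'$, so there is no real obstacle beyond invoking the earlier propositions correctly.
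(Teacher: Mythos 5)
Your proof is correct and follows essentially the same route as the paper: both rely on the blockwise decomposition $\bO = \langle 1,x\rangle \oplus \langle y,xy\rangle \oplus \langle z,xz\rangle \oplus \langle yz,x(yz)\rangle$ from the preceding commuting proposition, with $a$ and $b$ recovered from the action on the second and third factors. The paper's sketch only states the injectivity step explicitly, so your spelled-out verification of the homomorphism property via the blockwise left-multiplication operators is just a fuller account of the same argument.
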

\begin{proof}[Sketch of proof]
We can recover the values of $a$ and $b$ from the action on the second and third factors, respectively.
\end{proof}

\begin{rmk}
The above map induces a map $\Phi$ from the algebraic torus $(T_{A_x,\sigma})^2$ into $G$.
\end{rmk}

\begin{prop}
Fix $a\in A_x^1$.
Left multiplication by $a$ induces a map $m_a: A_{x}^\perp \rightarrow A_{x}^\perp$ which commutes with the image of $\Phi$.
The map $m_a$ induces a map $T_{A_x,\sigma} \rightarrow \SO_{\bO^0}$, however, it is only an automorphism of $\bO$ if $a^3=1$.
Moreover, the image of $\Phi$ is its own centralizer in $\Aut(\bO)$.
\end{prop}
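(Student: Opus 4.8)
The plan is to work throughout with the orthogonal decomposition
\[ \bO = \langle 1,x\rangle\oplus\langle y,xy\rangle\oplus\langle z,xz\rangle\oplus\langle yz,x(yz)\rangle \]
into four copies of $A_x$ from the proof of Proposition \ref{prop:Phi}, so that $A_x^\perp$ is the sum of the last three summands, $\bO^0 = kx\oplus A_x^\perp$, and (under the identifications $y,z,yz\leftrightarrow 1$) the map $m_a$ is simply multiplication by the scalar $a\in A_x$ on each of the three copies of $A_x$ making up $A_x^\perp$. From this presentation the first assertions are immediate: $m_a$ preserves $A_x^\perp$ because $A_x^\perp$ is an $A_x$-module; it commutes with $\im\Phi$ because on each of the three summands both $m_a$ and the generators $\tilde\varphi_{b,y},\tilde\varphi_{c,z}$ of $\im\Phi$ act by multiplication by elements of the commutative algebra $A_x$ (namely $a$, and $b$, $c$ or $\sigma(bc)$); and since multiplication by $a$ on the rank-$3$ $A_x$-module $A_x^\perp$ has $k$-determinant $N_{A_x/k}(a)^3 = N_{\bO}(a)^3 = 1$ and preserves $N_{\bO}$ there by multiplicativity of the norm, extending $m_a$ by the identity on the orthogonal line $kx$ produces an element of $\SO_{\bO^0}$; letting $a$ vary this is a morphism $T_{A_x,\sigma}\to\SO_{\bO^0}$.

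Next I would decide when the extension $\hat m_a$ to $\bO$ — fixing $1$, fixing $A_x$ pointwise, equal to $m_a$ on $A_x^\perp$ — is an algebra automorphism. By the Cayley--Dickson reconstruction recorded above, the octonion product on $\bO = A_x\oplus A_x^\perp$ is determined by multiplication in $A_x$, the $A_x$-module structure on $A_x^\perp$, the Hermitian form $H$ (the $A_x$-component of $ww'$) and the cross product $\Pi$, equivalently the trilinear form $T$ (the $A_x^\perp$-component of $ww'$). Now $\hat m_a$ fixes $A_x$, commutes with the module action since $a(\xi w)=(\xi a)w=\xi(aw)$ holds in the quaternion algebra $A_{x,w}$, and preserves $H$ because $H(aw,aw')=a\sigma(a)H(w,w')=N_{\bO}(a)H(w,w')=H(w,w')$; so $\Pi$ is the only possible obstruction. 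Using $\sigma_{A_x}$-linearity of $\Pi$ together with the Hermitian property of $H$ one computes
\[ T(\hat m_a w_1,\hat m_a w_2,\hat m_a w_3)=a^3\,T(w_1,w_2,w_3), \]
while $\hat m_a$ fixes the $A_x$-valued element $T(w_1,w_2,w_3)$; since $T$ is nonzero this forces $a^3=1$ in order for $\hat m_a$ to preserve $T$, i.e. to be an automorphism of $\bO$. Conversely, when $a^3=1$ one has $\sigma(a)=a^{-1}=a^2$, and comparing the actions on the four summands (where $\Phi(a,a)$ acts as $\id,m_a,m_a,m_{\sigma(a)^2}$ and $\hat m_a$ as $\id,m_a,m_a,m_a$) shows $\hat m_a=\Phi(a,a)\in\im\Phi$, which is indeed an automorphism.

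For the last assertion, $\Phi\colon(T_{A_x,\sigma})^2\injects\Aut(\bO)$ has image a $2$-dimensional subtorus, hence, after base change to $\overline k$, a maximal torus of the connected reductive group $G_{\overline k}$ of type $\mathrm{G}_2$; a maximal torus of a connected reductive group is its own centralizer. As $\im\Phi$ is defined over $k$, so is $Z_{\Aut(\bO)}(\im\Phi)$, and its base change to $\overline k$ equals the maximal torus $(\im\Phi)_{\overline k}$; therefore $Z_{\Aut(\bO)}(\im\Phi)=\im\Phi$. (One can also argue directly over $\overline k$: $\im\Phi$ acts on $\bO^0\otimes\overline k$ with the seven weights $0$ and $\pm(1,0),\pm(0,1),\pm(1,1)$, each of multiplicity one, so anything commuting with it is diagonal for this decomposition, fixes $x$ up to sign, hence fixes $A_x$ pointwise, hence lies in the copy of $\operatorname{SL}_3$ stabilizing $A_x$ pointwise, where it centralizes a maximal torus and so lies in $\im\Phi$.) The only genuine calculation is the scaling identity for $T$ together with the point that $\Pi$ really is the sole obstruction to $\hat m_a$ being multiplicative; both sit on top of the reconstruction already carried out, and the one delicate part is keeping the $\sigma$-twists — the placement of $a$ versus $\sigma(a)$ — straight across the four summands.
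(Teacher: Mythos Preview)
Your argument is correct, and your treatment of the ``only if $a^{3}=1$'' direction via the scaling $T(aw_{1},aw_{2},aw_{3})=a^{3}T(w_{1},w_{2},w_{3})$ is in fact more explicit than the paper, which simply asserts this claim in passing.

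Where you diverge from the paper is in the centralizer statement. The paper's route is to first compute the centralizer of $\im\Phi$ inside the larger group $\SO_{\bO^{0}}$: on the weight decomposition this centralizer is the rank-$3$ torus $\{(s,t,u)\in (T_{A_x,\sigma})^{3}\}$ acting by $m_{s},m_{t},m_{u}$ on the three $A_x$-summands of $A_x^{\perp}$, which is exactly the product $\im\Phi\cdot\{m_{a}:a\in A_x^{1}\}$. Intersecting with $\Aut(\bO)$ then reduces to asking which $m_{a}$ are automorphisms, and the $a^{3}=1$ criterion (together with your observation $\hat m_{a}=\Phi(a,a)$ in that case) finishes it. So the paper ties the centralizer claim directly to the $m_{a}$ maps and to the $a^{3}=1$ condition just established.

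Your approach instead invokes that a $2$-dimensional torus in a connected reductive group of rank $2$ is maximal, hence self-centralizing. This is perfectly valid given that $G$ is of type $\mathrm{G}_2$, but be aware of the paper's internal logic: the very next proposition (Proposition~\ref{prop:amaxtori}) deduces that $\im\Phi$ is a maximal torus \emph{from} the centralizer statement you are proving, and the rank/dimension of $G$ is recorded only later in Section~\ref{sec:g2}. So your main argument is forward-referencing within the paper's development (as is your alternative via $\Stab_{x}(G)\simeq\SU_{A_x^{\perp}}$). The paper's computation is more self-contained at this point in the exposition and makes the role of $m_{a}$ explicit; yours is shorter once one is willing to import the structure theory.
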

\begin{proof}[Sketch of proof]
The commutativity is clear given the compatibility of the actions; one easily checks that it preserves the norm form.

First, notice that the centralizer of $\Phi$ in $\SO_{\bO^0}$ is generated by the image of $\Phi$ and $m_a$. The claim about centralizers now follows by observing that whenever $a^3=1$ the image of $m_a$ is in the image of $\Phi$.
\end{proof}

The following proposition is an immediate consequence of the above.
\begin{prop}\label{prop:amaxtori}
The torus $(T_{A_x,\sigma})^2$ embedded into $G$ via $\Phi$ is a maximal torus in $G$.
\end{prop}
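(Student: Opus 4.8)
The plan is to verify the two defining properties of a maximal torus: that $(T_{A_x,\sigma})^2$ is a torus, and that it is maximal, i.e.\ its rank equals the rank of $G$, which is $2$. That $(T_{A_x,\sigma})^2$ is an algebraic torus is immediate: $A_x$ is a quadratic \'etale algebra over $k$ (Proposition on rank $2$ subalgebras), so $T_{A_x,\sigma}$ is a one-dimensional torus (it is $\ker(N_{A_x/k})$, a form of $\bG_m$), and a product of tori is a torus. By Proposition \ref{prop:Phi} the map $\Phi$ identifies $(T_{A_x,\sigma})^2$ with a subgroup of $G$ of dimension $2$; since $\Phi$ is injective on points over every extension (one recovers $a$ and $b$ from the action on the second and third factors of the decomposition $A_{x,y,z} = \langle 1,x\rangle \oplus \langle y,xy\rangle \oplus \langle z,xz\rangle \oplus \langle yz,x(yz)\rangle$), $\Phi$ is a closed immersion and the image is a subtorus of $G$ of dimension $2$.

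It remains to see that a $2$-dimensional torus in $G$ is maximal. This is where I would invoke that $G$ is semisimple of type G2, hence of rank $2$: every maximal torus of $G$ has dimension $2$, so a subtorus of dimension $2$ cannot be properly contained in a larger torus and is therefore maximal. Alternatively — and this is the route the preceding proposition makes available, avoiding any appeal to the classification of $G$ beyond its dimension count — one can argue via centralizers: the centralizer of a maximal torus is the torus itself (in a connected reductive group), and the last sentence of the preceding proposition states that the image of $\Phi$ is its own centralizer in $\Aut(\bO) = G$. A torus equal to its own centralizer in a connected reductive group is maximal, since any torus containing it would centralize it. This gives the conclusion directly from the results already established.

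The only genuinely substantive input has already been done upstream — namely the construction of $\Phi$, the verification that it is a homomorphism of algebraic groups, and the centralizer computation in the previous proposition — so the present proof is essentially a bookkeeping assembly: check $\Phi$ is a closed immersion of tori, then quote "self-centralizing torus $\Rightarrow$ maximal." The one point deserving a line of care is that self-centralizing must be understood scheme-theoretically (or at least over $\overline{k}$) so that the standard fact "$Z_G(T) = T$ iff $T$ maximal" applies; since the centralizer statement was proved by an explicit description of $Z_{\SO_{\bO^0}}(\Phi)$ and then of $Z_G(\Phi)$, this is already in hand. I do not anticipate a real obstacle here; if one wished to be fully self-contained one would spend a sentence noting that $\dim G = 14$ and that a group of type G2 has rank $2$, but even that can be bypassed by the centralizer argument.
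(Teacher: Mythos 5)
Your proof is correct and in essence matches the paper's: the paper states the proposition as an immediate consequence of the preceding one, i.e.\ exactly your centralizer argument that a torus equal to its own centralizer in $G$ cannot be properly contained in any larger torus. Your additional remarks (injectivity of $\Phi$, dimension $2$, the alternative appeal to $G$ having rank $2$) are fine but not part of the paper's argument, and your choice to lean on the self-centralizing property rather than the rank count is the right one, since the paper only records the rank of $G$ afterwards.
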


\subsection{Describing Automorphisms of $\bO$}

In this section we give several methods of describing automorphisms of $\bO$ which shall be of use in describing the embeddings of groups of type A2 into those of type G2.

We now give several methods of describing or specifying an element of $G$.
\begin{enumerate}
\item
Fix an isomorphism $\bO \simeq A_{x,y,z}$.

The automorphism group of $\bO$ is in bijection with the set of triples $(x',y',z')$ in $\bO^0$ with $N(x) = N(x')$,  $N(y) = N(y')$,  $N(z) = N(z')$, $x'$ perpendicular to $y'$, and $z'$ perpendicular to the algebra generated by $x'$ and $y'$.
Given such a triple we obtain the automorphism which takes $x\mapsto x'$, $y\mapsto y'$ and $z\mapsto z'$.
By the Cayley-Dickson construction we have isomorphisms $A_{x'}\simeq A_{x}$, $A_{x',y'}\simeq A_{x,y}$ and $A_{x',y',z'} \simeq A_{x,y,z}$.
It is clear that all automorphisms arise uniquely this way.

\item
The following description is an easy consequence of the first.
As automorphisms of $\bO$ must preserve both the identity element of $\bO$ and the norm on $\bO$ we find that conclude that $G$ has an embedding:
\[ G \injects \SO_{\bO^0} \injects \GL(\bO^0). \]
As $\bO = A_{x,y,z}$ is generated by $x,y$ and $z$, an automorphism of $\bO$ is determined by where it sends these generators.

The condition that $G\subset \SO_{\bO^0}$ implies that these will be taken to orthogonal elements of the same length.
It is clear we shall need the conditions $g(xy)=g(x)g(y)$, $g(xz)=g(x)g(z)$, $g(yz)=g(y)g(z)$ and $g((xy)z) = g(xy)g(z)$. The multiplicativity of the norm means that these conditions do not actually result in restrictions on the possible images of $x$, $y$ and $z$.
The only additional condition that describes $G$ as a subgroup of $\SO_{\bO^0}$ is that $g(z)$ must be perpendicular to $g(x)g(y)$.
As once this is satisfied, the triple $(g(x),g(y),g(z))$ satisfies the conditions of the first description.
In particular:
\[  G = \{ g\in \SO_{\bO^0} \mid g(xy)=g(x)g(y), \cdots, g((xy)z) = g(xy)g(z)\text{ and } (g(z), g(x)g(y)) = 0 \}. \]

\item
The number of additional equations in the previous description makes it unwieldy to work with directly.
An alternative description is as follows.

Every element of $\SO_{\bO^0}$, and hence of $G$ stabilizes a non-isotropic $k$-linear subspace of $\bO^0$, and consequently, also its orthogonal complement.
We may select a $k$-rational basis element $x$ of this non-isotropic space.
Thus every element of $G$ is determined by a choice of $x$ it stabilizes and an element of $\Stab_x(G) \injects \SO_{A_x^\perp}$.
Note that the choice of $x$ may not be unique, though it is generically unique.

Fix two elements $y,z$ of $A_x^\perp$ which are $A_x$-perpendicular (that is, perpendicular with respect to the $A_x$-Hermitian form on $A_x^\perp$).
Any element of $G$ which stabilizes $x$ must preserve the  $A_x$-Hermitian pairing $H$ on $A_x^\perp$.
Thus we conclude that $\Stab_x(G) \subset \UU_{A_x^\perp}$. 
Note that preserving $H$ implies that the images of $y,z$ have the appropriate lengths, that $g(xy)=g(x)g(y)$ and that $(g(z), g(x)g(y)) = 0$.
Furthermore, an element of $G$ must also preserve the $A_x$-trilinear form $T$ on $A_x^\perp$, that is, it must have trivial $A_x$-determinant.
In particular $\Stab_x(G) \subset \SU_{A_x^\perp}$. We claim this inclusion is an equality.

Now, an element of $\Stab_x(G)$ is determined by where it sends $y$ and $z$. The only condition being that $y$ and $z$ must be perpendicular with respect to $H$. The third $A_x$-basis element $yz$ of $A_x^\perp$ must then be sent to $g(y)g(z)$.
By contrast, an element $\UU_{A_x^\perp}$ must send  $y$ and $z$ to elements which are $H$-perpendicular, however, the condition on where it sends $yz$ is only that it must be $H$ perpendicular to both $y$ and $z$. This space is one-dimensional (over $A_x$), thus the choice is only determined up to $A_x^1$. The additional condition that $T$ be preserved removes this ambiguity and implies that, for an element of $\SU_ {A_x^\perp}$ the image of $yz$ is determined by the images of $y$ and $z$.
We claim these conditions imply that $yz$ must be sent to $g(y)g(z)$. Indeed we have:
\[ H(g(yz),g(yz))=H(yz,yz)=T(y,z,yz) = T(g(y),g(z),g(yz)) = H(g(y)g(z),g(yz)) \]
which implies that for $g\in \SU_{A_x^\perp}$ we will have $g(yz)=g(y)g(z)$.

We conclude that we may describe $G$ as:
\[ G(R) = \underset{x\in\bO_R^\times/R^\times}\cup  \SU_{A_x^\perp}(R) . \]
Moreover we have maps:
\[  \Stab_x{G} \injects G \surjects Gx \]
with the orbit $Gx$ of $x$ being six dimensional and $\SU_{A_x^\perp}$ being eight dimensional, thus $G$ is $14$ dimensional.
\end{enumerate}

The discussion above allows us to conclude the following:

\begin{prop}
The group $G(k)$ acts transitively on elements of $\bO^0$ of a given norm.
\end{prop}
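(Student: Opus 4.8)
The plan is to deduce the statement from the first of the three descriptions of $\Aut(\bO)$ given just above: once an isomorphism $\bO \simeq A_{x,y,z}$ is fixed, there is an automorphism carrying $x$ to a given $x' \in \bO^0$ as soon as $(x',y',z')$ can be completed to a \emph{valid triple}, i.e. one with $N(x')=N(x)$, $N(y')=N(y)$, $N(z')=N(z)$, with $x'\perp y'$, and with $z'$ perpendicular to the subalgebra generated by $x'$ and $y'$. So fix $c\in k^\times$ and two elements $x,x'\in\bO^0$ with $N(x)=N(x')=c$ (if no element of $\bO^0$ has norm $c$ there is nothing to prove). First I would use the rank-$8$ Cayley--Dickson structure propositions to write $\bO = A_{x,y,z}$ with $x$ as the \emph{first} generator: such $y\in A_x^\perp$ and $z\in A_{x,y}^\perp$ of nonzero norm exist because $A_x^\perp$ ($6$-dimensional) and $A_{x,y}^\perp$ ($4$-dimensional) are nondegenerate quadratic spaces, hence represent nonzero values. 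It then suffices to produce $y'\in A_{x'}^\perp$ with $N(y')=N(y)$ and $z'\in A_{x',y'}^\perp$ with $N(z')=N(z)$: the triple $(x',y',z')$ is then valid (the perpendicularity conditions being exactly $y'\in A_{x'}^\perp$ and $z'\in A_{x',y'}^\perp$), and the first description yields $g\in G(k)$ with $g(x)=x'$, as desired.

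To obtain $y'$: the quadratic spaces $A_x$ and $A_{x'}$ are isometric, both being $\langle 1, N(x)\rangle = \langle 1, N(x')\rangle$ in the orthogonal bases $\{1,x\}$ and $\{1,x'\}$. Since $A_x$ and $A_{x'}$ are nondegenerate subspaces of $\bO$ we have orthogonal decompositions $\bO = A_x \perp A_x^\perp = A_{x'} \perp A_{x'}^\perp$, so Witt's cancellation theorem (valid as $\car k \neq 2$) gives $A_x^\perp \simeq A_{x'}^\perp$; since $N(y)$ is represented by $A_x^\perp$ it is represented by $A_{x'}^\perp$, giving the required $y'$ (of norm in $k^\times$). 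To obtain $z'$: now $A_{x',y'}$ is a quaternion subalgebra of $\bO$, and by the Cayley--Dickson formula for the norm its underlying quadratic space is $\langle 1, N(x'), N(y'), N(x')N(y')\rangle = \langle 1, N(x), N(y), N(x)N(y)\rangle$, which is precisely the quadratic space of $A_{x,y}$. Hence $A_{x,y} \simeq A_{x',y'}$ as quadratic spaces, and applying Witt cancellation to $\bO = A_{x,y}\perp A_{x,y}^\perp = A_{x',y'}\perp A_{x',y'}^\perp$ gives $A_{x,y}^\perp \simeq A_{x',y'}^\perp$; since $N(z)$ is represented by $A_{x,y}^\perp$, it is represented by $A_{x',y'}^\perp$, giving $z'$.

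The only genuine content is the pair of transfer-of-representability steps, i.e. the two invocations of Witt cancellation to pass from $A_x^\perp \simeq A_{x'}^\perp$ and $A_{x,y}^\perp \simeq A_{x',y'}^\perp$; everything else is bookkeeping with the Cayley--Dickson propositions already established, together with the elementary remark that a nonzero nondegenerate quadratic form represents a nonzero value. I would expect no hidden obstacle: the verification that $(x',y',z')$ meets the hypotheses of the first description is immediate, and the norm-form computation for the quaternion subalgebra is exactly the $\delta = -N(y')$ case of the construction recalled in the earlier classification discussion.
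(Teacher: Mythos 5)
Your argument is correct and is essentially the argument the paper intends: the proposition is stated there as a direct consequence of the first description of automorphisms of $\bO$ (the bijection with triples $(x',y',z')$), and your two Witt-cancellation steps are exactly the completion-of-a-generating-triple bookkeeping the paper leaves implicit (compare the sketch in the classification of CD-algebras, where $A_x^\perp \simeq \delta A_x$ and $A_{x,y}^\perp \simeq \delta A_{x,y}$). The only caveat is that you silently restrict to norms in $k^\times$; this matches the paper's standing Convention and its later use (transitivity on non-isotropic lines), so it is not a genuine discrepancy.
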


\begin{prop}
Two subgroups $\SU_{A_x^\perp}$ and $\SU_{A_{y}^\perp}$ of $G$ are conjugate and isomorphic if and only if $A_x \simeq A_{y}$.
\end{prop}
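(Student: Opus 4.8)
The plan is to prove both directions separately. For the "only if" direction, suppose $\psi \in G(k)$ conjugates $\SU_{A_x^\perp}$ to $\SU_{A_y^\perp}$. The key observation is that each subgroup $\SU_{A_w^\perp}$ is precisely $\Stab_w(G)$ (by the third description of $G$ above), so it has a one-dimensional space of fixed vectors in $\bO^0$, namely $kw$. Conjugation by $\psi$ carries the fixed space of $\SU_{A_x^\perp}$ to the fixed space of $\SU_{A_y^\perp}$, so $\psi(x) \in ky$, hence $\psi(x) = cy$ for some $c \in k^\times$. Since $\psi$ preserves the norm, $N(x) = c^2 N(y)$; but by the Cayley–Dickson construction $A_x \cong k[t]/(t^2 + N(x))$ and rescaling $x$ by $c$ replaces $N(x)$ by $c^2 N(x)$, which does not change the isomorphism class of the quadratic algebra. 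Therefore $A_y = A_{\psi(x)} = \psi(A_x)\cong A_x$.

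For the "if" direction, suppose $A_x \cong A_y$ as $k$-algebras with involution. Then $N(x)$ and $N(y)$ differ by a square in $k^\times$ (since $A_w \cong k[t]/(t^2+N(w))$), so after rescaling $y$ we may assume $N(x) = N(y)$ — and rescaling $y$ by a scalar does not change $A_y$ or $A_y^\perp$ or the subgroup $\SU_{A_y^\perp}$. By the proposition asserting that $G(k)$ acts transitively on elements of $\bO^0$ of a given norm, there is $\psi \in G(k)$ with $\psi(x) = y$. Since $\psi$ is an algebra automorphism it carries $A_x$ isomorphically onto $A_{\psi(x)} = A_y$ and carries $A_x^\perp$ onto $A_y^\perp$; because the $A_x$-Hermitian and trilinear structures on $A_x^\perp$ are defined purely from the CD-structure and the subalgebra, $\psi$ intertwines them with the corresponding structures on $A_y^\perp$. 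Hence conjugation by $\psi$ sends $\SU_{A_x^\perp} = \Stab_x(G)$ isomorphically onto $\Stab_y(G) = \SU_{A_y^\perp}$, proving both conjugacy and isomorphism.

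The main obstacle is the "only if" direction — specifically, justifying that the fixed-point space of $\SU_{A_w^\perp}$ acting on $\bO^0$ is exactly $kw$, so that the fixed space pins down $w$ up to scalar. One must check that no nonzero vector in $A_w^\perp$ is fixed by all of $\SU_{A_w^\perp}$; this follows because $\SU_{A_w^\perp}$ acts on the Hermitian space $A_w^\perp$ (of $A_w$-rank $3$) as the full special unitary group, which has no nonzero fixed vectors, but one should make sure the action on the traceless part $A_w^\perp \cap \bO^0$ — which may differ from $A_w^\perp$ by the line spanned by $w$ itself if $\sigma$ is trivial on $A_w$ — is handled correctly. Once the fixed-space description is in hand, the rest is the routine bookkeeping sketched above, using only the transitivity proposition and the Cayley–Dickson normalization of quadratic subalgebras.
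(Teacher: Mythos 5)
Your proof is correct and takes essentially the same route as the paper, which states this proposition as an immediate consequence of the preceding discussion: the identification $\Stab_x(G)=\SU_{A_x^\perp}$ together with the transitivity of $G(k)$ on elements of $\bO^0$ of a given norm. Your unique-fixed-line and norm-well-defined-up-to-squares argument is precisely the reasoning the paper spells out later in the proof of Theorem \ref{thm:SUclassification} (your parenthetical worry is moot, since $A_w^\perp\subset\bO^0$ and $w\notin A_w^\perp$, so the fixed space is exactly $kw$ as you argue).
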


\begin{prop}
The group $G$ has rank $2$ and dimension $14$.
\end{prop}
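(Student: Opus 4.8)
The statement to prove is that $G$ has rank $2$ and dimension $14$.

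The plan is to assemble this from facts already established in the excerpt. For the dimension, I would invoke the third description of $G$ given above: there is an orbit map $G \surjects Gx$ sending an automorphism to its value on a fixed $x \in \bO^0$ with $N_B(x) \in k^\times$, whose fibre over $x$ is $\Stab_x(G) \cong \SU_{A_x^\perp}$. The orbit $Gx$ consists of vectors in $\bO^0$ of the fixed norm $N_B(x)$; since $\bO^0$ is $7$-dimensional and the norm condition cuts out a hypersurface, $Gx$ is $6$-dimensional (the excerpt already asserts this, using that $G(k)$ — hence $G$ — acts transitively on elements of $\bO^0$ of a given norm). The group $\SU_{A_x^\perp}$ is the special unitary group of a rank-$3$ Hermitian space over the quadratic algebra $A_x$, so it has dimension $\dim \mathrm{SU}_3 = 8$. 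Adding fibre and base, $\dim G = 8 + 6 = 14$.

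For the rank, I would use Proposition \ref{prop:amaxtori}: the torus $(T_{A_x,\sigma})^2$ embeds into $G$ via $\Phi$ as a maximal torus. Each factor $T_{A_x,\sigma}$ is the norm-one torus of the quadratic \'etale algebra $A_x$, which is a one-dimensional torus (it is a form of $\bG_m$). Hence $(T_{A_x,\sigma})^2$ has rank $2$, and since it is maximal, $G$ has rank $2$.

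The only genuine subtlety is that ``maximal torus'' in Proposition \ref{prop:amaxtori} must be used in the sense that guarantees it has the same dimension as a maximal torus over $\overline{k}$ — i.e. that $\Phi((T_{A_x,\sigma})^2)_{\overline{k}}$ remains maximal in $G_{\overline{k}}$ — which follows from the preceding proposition identifying the image of $\Phi$ with its own centralizer in $\Aut(\bO)$ (a torus equal to its own centralizer in a connected reductive group is maximal, and this persists after base change). I expect this to be the main point to be careful about; the dimension count is then routine bookkeeping. I would also remark that both conclusions are insensitive to the choice of $\bO$ and of $x$, since all the relevant objects are forms of the split ones. Concretely:

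\begin{proof}
By Proposition \ref{prop:amaxtori} the torus $(T_{A_x,\sigma})^2$ embeds via $\Phi$ as a maximal torus of $G$; since $T_{A_x,\sigma}$ is the norm-one torus of the rank-two \'etale algebra $A_x$, it is one-dimensional, so $(T_{A_x,\sigma})^2$ has rank $2$ and hence so does $G$. For the dimension, fix $x\in\bO^0$ with $N_B(x)\in k^\times$. The orbit map realizes $\Stab_x(G)\cong\SU_{A_x^\perp}$ as the fibre over $x$ of the surjection $G\surjects Gx$, where $Gx$ is the set of elements of $\bO^0$ of norm $N_B(x)$, a six-dimensional variety. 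As $\SU_{A_x^\perp}$ is the special unitary group of a rank-three $A_x$-Hermitian space, it has dimension $8$, so $\dim G = 8+6 = 14$.
\end{proof}
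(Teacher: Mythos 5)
Your proof is correct and takes essentially the same route as the paper: the paper states this proposition as an immediate consequence of the preceding discussion, where the dimension count $\dim G = \dim Gx + \dim \SU_{A_x^\perp} = 6+8 = 14$ is given explicitly at the end of the third description of $G$, and the rank follows from the maximal torus $(T_{A_x,\sigma})^2$ of Proposition \ref{prop:amaxtori}. Your added remark that maximality is justified by the image of $\Phi$ being its own centralizer is a reasonable elaboration of the same point the paper relies on.
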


\begin{rmk}
We have already exhibited some maximal tori of $G$, namely the images of the maps $\Phi$. Not all maximal tori arise this way.
\end{rmk}

\section{Forms of Algebraic Groups}
\label{sec:forms}

In order to fully describe triples $(H_k, G_k, H_k\rightarrow G_k)$:
which after base change become isomorphic to our fixed triple:
$(H_{\overline{k}},G_{\overline{k}},H_{\overline{k}} \rightarrow G_{\overline{k}})$
it shall first be necessary to understand the forms of $G_k$ and of $H_k$.

In this section we shall briefly recall several standard results concerning the Galois cohomology of algebraic groups which allow for a concrete descriptions of the groups we wish to study. Most of the results of this section are given with little or no proof. A more thorough and rigourous treatment of this material can be found in \cite{book_of_involutions}.

\begin{prop}\label{prop:formsofgroups}
Fix an algebraic group $G_k$ over $k$.
The isomorphism classes of groups $\tilde{G}_k$ over $k$ with $\tilde{G}_{\overline{k}} \simeq G_{\overline{k}}$ is in bijection with:
\[ H^1( \Gal(\overline{k}/k), \Aut(G_k(\overline{k})) ). \]
\end{prop}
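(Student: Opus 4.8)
The plan is to deduce this from the general theory of descent / twisted forms via non-abelian Galois cohomology, since $\tilde{G}_k$ is a $\overline{k}/k$-form of $G_k$ exactly when it becomes isomorphic to $G_k$ after base change. First I would recall the setup: let $\Gamma = \Gal(\overline{k}/k)$, let $\overline{G} = G_{\overline{k}}$, and write $\underline{\Aut}(\overline{G})$ for the group $\Aut(G_k(\overline{k}))$ of $\overline{k}$-automorphisms, on which $\Gamma$ acts by transport of structure through its action on $\overline{k}$ (for $\gamma \in \Gamma$ and $\varphi \in \underline{\Aut}(\overline{G})$, set ${}^\gamma\varphi = \gamma \circ \varphi \circ \gamma^{-1}$, using the semilinear $\Gamma$-action on $\overline{G}$ coming from its $k$-structure). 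This makes $\underline{\Aut}(\overline{G})$ a (discrete) $\Gamma$-group, and $H^1(\Gamma, \underline{\Aut}(\overline{G}))$ is the pointed set of continuous cocycles modulo coboundaries in the usual non-abelian sense.

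The core of the argument is a bijection between isomorphism classes of forms and cocycle classes, constructed as follows. Given a form $\tilde{G}_k$ together with an isomorphism $\psi : \tilde{G}_{\overline{k}} \xrightarrow{\sim} \overline{G}$ over $\overline{k}$, I would define $a_\gamma = \psi \circ {}^\gamma(\psi^{-1}) = \psi \circ \gamma \circ \psi^{-1} \circ \gamma^{-1} \in \underline{\Aut}(\overline{G})$, using that $\tilde{G}_k$ has its own $k$-structure so $\tilde{G}_{\overline{k}}$ carries a semilinear $\Gamma$-action commuting appropriately. A direct check shows $\gamma \mapsto a_\gamma$ satisfies the cocycle identity $a_{\gamma\delta} = a_\gamma \cdot {}^\gamma a_\delta$; replacing $\psi$ by another trivialization changes the cocycle by a coboundary, and an isomorphism $\tilde{G}_k \cong \tilde{G}'_k$ over $k$ likewise changes it by a coboundary, so the class in $H^1(\Gamma, \underline{\Aut}(\overline{G}))$ depends only on the isomorphism class of $\tilde{G}_k$. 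This gives a well-defined map from forms to $H^1$.

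For the inverse, given a cocycle $(a_\gamma)$ one twists the $\Gamma$-action on $\overline{G}$ (equivalently, on its coordinate ring) by $\gamma \mapsto a_\gamma \circ \gamma$; the cocycle condition is precisely what guarantees this is again an action, and Galois descent for (quasi-projective) schemes — the statement that the category of $k$-schemes is equivalent to that of $\overline{k}$-schemes with semilinear $\Gamma$-action — produces a $k$-scheme $\tilde{G}_k$ with $\tilde{G}_{\overline{k}} \cong \overline{G}$, with the twisted action; since the $a_\gamma$ are group automorphisms, the descent is compatible with multiplication, inversion and identity, so $\tilde{G}_k$ is a $k$-group. Checking that these two constructions are mutually inverse on isomorphism classes is then a formal unwinding of the definitions.

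The main obstacle — or rather the main thing to be careful about — is the descent input: one must know that an affine (or quasi-projective) $\overline{k}$-scheme equipped with a semilinear $\Gamma$-action descends uniquely to $k$, which is the effectivity of Galois descent and requires either finiteness/quasi-projectivity hypotheses or, in the affine case, a direct argument taking $\Gamma$-invariants of the coordinate ring using that $\overline{k}/k$ is Galois (Speiser's theorem / the vanishing of $H^1(\Gamma,\Gl_n)$ in the appropriate sense). Since all our $G_k$ are affine algebraic groups this is standard, and I would simply cite it (e.g. \cite{book_of_involutions}) rather than reprove it; the only genuinely ``non-abelian'' subtlety is that $H^1$ here is a pointed set, not a group, and ``bijection'' must be read accordingly, with the distinguished point corresponding to $G_k$ itself. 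Given the paper's stated aim of recalling standard facts with light proof, I would present the cocycle construction in a few lines and reference the descent step.
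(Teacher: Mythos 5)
Your argument is correct and is exactly the standard Galois-descent/twisting proof that the paper implicitly relies on: the paper gives no proof of its own, simply citing Serre (Cor.\ III.1.3 of \cite{Serre_cohom}), and your cocycle construction $a_\gamma = \psi\circ{}^\gamma\psi^{-1}$ together with effectivity of descent for affine group schemes is precisely the content of that reference. No gaps worth flagging beyond the care you already take with effectivity and with reading $\Aut(G_k(\overline{k}))$ as the automorphisms of $G_{\overline{k}}$ as an algebraic group.
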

This is a standard result of Galois descent, see \cite[Cor III.1.3]{Serre_cohom}.

In order to compute $H^1( \Gal(\overline{k}/k), \Aut(G_k(\overline{k})) )$ we observe that we have exact sequences:
\[  1 \rightarrow Z(G_k) \rightarrow G_k \rightarrow G_k^{\rm adj} \rightarrow 1 \]
and 
\[ 1 \rightarrow G_k^{\rm adj} \rightarrow \Aut(G_k) \rightarrow {\rm Out}(G_k) \rightarrow 1 \] 
where ${\rm Out}(G_k) $ denotes the outer automorphisms and $G_k^{\rm adj}$ denotes the adjoint form of $G_k$.

\subsection{Forms of G2}

It is well-known that for a semi-simple group ${\rm Out}(G_k)$ corresponds to automorphisms of the Dynkin diagram.
For G2 this group is trivial. As the center $Z(G_k)$ is also trivial,
it follows that:
\[ H^1( \Gal(\overline{k}/k), \Aut(G_k(\overline{k})) ) = H^1( \Gal(\overline{k}/k), G_k(\overline{k}) ) = H^1(\Gal(\overline{k}/k), \Aut(\bO_{\overline{k}})). \]

\begin{prop}\label{prop:FormsG2}
The cohomology group:
\[  H^1(\Gal(\overline{k}/k), \Aut(\bO_{\overline{k}})) \]
is in bijection with isomorphism classes of algebras $\tilde{\bO}_k$ such that $\tilde{\bO}_{\overline{k}} \simeq \bO_{\overline{k}}$.

Moreover, the forms of $G_k$ are all of the form $\Aut(\tilde{\bO})$ where $\tilde{\bO}$ is a form of $\bO$.
\end{prop}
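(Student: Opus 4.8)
The plan is to deduce both assertions from Galois descent, exactly in the spirit of Proposition~\ref{prop:formsofgroups}, applied first to the algebra $\bO$ itself and then transported to its automorphism group.

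First I would establish the classification of forms of $\bO$. An octonion algebra over $k$ is a finite-dimensional $k$-vector space equipped with a unit and a bilinear multiplication map (a tensor in $\bO^\vee\otimes\bO^\vee\otimes\bO$); over $\overline{k}$ the forms of $\bO_{\overline{k}}$ form a category carrying a semilinear action of $\Gal(\overline{k}/k)$, and since every such form is already split by a finite subextension, one reduces to descent of vector spaces with a tensor along finite Galois extensions $L/k$, which is effective. The standard descent formalism (\cite[Cor III.1.3]{Serre_cohom}, in its version for algebraic structures) then gives a bijection between the isomorphism classes of $k$-forms $\tilde{\bO}$ of $\bO$ and $H^1(\Gal(\overline{k}/k),\Aut(\bO_{\overline{k}}))$, where $\Aut(\bO_{\overline{k}})$ denotes the group of $\overline{k}$-algebra automorphisms of $\bO_{\overline{k}}$. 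By the definition of $G_k$ this group is $G_k(\overline{k})$, so this is precisely the asserted bijection of the first part.

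Next I would invoke the identification carried out in the paragraph preceding the proposition: for type G2 the group $\mathrm{Out}(G_k)$ is trivial and $Z(G_k)$ is trivial, so the two displayed exact sequences collapse to $\Aut(G_k)=G_k^{\mathrm{adj}}=G_k$ as algebraic groups over $k$, and consequently $H^1(\Gal(\overline{k}/k),\Aut(G_k(\overline{k})))=H^1(\Gal(\overline{k}/k),G_k(\overline{k}))=H^1(\Gal(\overline{k}/k),\Aut(\bO_{\overline{k}}))$. Combined with Proposition~\ref{prop:formsofgroups}, this already shows that the $k$-forms of $G_k$ are classified by the very same cohomology set that classifies the forms of $\bO$.

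Finally I would check that these two classifications agree under the assignment $\tilde{\bO}\mapsto\Aut_k(\tilde{\bO})$, which is the only point carrying any content. Fix a finite Galois extension $L/k$ splitting everything in sight and a cocycle $(c_\sigma)_{\sigma\in\Gal(L/k)}$ valued in $\Aut_L(\bO_L)$. The twisted algebra $\tilde{\bO}$ is $\bO_L$ with the modified semilinear action $\sigma\ast v = c_\sigma({}^{\sigma}v)$, and an $L$-algebra automorphism $g$ of $\bO_L$ descends to $\tilde{\bO}$ if and only if it commutes with $\ast$, that is, if and only if $c_\sigma^{-1}\circ g\circ c_\sigma = {}^{\sigma}g$ for all $\sigma$. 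This is exactly the condition cutting out the twist of $\Aut(\bO_L)$ by $(c_\sigma)$ inside $\Aut(\bO_L)$, so $\Aut_k(\tilde{\bO})$ is the $k$-form of $G_k$ attached to $(c_\sigma)$; letting $(c_\sigma)$ range over all cocycles shows every form of $G_k$ arises as $\Aut_k(\tilde{\bO})$ for some form $\tilde{\bO}$ of $\bO$. The main obstacle, such as it is, is purely organizational: one must work with the automorphism group \emph{scheme} (so that $\Aut_k(\bO)(\overline{k})=\Aut_{\overline{k}}(\bO_{\overline{k}})$) and must twist $\bO$ and $G_k$ using the same cocycle; there is no arithmetic or geometric difficulty beyond this bookkeeping.
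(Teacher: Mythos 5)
Your argument is correct. The paper offers no proof of this proposition beyond the citation to \cite[Thm.~26.19]{book_of_involutions}, and what you have written is precisely the standard Galois-descent argument underlying that reference: descent for a vector space with a multiplication tensor gives the first bijection, the triviality of $Z(G_k)$ and ${\rm Out}(G_k)$ (already recorded in the paper just before the proposition) identifies the two cohomology sets, and your cocycle computation $c_\sigma^{-1}\circ g\circ c_\sigma={}^{\sigma}g$ correctly verifies that twisting $\bO$ and twisting $G_k$ by the same cocycle are compatible under $\tilde{\bO}\mapsto\Aut(\tilde{\bO})$.
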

See \cite[Thm. 26.19]{book_of_involutions}.

\begin{rmk}
Though we have shown already that all of the forms of $\bO$ come from the real places of $k$, this can now also be seen as a consequence of the fact that $G$ is simply connected, as the local cohomology of simply connected groups is trivial for local fields other than $\bR$. Hence,
\[  H^1( \Gal(\overline{k}/k), G_k(\overline{k}) ) \injects \prod_\nu  H^1( \Gal(\overline{k_\nu}/k_\nu), G_k(\overline{k_\nu}) ) = \prod_{\nu || \infty}  H^1( \Gal(\overline{k_\nu}/k_\nu), G_k(\overline{k_\nu}) ). \]
\end{rmk}

\subsection{Forms of A2}

We sketch the classification of forms of simply connected groups of type A2, more thorough treatements of this material with complete proofs can be found in \cite{book_of_involutions}.

Simply connected groups $H$ of type A2 have an outer automorphism group of order $2$ and a center of order $3$ which we shall denote $\mu_3^\xi$. Note that $\mu_3^\xi$ is a twisted form of the group of cube roots of unity.

From the exact sequence:
\[ H^1( \Gal(\overline{k}/k), H_k^{\rm adj}(\overline{k}) ) \rightarrow H^1( \Gal(\overline{k}/k), \Aut(H_{\overline{k}}) ) \rightarrow H^1(\Gal(\overline{k}/k), \{\pm1\} ). \]
We deduce that associated to a form of $H$ is an element of $H^1(\Gal(\overline{k}/k), \{\pm1\} )$.
This determines a quadratic \'etale algebra $A$ over $k$.

From the exact sequences:
\[ H^1( \Gal(\overline{k}/k), H_k(\overline{k}) ) \rightarrow H^1( \Gal(\overline{k}/k), H_k^{\rm adj}(\overline{k}) ) \rightarrow H^2(\Gal(\overline{k}/k), \mu_3^\xi )  \]
we associate to a form of $H$ an element of $H^2(\Gal(\overline{k}/k), \mu_3^\xi )$.

The group $\mu_3^\xi$ arises from the following exact sequence:
\[ 1 \rightarrow \mu_3^\xi \rightarrow T_{A,\sigma} \overset{[3]}\rightarrow T_{A,\sigma} \rightarrow 1. \]
The group $ T_{A,\sigma}$ denoting the torus of norm one elements of $A$ over $k$.
We obtain the following exact sequence:
\[ H^1(\Gal(\overline{k}/k),  T_{A,\sigma} ) \overset{[3]} \rightarrow H^1(\Gal(\overline{k}/k),  T_{A,\sigma} ) \rightarrow H^2(\Gal(\overline{k}/k),\mu_3^\xi ) \rightarrow H^2(\Gal(\overline{k}/k), T_{A,\sigma} ) \]
The group $H^2(\Gal(\overline{k}/k), T_{A,\sigma} )$ classifies the kernel of the corestriction map from the Brauer group of $A$ to that of $K$.
The group $H^1(\Gal(\overline{k}/k),  T_{A,\sigma} )$ is isomorphic to $k^\times/N_{A/k}(A^\times)$.

Extending the exact sequence we find we may associate to $H$ a degree $3$ central simple algebra $M$ over $A$ such that $M\otimes_{A,\sigma} A \simeq M^{\rm op}$.
These are precisely the central simple algebras $M$ with involutions $\tau$ restricting to the non-trivial involution of $A$.

Inspecting the exact sequence further we observe that the map $[3]$ surjects onto $k^\times/N_{A/k}(A^\times)$ and thus we only obtain the trivial class in this cohomology group.

Finally, to a form of $H$ we may associate an element of $H^1( \Gal(\overline{k}/k), H_k(\overline{k}) ) $.
As $H$ is simply connected, this cohomology is supported at the real places $\nu$ of $k$.
If $A_\nu \simeq k_\nu \times k_\nu$ then $H_{k_\nu}$ is $\SL_3$ as there are no degree three central simple algebras over $\bR$. In this case $H^1( \Gal(\overline{k}/k), H_k(\overline{k}) ) $ is trivial.
If $A_\nu \simeq \bC$ then $H$ is the special unitary group for a Hermitian form on a $3$ dimensional space. In this case $H^1( \Gal(\overline{k}/k), H_k(\overline{k}) ) $  classifies the Hermitian forms with the same discriminant as the one defining $H$. There are precisely two, a definite form and an indefinite form.

\begin{rmk}
Note that when using the above exact sequences to describe $H^1( \Gal(\overline{k}/k), \Aut(H_{\overline{k}}) )$ one must twist by elements of $H^1(\Gal(\overline{k}/k), \{\pm1\} )$ before considering the kernel $H^1( \Gal(\overline{k}/k), H_k^{\rm adj}(\overline{k}) ) $. The effect is that one picks the \'etale agebra, $A$ then the algebra $M$ and then the definiteness of the form at real places which ramify in $A$.
\end{rmk}

\begin{cons*}
Let $A$ be a rank $2$-\'etale algebra over $k$.
Let $M$ be a degree $3$ simple algebra over $A$, with an involution $\tau$ restricting to the involution $\sigma$ on $A$.
Let $J$ be an element of $M$ such that $\tau(J)=J$.

Define $\tilde{H}_k$ to be the algebraic group whose functor of points is:
\[ \tilde{H}_k(R) = \{ g\in (M\otimes_k R)^\times \mid \tau(g)Jg = J \}. \]
It is an easy exercise to check that $\tilde{H}$ is isomorphic to $H$ over $\overline{k}$.
\end{cons*}

\begin{rmk}
Note that the choice of $J$ is equivalent to the choice of $\tau$ (see \cite[Prop. 2.18]{book_of_involutions}).

At real places of $k$ which ramify in $A$ we may suppose that $\tau$ is the conjugate transpose, in which case the definiteness or indefiniteness of $\tilde{H}_k$  is controlled by whether the eigenvalues of $J$ all have the same sign, or different signs at $\nu$.
\end{rmk}

\begin{prop}\label{prop:FormsA2}
The forms of $H_k$ are precisely the groups $\tilde{H}_k$ as constructed above. 
\end{prop}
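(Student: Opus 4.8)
The plan is to assemble the Proposition from the cohomological analysis already carried out in this subsection together with a direct realization argument; very little new work is needed. There are two things to check: that each $\tilde{H}_k$ produced by the Construction is a form of $H_k$, and conversely that every form of $H_k$ is obtained this way.

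First I would dispose of the first direction, which is the ``easy exercise'' noted in the Construction. Base-changing to $\overline{k}$ splits $A\otimes_k\overline{k}\simeq\overline{k}\times\overline{k}$, hence $M\otimes_k\overline{k}$ becomes a product of two degree-$3$ matrix algebras over $\overline{k}$ with $\tau$ interchanging the factors; writing $J$ and a point $g$ of $\tilde{H}_{\overline{k}}$ in components, the relation $\tau(g)Jg=J$ forces the second component of $g$ to be determined by the first, and projection onto the first factor identifies $\tilde{H}_{\overline{k}}$ with the group of the expected type, i.e. with $H_{\overline{k}}$. Thus the Construction gives a well-defined map from data $(A,M,\tau,J)$, taken up to the obvious isomorphisms, to the set of $k$-forms of $H_k$.

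For the converse, I would invoke the invariants extracted above. A form of $H_k$ corresponds, by Proposition \ref{prop:formsofgroups} and the ensuing exact sequences, to a class in $H^1(\Gal(\overline{k}/k),\Aut(H_{\overline{k}}))$, and the discussion of this subsection peels off from such a class: a quadratic \'etale algebra $A$ (its image in $H^1(\Gal(\overline{k}/k),\{\pm1\})$); after twisting by $A$, a degree-$3$ central simple algebra $M$ over $A$ with an involution $\tau$ of the second kind restricting to $\sigma$ (using that the relevant $H^2(\Gal(\overline{k}/k),\mu_3^\xi)$ is controlled by $H^2(\Gal(\overline{k}/k),T_{A,\sigma})$, with the $[3]$-ambiguity from $k^\times/N_{A/k}(A^\times)$ killed); and finally a class in $H^1(\Gal(\overline{k}/k),H_k(\overline{k}))$, supported at the real places ramifying in $A$, recording definiteness versus indefiniteness there. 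The Construction's inputs $(A,M,\tau)$ visibly produce the first two invariants; for the third, at a real ramified place one takes $\tau$ to be the conjugate transpose (per the Remark), and then the signature datum is precisely the eigenvalue-sign pattern of $J$, while globally $(\tau,J)$ is equivalent to a single choice of involution. So the remaining point is that for any prescribed pattern of signatures at the real ramified places there exists an invertible $\tau$-symmetric $J$ realizing it -- which is exactly the existence statement for $\tau$-Hermitian forms with given discriminant and given archimedean behaviour, and follows from the classification recalled earlier (two Hermitian forms of each discriminant over $\bR$, a definite and an indefinite one, and a form over $k$ pinned down once $M$, $\tau$ and the local data are fixed). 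Combining the two directions shows the groups $\tilde{H}_k$ exhaust the forms of $H_k$.

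The step I expect to be delicate is keeping the twisting order straight in the middle of the converse: as flagged in the Remark, one must twist $\Aut(H_{\overline{k}})$ by the quadratic class \emph{before} taking the kernel $H^1(\Gal(\overline{k}/k),H_k^{\rm adj}(\overline{k}))$, so that ``$M$ over $A$'' and then ``definiteness at the ramified real places'' appear in that order, and one must correctly match the cohomological datum with the concrete parameter $J$ through the dictionary between involutions and elements $J$ with $\tau(J)=J$. The base-change computation and the bookkeeping that the Construction's data map to forms are routine by comparison.
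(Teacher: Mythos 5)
Your argument is correct and follows essentially the same route as the paper: the paper's own justification is the cohomological dictionary sketched in this subsection (quadratic algebra $A$ from the outer class, then the algebra $(M,\tau)$ from the adjoint/$\mu_3^\xi$ sequence after twisting, then the archimedean definiteness data from $H^1$ of the simply connected group), capped by the citation of \cite[Thm.\ 26.9]{book_of_involutions}, which is exactly the skeleton you flesh out. The only minor point to keep in mind is that the Construction's displayed equation cuts out the unitary group, so the identification with $H_{\overline{k}}$ over $\overline{k}$ implicitly carries the reduced-norm-one condition, but this is a slip inherited from the paper rather than a gap in your reasoning.
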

See \cite[Thm. 26.9]{book_of_involutions}.

It follows that given a form $\tilde{H}$ of $H_k$, the algebras $A$ and $M$ are uniquely determined, as is the definiteness/indefiniteness of $J$.

\subsection{Forms of $\bG_m^2$}

In this section we sketch the basic Galois cohomological concepts related to the classification of tori, more thorough treatments can be found in \cite{ReederElliptic} or \cite{WalkerThesisTori}.

It is well-known that the automorphisms of tori come from automorphisms of the character lattice, from this we conclude:
\begin{prop}
The forms of $\bG_m^2$ are in bijection with:
\[ H^1(\Gal(\overline{k}/k), \GL_2(\bZ)). \]
\end{prop}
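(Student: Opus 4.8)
The statement to prove is that the forms of $\bG_m^2$ over $k$ are classified by $H^1(\Gal(\overline{k}/k), \GL_2(\bZ))$.

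The plan is to invoke the general anti-equivalence between the category of algebraic tori over $k$ (that split over $\overline{k}$) and the category of $\Gal(\overline{k}/k)$-lattices, given by the character functor $T \mapsto X^*(T) = \Hom_{\overline{k}}(T_{\overline{k}}, \bG_m)$. First I would recall that this functor is fully faithful and that, since $\bG_m^2$ is split, its character lattice is the trivial $\Gal(\overline{k}/k)$-module $\bZ^2$. A form $T_k$ of $\bG_m^2$ becomes isomorphic to $\bG_m^2$ over $\overline{k}$, so $X^*(T_k)$ is a $\Gal(\overline{k}/k)$-lattice whose underlying abelian group is $\bZ^2$; conversely any Galois action on $\bZ^2$ (continuous, i.e. factoring through a finite quotient) arises from a unique such form. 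Thus forms of $\bG_m^2$ up to isomorphism correspond to isomorphism classes of continuous $\Gal(\overline{k}/k)$-actions on $\bZ^2$, i.e. to conjugacy classes of continuous homomorphisms $\Gal(\overline{k}/k) \to \Aut(\bZ^2) = \GL_2(\bZ)$.

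Next I would identify this set of conjugacy classes of homomorphisms with $H^1(\Gal(\overline{k}/k), \GL_2(\bZ))$, where $\GL_2(\bZ)$ carries the trivial Galois action. When the coefficient group $\Gamma$ has trivial action, a $1$-cocycle is precisely a homomorphism $\Gal(\overline{k}/k)\to\Gamma$, and two cocycles are cohomologous exactly when the corresponding homomorphisms are conjugate by an element of $\Gamma$; hence $H^1(\Gal(\overline{k}/k), \GL_2(\bZ))$ is canonically the set of $\GL_2(\bZ)$-conjugacy classes of continuous homomorphisms $\Gal(\overline{k}/k)\to\GL_2(\bZ)$. Alternatively, and more in keeping with the style of Proposition \ref{prop:formsofgroups} used elsewhere in this section, one applies that general Galois descent statement directly with $G_k = \bG_m^2$, noting $\Aut((\bG_m^2)_{\overline{k}}) = \GL_2(\bZ)$ as the automorphism group of $\bG_m^2$ over $\overline{k}$ coincides with the automorphism group of its cocharacter (equivalently character) lattice; since $\bG_m^2$ is split this automorphism group is the constant group $\GL_2(\bZ)$ with trivial Galois action, so $H^1(\Gal(\overline{k}/k), \Aut((\bG_m^2)(\overline{k}))) = H^1(\Gal(\overline{k}/k), \GL_2(\bZ))$ as claimed.

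There is no real obstacle here: the result is standard and follows either from the equivalence of categories between tori and Galois lattices (see \cite[Ch.~VIII, \S1]{book_of_involutions} or standard references) or as the special case $G_k = \bG_m^2$ of Proposition \ref{prop:formsofgroups}. The only point requiring a word of care is the identification $\Aut((\bG_m^2)_{\overline{k}}) = \GL_2(\bZ)$, which comes down to the fact that a homomorphism of split tori over a field is determined by the induced map on character lattices, so the automorphisms of $(\bG_m^2)_{\overline{k}}$ are exactly the lattice automorphisms of $\bZ^2$; and the observation that because $\bG_m^2$ is split, the Galois action on this automorphism group is trivial, so no twisting is involved.
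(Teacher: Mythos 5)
Your proposal is correct and follows essentially the same route as the paper: identify $\Aut((\bG_m^2)_{\overline{k}})$ with $\GL_2(\bZ)$ via the character lattice and then specialize the general Galois-descent statement (Proposition \ref{prop:formsofgroups}), which is exactly what the paper does. Your added remarks on continuity of the Galois action and on the triviality of the action on $\GL_2(\bZ)$ (so that cocycles are just homomorphisms up to conjugacy) only make explicit what the paper leaves implicit.
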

This is the specialization of Proposition \ref{prop:formsofgroups} to this context.

Computing $H^1(\Gal(\overline{k}/k), \GL_2(\bZ))$ is, however, unnecessary for our applications, as we are not interested in all forms of $\bG_m^2$, only those that appear in $G$.
We give only sketches of the following, as they will not be needed in the sequel.

\begin{prop}\label{prop:ToriInGroups1}
Fix a split semi-simple algebraic group $G$ of rank $r$, a split maximal torus $T$, with Weyl group $W=N_G(T)/T$. Denote by $O$ the outer automorphism group of $G$ whose action on $G$ stabilizes $T$.
The forms of $T$ appearing in forms of $G$ are a subset of: 
\[ H^1(\Gal(\overline{k}/k), O\ltimes W) \injects H^1(\Gal(\overline{k}/k),\Aut(T)).\]
The map arising from the action of $O\ltimes W$ on the root lattice of $G$.
\end{prop}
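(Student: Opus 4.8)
The plan is to realize a maximal torus $T'$ of a form $G'$ of $G$ by a descent cocycle valued in the stabilizer of $T$ in $\Aut(G)$, and then to read off the form of $T'$ from the induced action on the character lattice. So fix a form $G'$ of $G$ over $k$ together with a maximal $k$-torus $T'\subseteq G'$, and choose an isomorphism $\phi\colon G'_{\overline{k}}\xrightarrow{\sim}G_{\overline{k}}$. Since all maximal tori of $G_{\overline{k}}$ are conjugate under $G_{\overline{k}}(\overline{k})$, after composing $\phi$ with an inner automorphism we may assume $\phi(T'_{\overline{k}})=T_{\overline{k}}$; then the cocycle $c_\sigma=\phi\circ{}^{\sigma}\phi^{-1}\in\Aut(G)(\overline{k})$ normalizes $T_{\overline{k}}$, and so defines a class in $H^1(\Gal(\overline{k}/k),N)$, where $N\subseteq\Aut(G)$ is the stabilizer of $T$. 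Using the split pinning of $G$ and the exact sequences $1\to Z(G)\to G\to G^{\mathrm{adj}}\to 1$ and $1\to G^{\mathrm{adj}}\to\Aut(G)\to O\to 1$ one checks $N=\bigl(N_G(T)/Z(G)\bigr)\rtimes O$; dividing by the image of $T$ in $N$, which acts trivially on $T$ itself, yields a quotient $N\big/(T/Z(G))$ isomorphic to $O\ltimes W$ and carrying trivial Galois action because $G$ and its pinning are defined over $k$ with $T$ split. Pushing $[c]$ forward along $N\surjects O\ltimes W$ produces a class $[\overline{c}]\in H^1(\Gal(\overline{k}/k),O\ltimes W)$.

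The next step is to identify $[\overline{c}]$ with the class of $T'$. The form $T'$ of $T$ is classified by the restricted cocycle $\sigma\mapsto c_\sigma|_{T_{\overline{k}}}\in\Aut(T)(\overline{k})=\GL(X^*(T))$; since inner automorphisms by elements of $T$ restrict to the identity on $T$, this cocycle factors through $N\big/(T/Z(G))=O\ltimes W$ and equals $\overline{c}$ followed by the natural representation of $O\ltimes W$ on $X^*(T)$ — the action on the root lattice of $G$ referred to in the statement. Hence the class of $T'$ in $H^1(\Gal(\overline{k}/k),\Aut(T))$ lies in the image of $H^1(\Gal(\overline{k}/k),O\ltimes W)$, which gives the asserted containment.

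Finally, one must address the injectivity of $H^1(\Gal(\overline{k}/k),O\ltimes W)\to H^1(\Gal(\overline{k}/k),\Aut(T))$. The representation $O\ltimes W\hookrightarrow\GL(X^*(T))$ is faithful: $W$ acts faithfully already on the root lattice, the diagram automorphisms comprising $O$ act faithfully, and $O\cap W=\{1\}$ in $\GL(X^*(T))$ because a Weyl element permuting the simple roots must preserve the set of positive roots and hence be trivial. I expect the passage from this faithfulness to injectivity on $H^1$ to be the main obstacle: two cocycles valued in $O\ltimes W$ that become conjugate in $\GL(X^*(T))$ need not a priori be conjugate inside $O\ltimes W$, so one has to invoke the rigidity statement that a Galois-twisted form of the lattice $X^*(T)$ admits at most one Galois-stable root system of the prescribed type up to isomorphism — equivalently, that $O\ltimes W$ is the full stabilizer in $\GL(X^*(T))$ of that root system and that the relevant difference cocycle is cohomologically trivial. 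In the cases needed below, where $X^*(T)$ is the $\mathrm{G}_2$ root lattice and $O$ is trivial, this last point reduces to an elementary computation inside $\GL_2(\bZ)$.
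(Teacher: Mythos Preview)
Your argument follows essentially the same path as the paper's sketch: choose an isomorphism $\phi$ carrying $T'_{\overline{k}}$ to $T_{\overline{k}}$, observe that the descent cocycle $\phi\circ{}^{\sigma}\phi^{-1}$ lands in the stabilizer of $T$ inside $\Aut(G)$, and then project to $O\ltimes W$ to read off the form of $T'$. The paper phrases the intermediate step slightly differently, working with $O\ltimes N_G(T)$ rather than the stabilizer in $\Aut(G)$ and citing \cite[Lem.~6.10]{PlatinovRapinchuk} for the surjectivity of $H^1(\Gal(\overline{k}/k),O\ltimes N_G(T))\to H^1(\Gal(\overline{k}/k),\Aut(G))$, but this is only a bookkeeping difference and your direct use of the stabilizer is arguably cleaner.

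Where you diverge is in your treatment of the injectivity of $H^1(\Gal(\overline{k}/k),O\ltimes W)\to H^1(\Gal(\overline{k}/k),\Aut(T))$. The paper simply asserts this with the symbol $\hookrightarrow$ and offers no justification in the sketch; you correctly flag that faithfulness of $O\ltimes W\hookrightarrow\GL(X^*(T))$ does not by itself force injectivity on $H^1$, and you outline the additional rigidity input one would need. That caution is appropriate: the point is genuinely not addressed in the paper's proof, and your reduction in the $\mathrm{G}_2$ case to a finite check inside $\GL_2(\bZ)$ is the honest way to close the gap for the application at hand.
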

\begin{proof}[Sketch of Proof]
We remark that we may actually select a representative section for $O$ acting as automorphisms of $G$ so that $O$ preserves $T$.
Denote by $N$ the normalizer in $G$ of $T$ so that $W=N/T$.
We obtain maps $O\ltimes W \rightarrow \Aut(T)$ and $O\ltimes N \rightarrow \Aut(G)$ and thus maps:
\[ H^1(\Gal(\overline{k}/k), O\ltimes W) \injects H^1(\Gal(\overline{k}/k),\Aut(T)) \]
and
\[ H^1(\Gal(\overline{k}/k), O\ltimes N) \surjects  H^1(\Gal(\overline{k}/k), \Aut(G)). \]
The surjectivity of the second map is deduced from \cite[Lem 6.10]{PlatinovRapinchuk}.

An element in the image of $H^1(\Gal(\overline{k}/k), O\ltimes N)$ now twists $T$ in two ways.
Firstly via the map to $H^1(\Gal(\overline{k}/k),\Aut(T))$ and secondly via its restriction to $T$ in $H^1(\Gal(\overline{k}/k), \Aut(G))$.
One checks that these give the same twisting.

Finally, that all tori in forms of $G$ arise this way follows by the observation that if we select any other torus $\tilde{T} \subset \tilde{G}$ then there is an $\overline{k}$ map $\phi: G_{\overline{k}} \rightarrow \tilde{G}_{\overline{k}}$ taking $T$ to $\tilde{T}$. One checks that we may use $\phi\gamma(\phi^{-1})$ to obtain the desired element of $H^1(\Gal(\overline{k}/k), O\ltimes N)$.
\end{proof}

\begin{prop}\label{prop:ToriInGroups2}
Fix a split semi-simple algebraic group $G$, a split maximal torus $T$ with Weyl group $W$. Denote by $O$ the outer automorphism group of $G$ whose action on $G$ stabilizes $T$.
Fix $\Phi$ the collection of weights of $G$ appearing in a faithful $k$-rational representation $\rho$ of $G$.
Denote by $\Sigma_\Phi$ the symmetric group on $\Phi$.
The map $W\times O \injects \Sigma_\Phi$ induces a map:
\[ \varphi : H^1(\Gal(\overline{k}/k), O\ltimes W) \rightarrow H^1(\Gal(\overline{k}/k),  \Sigma_\Phi). \]
The set $H^1(\Gal(\overline{k}/k),  \Sigma_\Phi)$ classifies \'etale algebras of dimension $\abs{\Phi}$.

Finally, if $T\injects G$ is associated to $\xi\in H^1(\Gal(\overline{k}/k), O\ltimes W)$ then $\varphi(\xi)$ is associated to an algebra $E$ with $T\injects T_E = \Res_{E/k}(\bG_m)$.
The subtorus $T \subset T_E$ is defined by equations induced from relations on the characters $\Phi$ of $T_E$ coming from the relations on $\Phi$ viewed as weights of $G$.
\end{prop}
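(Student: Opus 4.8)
The plan is to construct the homomorphism $O\ltimes W\to\Sigma_\Phi$ purely group-theoretically and then apply the covariant functor $H^1(\Gal(\overline{k}/k),-)$. First I would note that $W$ acts on the character lattice $X^*(T)$ and stabilizes $\Phi$, because $\Phi$ is the weight set of a representation; after replacing $\rho$ by $\bigoplus_{o\in O}\rho^{o}$ if necessary (this preserves faithfulness and $k$-rationality, since for a split $G$ the chosen section $O\to\Aut(G)$ is defined over $k$) we may and do assume $\Phi$ is also $O$-stable. Since $O$ normalizes $W$ inside $\Aut(X^*(T))$, restricting the linear action on $X^*(T)$ to the finite set $\Phi$ gives a homomorphism $O\ltimes W\to\Sigma_\Phi$; it is injective because $\rho$ is faithful, so $\Phi$ spans $X^*(T)\otimes_{\bZ}\bQ$ and any element of $O\ltimes W\subset\Aut(X^*(T))$ fixing $\Phi$ pointwise is the identity. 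Functoriality of $H^1$ then produces $\varphi$.

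Next I would invoke the standard dictionary (see \cite[\S 18]{book_of_involutions}) identifying degree-$n$ \'etale $k$-algebras with $\Gal(\overline{k}/k)$-sets of cardinality $n$, via $E\mapsto\Hom_{k\text{-alg}}(E,\overline{k})$; the forms of the trivial $\Gal$-set $\{1,\dots,n\}$ are classified by $H^1(\Gal(\overline{k}/k),\Sigma_n)$. Applying this with $n=\abs{\Phi}$ turns $\varphi(\xi)$ into an \'etale algebra $E$ with $\dim_k E=\abs{\Phi}$ whose associated $\Gal$-set is $\Phi$ with the action twisted by $\varphi(\xi)$.

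The heart of the matter is the compatibility with the torus embedding. Over $\overline{k}$ one has $(T_E)_{\overline{k}}=\bG_m^{\Phi}$, whose character lattice is the permutation module $\bZ[\Phi]$, and the $\bZ$-linear map $\bZ[\Phi]\to X^*(T)$, $[\lambda]\mapsto\lambda$, is $O\ltimes W$-equivariant by construction and surjective whenever $\Phi$ generates $X^*(T)$; one can always arrange the latter by enlarging $\rho$, and in the application to $G$ of type $G2$ with $\rho=\bO^0$ it holds automatically since the short roots together with $0$ already generate the (root $=$ weight) lattice of $G2$. Dualizing gives a closed immersion $T_{\overline{k}}\injects (T_E)_{\overline{k}}$. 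Now the cocycle $\xi$ twists the source via $O\ltimes W\to\Aut(T)$, exactly as in Proposition~\ref{prop:ToriInGroups1}, and it twists the target via $O\ltimes W\to\Sigma_\Phi$, which is by construction the twist defining $E$; the equivariance of $[\lambda]\mapsto\lambda$ shows the $\overline{k}$-immersion is Galois-equivariant for the two twisted structures, hence descends to a closed immersion $T\injects T_E$ over $k$. Finally $T$ is cut out inside $T_E$ as the common kernel $\bigcap_{\chi}\Ker(\chi)$ of the characters $\chi\in X^*(T_E/T)=\Ker(\bZ[\Phi]\to X^*(T))$, that is, of the integral relations $\sum_{\lambda}n_{\lambda}\lambda=0$ among the elements of $\Phi$ regarded as weights of $G$ --- which is precisely the claimed description.

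I expect the only real difficulty to be this last descent step: one must verify that the single class $\xi$ acts compatibly on both sides of the weight map --- i.e.\ that twisting $T$ through $O\ltimes W\to\Aut(T)$ and twisting $T_E$ through $O\ltimes W\to\Sigma_\Phi$ are intertwined by the $\overline{k}$-morphism above --- and keep careful track of the hypothesis on $\rho$ needed to make the map a genuine closed immersion rather than merely an isogeny onto its image.
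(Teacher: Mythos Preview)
Your argument is correct. The paper's own proof takes a more concrete and much shorter route: rather than working with character lattices and descent, it simply \emph{defines} $E$ as the center of the centralizer of $\rho(T)$ inside $\End_k(V)$, observes that the primitive idempotents of $E\otimes_k\overline{k}$ are the projections onto the weight spaces of $\rho$ (hence are naturally indexed by $\Phi$), and reads off that the Galois action on these idempotents is exactly the one dictated by $\varphi(\xi)$; the embedding $T\injects T_E$ is then immediate, since $\rho(T)$ already lies in $E^\times$. Your approach via the $O\ltimes W$-equivariant surjection $\bZ[\Phi]\to X^*(T)$ and Galois descent is the dual picture: it makes the compatibility of the twist on both sides --- which you rightly flagged as the delicate point --- fully explicit, whereas the paper's sketch leaves this to the reader. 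Incidentally, your concern about $\Phi$ generating $X^*(T)$ is moot in characteristic zero: faithfulness of $\rho$ forces the kernel of $T\to\GL(V)$ to be the trivial diagonalizable group, so $\langle\Phi\rangle=X^*(T)$ automatically and no enlargement of $\rho$ is needed.
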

\begin{proof}[Sketch of Proof]
The \'etale algebra $E$ may be taken to be the center of the centralizer of $\rho(T)$.
The idempotents of $E$ are the weights of $\rho$ and thus the Galois action on $\Phi$ defined by the map $W\times O \injects \Sigma_\Phi$ determines the algebra $E$.
By construction we have $T\injects T_E = \Res_{E/k}(\bG_m)$.
\end{proof}

\begin{rmk}
One can modify the previous two results to not depend on split groups and split maximal tori by twisting all the exact sequences.
In so doing one replaces for example $\Sigma_\Phi$ by a twisted form $\Sigma_\Phi^\xi$. The cohomology set $H^1(\Gal(\overline{k}/k),  \Sigma_\Phi^\xi)$ still classifies \'etale algebras over $k$, simply with a new base point (in particular the algebra corresponding to the twisting $\xi$).
\end{rmk}

The classification of triples $T_k \injects G_k$ thus amounts to classifying the algebras $E$, describing the subtori $T$, and classifying the rational conjugacy classes of such maps.

\section{Classifications of certain Subgroups of G2}
\label{sec:classifications}

In this section we shall obtain our main results, that is, we shall classify the $G_k(k)$-conjugacy classes of special unitary groups and maximal tori in groups of type G2.

\subsection{Classification of Special Unitary Groups in G2}

\begin{thm}\label{thm:SUclassification}
Fix an octonion algebra $\bO$ over $k$ and let $G_k$ be the algebraic group associated to its automorphism group and set $\tilde{G}_{\overline{k}} = G_{\overline{k}}$.
Fix $x\in \bO\otimes_k\overline{k}$ perpendicular to $1_{\bO}$.
Fix $\tilde{H}_{\overline{k}} = \Stab_x(G_{\overline{k}})$ to be the stabilizer in $G_{\overline{k}}$ of $x$ and fix its natural inclusion:
\[ \tilde{f} : \tilde{H}_{\overline{k}} \injects \tilde{G}_{\overline{k}}. \]
Then the set of triples  $f : H_k \rightarrow G_k$ considered up to $G_k(k)$-conjugacy, that become $ \tilde{G}_{\overline{k}}(\overline{k})$-conjugate to $\tilde{f} : \tilde{H}_{\overline{k}} \injects \tilde{G}_{\overline{k}}$ after base change are in natural bijection with the isomorphism classes of quadratic \'etale algebras $A$ over $k$ that have an embedding $A\injects \bO$.
\end{thm}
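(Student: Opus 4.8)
The plan is to set up the bijection in both directions and check they are mutually inverse. Given a triple $f: H_k \injects G_k$ that becomes conjugate to $\tilde f$ over $\overline k$, I would first use the third description of $G$ (the union over $x$ of the $\SU_{A_x^\perp}$) together with Proposition~\ref{prop:FormsA2} to see that $f(H_k)$ must be the stabilizer in $G_k$ of some $k$-rational line $\langle x\rangle$ in $\bO^0$, and hence that $f(H_k) = \SU_{A_x^\perp}$ for some $x\in \bO^0$ with $N_{\bO}(x)\in k^\times$; to this I associate the quadratic \'etale algebra $A = A_x = k\langle x\rangle$. The key point that this is well-defined up to isomorphism is exactly the content of the earlier proposition that $\SU_{A_x^\perp}$ and $\SU_{A_{y}^\perp}$ are $G_k(k)$-conjugate if and only if $A_x\simeq A_{y}$. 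For the reverse direction, given a quadratic \'etale $A$ with an embedding $A\injects \bO$, pick $x\in\bO^0\cap A$ with $N_{\bO}(x)\in k^\times$, form $\SU_{A_x^\perp}\injects G_k$, and check this becomes $G_{\overline k}(\overline k)$-conjugate to $\tilde f$ after base change --- this follows from the transitivity (over $\overline k$) of $G$ on norm-$N_{\bO}(x)$ vectors in $\bO^0$, i.e.\ from the proposition that $G(k)$ acts transitively on elements of $\bO^0$ of a given norm, applied over $\overline k$.

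Next I would verify that these two assignments are inverse to each other. One direction is immediate: starting from $A$, choosing $x\in A\cap\bO^0$, and reading off $A_x$ recovers $A$ since $A_x = k\langle x\rangle = A$. The other direction requires showing that if $f(H_k)=\SU_{A_x^\perp}$ and $\SU_{A_x^\perp}$ is $G_k(k)$-conjugate to $\SU_{A_{y}^\perp}$ then the triples $f$ and the one built from $y$ are $G_k(k)$-conjugate as maps $H_k\to G_k$, not merely that the images are conjugate. Here I would use that $\Stab_x(G)=\SU_{A_x^\perp}$ has trivial outer contribution interfering --- more precisely, that any $G_k(k)$-conjugation carrying one image to the other, possibly composed with an automorphism of $H_k$, realizes an equivalence of triples; and that $\tilde H_{\overline k} = \Stab_x(G_{\overline k})$ is its own normalizer up to the relevant ambiguity, so that the choice of identification $H_k\simeq \SU_{A_x^\perp}$ does not produce extra classes.

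The main obstacle I expect is precisely this last bookkeeping: controlling the difference between "$G_k(k)$-conjugacy of subgroups" and "$G_k(k)$-conjugacy of the maps $f$", and checking that every form $H_k$ of $\tilde H$ which admits \emph{some} embedding into $G_k$ that is $\overline k$-conjugate to $\tilde f$ actually occurs as an $\SU_{A_x^\perp}$ for a $k$-rational $x$. The first issue is handled by noting $N_{G}(\SU_{A_x^\perp})$ is generated by $\SU_{A_x^\perp}$ and the elements $m_a$ (from the proposition on left multiplication by $a\in A_x^1$), together with the observation that conjugation by these induces only inner automorphisms of $\SU_{A_x^\perp}$, so the set of equivalence classes of maps with given image is a single point; the outer automorphism of type A2 is \emph{not} realized inside $G$, but it also changes the $\overline k$-conjugacy class relative to $\tilde f$ only in a controlled way, and I would check it is absorbed. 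The second issue --- $k$-rationality of $x$ --- is where the descent argument lives: a priori $x$ only exists over $\overline k$, but since $\SU_{A_x^\perp}$ is defined over $k$, the non-isotropic subspace it stabilizes in $\bO^0$ (as in the third description of $G$) is $k$-rational, and we may choose a $k$-rational basis vector $x$ of it; the resulting $A_x$ then embeds in $\bO$ over $k$. Once these two points are in place the bijection and naturality are formal.
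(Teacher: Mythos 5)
Your core argument is the same as the paper's: identify the image of $f$ as the stabilizer of a (necessarily unique, non-isotropic) line in $\bO^0$, observe by Galois descent that this line is $k$-rational because the subgroup is, associate the quadratic \'etale algebra $A_x$ generated by a $k$-rational point, and use the transitivity statements (over $\overline{k}$ on non-isotropic lines, over $k$ on vectors of a fixed norm) to see the assignment is a bijection. That part is fine and matches the paper's proof of Theorem \ref{thm:SUclassification}, which simply reduces the whole classification to $G_k(k)$-conjugacy classes of $k$-rational lines; since $f$ is an embedding and the source $H_k$ is part of the data only up to isomorphism, conjugacy of triples is the same as conjugacy of the image subgroups, so the extra bookkeeping you attempt is not actually needed. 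However, the claims you make in that extra step are wrong and should be dropped or corrected: the elements $m_a$ (left multiplication by $a\in A_x^1$) are \emph{not} automorphisms of $\bO$ unless $a^3=1$, so they do not generate the normalizer of $\SU_{A_x^\perp}$ in $G$; and the outer automorphism of the A2 subgroup \emph{is} realized inside $G_k(k)$ --- Proposition \ref{prop:normalizer} exhibits the element sending $x\mapsto -x$ and fixing $y,z$, which normalizes $\Stab_x(G_k)$ and induces conjugation on the Hermitian space $A_x^\perp$, hence the outer automorphism. This fact only helps (it collapses, rather than creates, classes of maps with a given image), and it is exactly what the paper later uses when showing the two six-dimensional representations of A2 give $G_k(k)$-conjugate embeddings; your statement to the contrary would, if taken seriously, leave the maps-versus-images comparison unjustified, so either delete that discussion or replace it with the normalizer description of Proposition \ref{prop:normalizer}.
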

\begin{proof}
It is clear that as the group $\tilde{H}_{\overline{k}}$ is defined as the stabilizer of an element $x$, any $G_{\overline{k}}(\overline{k})$-conjugate must also be the stabilzer of an element $x'\in \bO^0\otimes_k\overline{k}$.
We could equivalently have defined it to be the stabilizer of the line $\overline{k}x'$ spanned by $x'$.
Each $G_{\overline{k}}(\overline{k})$-conjugate of $\tilde{H}_{\overline{k}}$ stabilizes a unique line in $\bO^0$, and the $k$-rationality of the conjugate is equivalent to the $k$-rationality of this line.
It follows that the collection of triples $f : H_k \rightarrow G_k$ we are interested in is in bijection with $G_k(k)$-conjugacy classes of $k$-rational lines which are $\tilde{G}_{\overline{k}}(\overline{k})$-conjugate to the line $\overline{k}x$.

We have seen that the automorphism group of the octonions acts transitively on lines containing an element of any given norm.
In particular $\tilde{G}_{\overline{k}}(\overline{k})$ acts transitively on non-isotropic lines, whereas $G_k(k)$ acts transitively on lines with a $k$-rational point of a fixed norm.
The norm of a $k$-rational point on a non-isotropic $k$-rational line is well defined up to $(k^\times)^2$.
Such a point uniquely determines a quadratic \'etale algebra $A\simeq A_x$ together with an embedding $A\injects \bO$.
Conversely any quadratic \'etale algebra $A$ embedding in $\bO$ determines such a $k$-rational line up to $G_k(k)$-conjugacy.
\end{proof}

\begin{cor}
Fix an octonion algebra $\bO$ over $k$ and let $G_k$ be the algebraic group associated to its automorphism group.
The isomorphism classes of simply connected A2 subgroups embedding in $G_k$ are special unitary groups for quadratic \'etale algebras $A$ which embed in $\bO$.
For each $A$ embedding in $\bO$ there is a unique isomorphisms class of special unitary group over $k$ embedding in $G_k$.
\end{cor}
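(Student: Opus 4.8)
The plan is to extract the groups $H_k$ appearing in Theorem \ref{thm:SUclassification} from the third description of the automorphisms of $\bO$ given above, where it was shown that $\Stab_x(G_k) = \SU_{A_x^\perp}$, and then to combine this with the bijection of Theorem \ref{thm:SUclassification}.

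I would start from a quadratic \'etale algebra $A$ over $k$ together with an embedding $A \injects \bO$, and choose a generator $x \in \bO^0$ of its image with $N(x) \in k^\times$, so that $A_x \simeq A$. The subgroup of $G_k$ attached to this datum in the proof of Theorem \ref{thm:SUclassification} is the stabilizer $\Stab_x(G_k)$, which by the third description of automorphisms equals $\SU_{A_x^\perp}$, the special unitary group of the non-degenerate rank-$3$ $A_x$-Hermitian space $A_x^\perp$; this is a simply connected group of type $A_2$. By Theorem \ref{thm:SUclassification} the $G_k(k)$-conjugacy class of this subgroup depends only on the isomorphism class of $A$, hence so does its $k$-isomorphism class. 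This gives the second sentence of the corollary, and it also shows that each $\SU_{A_x^\perp}$ with $A \injects \bO$ actually occurs as a simply connected $A_2$ subgroup of $G_k$.

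Conversely, let $\iota : H_k \injects G_k$ be a closed immersion with $H_k$ simply connected of type $A_2$. Over $\overline{k}$ the group $\tilde H_{\overline{k}} = \Stab_x(G_{\overline{k}})$ is $\SL_{3,\overline{k}}$ embedded in $G_{\overline{k}}$ as the long-root $A_2$ subgroup, and any two closed immersions of the simply connected group of type $A_2$ into $G_{\overline{k}}$ are $G_{\overline{k}}(\overline{k})$-conjugate: this is classical, since by Borel--de Siebenthal the maximal-rank semisimple subgroups of $G_{\overline{k}}$ are $\SL_3$ and $\SO_4$ (the short roots of $G_2$ do not span a closed subsystem, so there is no competing $A_2$), every $A_2$ subgroup has maximal rank, and the normalizer of the long-root $\SL_3$ realizes its outer automorphism. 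Hence $\iota_{\overline{k}}$ is conjugate to $\tilde f$, so $\iota$ is one of the embeddings classified by Theorem \ref{thm:SUclassification}, and that theorem then identifies $H_k$ with $\SU_{A_x^\perp}$ for the quadratic \'etale algebra $A \injects \bO$ it produces. This completes the first sentence.

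The only step that is not a formal consequence of Theorem \ref{thm:SUclassification} together with the identity $\Stab_x(G_k) = \SU_{A_x^\perp}$ is the determination of the $\overline{k}$-conjugacy classes of $A_2$ subgroups of $G_{\overline{k}}$ (equivalently, that $\Stab_x$ is, up to conjugacy, the unique simply connected $A_2$ subgroup), and I expect this to be the main and essentially only obstacle.
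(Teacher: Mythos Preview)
Your argument is correct and isolates the same ``only new claim'' as the paper: every simply connected $A_2$ subgroup of $G_{\overline{k}}$ is $G_{\overline{k}}(\overline{k})$-conjugate to $\tilde H_{\overline{k}}=\Stab_x(G_{\overline{k}})$, so Theorem \ref{thm:SUclassification} applies to an arbitrary embedding $H_k\hookrightarrow G_k$. The paper establishes this point by a different, more internal argument: since $A_2$ and $G_2$ have the same rank, a maximal torus of the $A_2$ subgroup is already maximal in $G_k$, and this forces the weights of the resulting six-dimensional representation on the complement of the fixed line; there are only two such representations of $A_2$, they are isomorphic, and are exchanged by the outer automorphism, hence give the same embedded subgroup. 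Your route via Borel--de Siebenthal reaches the same conclusion by quoting the classification of maximal-rank subgroups and noting (correctly) that the short roots of $G_2$ do not form a closed subsystem, so the long-root $A_2$ is the unique one. Your approach is cleaner to cite; the paper's stays within the representation-theoretic language already set up and avoids appealing to an external classification. Both need the observation that the normalizer of $\tilde H_{\overline{k}}$ realizes the outer automorphism, which you state and the paper records via the forward reference to Proposition \ref{prop:normalizer}.
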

\begin{proof}
The only new claim here is that there are no other embeddings of A2 subgroups in $G_k$ besides those already being considered.
Indeed, any such embedding must take a maximal torus of the A2 subgroup to that of $G_k$. This maximal torus determines uniquely the underlying representation.
There are only two six dimensional representations of A2. They are isomorphic and are interchanged by the outer automorphism of A2 hence give the same embedded subgroup in G2.

Note that we shall see that $G_k(k)$ contains an element acting as the outer automorphism (Prop \ref{prop:normalizer}), hence these representations are moreover $G_k(k)$-conjugate.
\end{proof}

\begin{cor}\label{cor:SUoverR}
Fix an octonion algebra $\bO$ over $k$.
The isomorphism classes of simply connected A2 subgroups embedding in $G_k=\Aut_k(\bO)$ are controlled by the structure of $\bO$ at the real places of $k$.
In particular, fixing a real place $k_\nu$ of $k$ we have the following cases:
\begin{itemize}
\item If $\bO$ is definite at $k_\nu$ then all $A$ appearing are imaginary extensions of $k_\nu$ (the place $\nu$ ramifies in $A$) and the special unitary group $H_{k_\nu}$ is definite.
\item If $\bO$ is indefinite at $k_\nu$ and $A$ is an imaginary extensions of $k_\nu$ then the special unitary group $H_{k_\nu}$ is indefinite.
\item If $\bO$ is indefinite at $k_\nu$ and $A$ is a real extension of $k_\nu$ (the place $\nu$ splits in $A$), then the group $H_{k_\nu}$ is split, that is, $H_{k_\nu} \simeq \SL_{3,k_\nu}$.
\end{itemize}
\end{cor}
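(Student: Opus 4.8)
The plan is to combine Theorem~\ref{thm:SUclassification} with the classification of octonion algebras over $\bR$ and a short signature count. By Theorem~\ref{thm:SUclassification}, up to $G_k(k)$-conjugacy the A2 subgroups under consideration are the groups $\SU_{A_x^\perp}$ attached to quadratic \'etale algebras $A\cong A_x$ that embed in $\bO$, so the isomorphism class of $H_{k_\nu}$ over a real place $\nu$ depends only on $\bO_{k_\nu}$ together with $A_\nu$, and $A_\nu$ is either $\bC$ or $k_\nu\times k_\nu$. I would first recall that $\bO_{k_\nu}$ is either definite, with $N$ positive definite (so that $\bO^0$ has signature $(7,0)$), or indefinite, with $N$ of signature $(4,4)$ (so that $\bO^0$ has signature $(3,4)$, using $N(1_\bO)=1$).

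Suppose first that $\bO$ is definite at $\nu$. Then every nonzero $x\in\bO^0$ has $N(x)>0$, hence $x^2=-N(x)<0$, so $A_x\cong\bC$; this shows every quadratic \'etale algebra embedding in $\bO_{k_\nu}$ is imaginary, i.e.\ $\nu$ ramifies in $A$. Moreover, $N$ restricts to a positive definite form on $A_x^\perp$, and since $(y,z)=\Tr_{A_x/k_\nu}(H(y,z))$ the $A_x$-Hermitian form $H$ on the rank three space $A_x^\perp$ is positive definite; therefore $H_{k_\nu}\cong\SU(3)$ is the definite form.

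Now suppose $\bO$ is indefinite at $\nu$. If $A$ is a real extension of $k_\nu$, so $A_x\cong k_\nu\times k_\nu$ with $\sigma$ interchanging the factors, then the unitary group of any nondegenerate rank three $A_x$-Hermitian form is $\GL_{3,k_\nu}$ and its special unitary group is $\SL_{3,k_\nu}$, so $H_{k_\nu}\cong\SL_{3,k_\nu}$ is split. If instead $A$ is imaginary, pick $x\in\bO^0$ with $A_x\cong\bC$, i.e.\ $N(x)>0$; then $A_x=k_\nu\cdot 1\oplus k_\nu\cdot x$ has signature $(2,0)$ inside $\bO$, so $A_x^\perp$ has signature $(2,4)$. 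A rank three $\bC$-Hermitian form whose underlying real quadratic form has signature $(2,4)$ has Hermitian signature $(1,2)$, whence $H_{k_\nu}\cong\SU(2,1)$ is indefinite.

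I expect the last signature computation --- equivalently, ruling out that the compact $\SU(3)$ embeds in the split form of type G2 --- to be the only point needing care. It can alternatively be dispatched by a dimension count: the image of a compact $\SU(3)$ in the split real form of type G2 would lie in a maximal compact subgroup, of real dimension $6<8=\dim\SU(3)$, which is impossible, so in the indefinite case an imaginary $A$ necessarily produces the indefinite form $\SU(2,1)$. Assembling the definite, indefinite-imaginary, and indefinite-real cases then yields the three bullet points of the statement.
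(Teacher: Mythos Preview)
Your argument is correct and follows essentially the same route as the paper's proof, just with the signature bookkeeping spelled out explicitly where the paper only gestures: the paper disposes of the definite case by noting that $G_{k_\nu}(k_\nu)$ is compact, of the indefinite--imaginary case by saying the Hermitian form ``must be indefinite,'' and of the split case by invoking uniqueness of the Hermitian space over $k_\nu\times k_\nu$. Your alternative dimension check against the maximal compact of the split real $G_2$ is a pleasant extra sanity check that the paper does not include.
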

\begin{proof}
The first case follows from the observation that $G_{k_\nu}(k_\nu)$ is compact.
The second case follows from the observation that the Hermitian form which arises must be indefinite.
The third case is a consequence of the fact that this is the unique Hermitian space for a split extension.
\end{proof}

\begin{rmk}
The forms of A2 whose construction involves non-trivial cubic division algebras do not appear.
This can be seen in two ways, firstly they do not have (non-trivial) representations of dimension six, secondly such forms do not arise from Hermitian structures on $A$-vector spaces whereas the forms we construct do.
\end{rmk}

\subsection{Classification of Maximal Tori in G2}

\begin{prop}
Every maximal torus $T_k$ of $G_k$ factors through a unique A2 subgroup $H_k$.
\end{prop}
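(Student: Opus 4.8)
The plan is to realize every maximal torus $T_k \subset G_k$ as the image of the map $\Phi$ from Proposition~\ref{prop:amaxtori} associated to a suitable quadratic \'etale subalgebra $A_x \subset \bO$, and to recognize the A2 subgroup as $\SU_{A_x^\perp}$ for that same $A_x$. The first step is to pass to $\overline{k}$, where everything is split: there $T_{\overline{k}}$ is a maximal torus of the split $G_2$, and one knows concretely (from the discussion following Proposition~\ref{prop:amaxtori}, i.e. the description $G = \bigcup_x \SU_{A_x^\perp}$ together with the fact that the image of $\Phi$ is its own centralizer) that $T_{\overline{k}}$ is contained in a unique $\SU_{A_x^\perp}$-type subgroup. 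Concretely, $T$ acts on $\bO^0_{\overline{k}}$ with a fixed one-dimensional subspace: over $\overline{k}$ the torus $(T_{A_x,\sigma})^2$ acts trivially exactly on $\langle 1\rangle \oplus$ (nothing else in $\bO^0$) — wait, more precisely it fixes pointwise precisely the subalgebra on which $x$ acts, so the torus singles out a line $\overline{k}x$. This gives the candidate $x$ canonically from $T$.

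Second, I would argue rationality: since $T_k$ is defined over $k$, the line $\overline{k}x$ it determines is Galois-stable, hence is of the form $\overline{k}\cdot x_0$ for some $x_0 \in \bO^0$, and $A_{x_0}$ is a genuine quadratic \'etale $k$-subalgebra of $\bO$ with $A_{x_0}^\perp$ its $A_{x_0}$-Hermitian complement. The uniqueness claim reduces to: the line $\overline{k}x$ is intrinsic to $T$ (it is the unique non-isotropic line fixed pointwise by $T_{\overline{k}}$, or equivalently by $Z_{G}(T)$), so any A2 subgroup $H_k \supseteq T_k$ — being, by the Corollary following Theorem~\ref{thm:SUclassification}, necessarily of the form $\SU_{A_y^\perp} = \Stab_{y}(G)$ for some quadratic \'etale $A_y$ — must have $y$ spanning that same line, whence $A_y = A_{x_0}$ and $H_k$ is determined.

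Third, existence: I need to check that $T_k$ actually lands in $\SU_{A_{x_0}^\perp}$ over $k$, not merely over $\overline{k}$. This follows because $\SU_{A_{x_0}^\perp} = \Stab_{x_0}(G_k)$ is defined over $k$, and containment of $k$-subgroups can be checked after base change to $\overline{k}$, where we have already established $T_{\overline{k}} \subset \Stab_x(G_{\overline{k}})$. So $T_k \subset H_k := \Stab_{x_0}(G_k)$, an A2 subgroup (it is a form of A2 by Corollary, being $\SU$ of a rank-three $A_{x_0}$-Hermitian space), and it is maximal in $H_k$ since $\dim T_k = 2 = \operatorname{rank}(\SL_3)$.

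The main obstacle I expect is pinning down the intrinsic characterization of the line $\overline{k}x$ from $T$ cleanly — i.e. verifying that over $\overline{k}$ a maximal torus of $G_2$ has a \emph{unique} fixed line in the $7$-dimensional representation, and that this line is always non-isotropic. This is a computation in the root system of $G_2$ acting on the $7$-dimensional fundamental representation: the weights are $0$ together with the six short roots, so the zero weight space is one-dimensional, giving the unique fixed line, and one checks it pairs non-trivially with itself (it is the span of $1 \in \bO$, up to the splitting). Once that weight-space bookkeeping is in hand, the rationality and uniqueness steps are formal descent arguments, and I do not anticipate difficulty there.
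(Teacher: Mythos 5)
Your proposal is essentially the paper's own argument: identify the (unique, one-dimensional) zero-weight line of $T$ acting on $\bO^0$, note it is $k$-rational by descent and non-isotropic (the paper checks this by conjugating over $\overline{k}$ to the explicit torus of Proposition \ref{prop:amaxtori}, you by the pairing of weight spaces), and conclude $T_k\subset \Stab_{x_0}(G_k)=\SU_{A_{x_0}^\perp}$, with uniqueness forced by uniqueness of the fixed line. The only slip is the parenthetical identifying that line with the span of $1\in\bO$ — since $1\notin\bO^0$, it is spanned by the traceless generator $x$ of $A_x$ — but this does not affect the argument.
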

\begin{proof}
First, we claim that every maximal torus $T_k$ of $G_k$ has in its action on $\bO^0$ a trivial eigenspace spanned by a $k$-rational element $x$. 
The existence of a trivial eigenspace can be checked over $\overline{k}$, that the eigenspace is $k$-rational follows from the fact that the torus is $k$-rational.
We claim that this eigenspace is non-isotropic. This again can be checked over $\overline{k}$ and over the algebraic closure all maximal tori are conjugate, and hence all of these subspaces are conjugate. As there exist maximal tori for which this subspace is non-isotropic (Prop \ref{prop:amaxtori}), it follows that it is non-isotropic for all maximal tori.

It follows that $T_k \injects \Stab_x(G_k)$ embeds into the stabilizer of $x$.
The group $\Stab_x(G_k)$ is precisely one of the groups $H_k$ considered above.
\end{proof}

\begin{thm}\label{thm:ToriInG2}
The $G_k(k)$-conjugacy classes of maximal tori in $G_k$ are in bijection with pairs $(\overline{H},\overline{T})$ consisting of a  $G_k(k)$-conjugacy class $\overline{H}$ of A2 subgroup and an $H(k)$-conjugacy class $\overline{T}$ of maximal torus $T$ in $H$.
\end{thm}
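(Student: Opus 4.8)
The plan is to build the bijection directly from the previous proposition, which already tells us that every maximal torus $T_k\subset G_k$ factors through a \emph{unique} A2 subgroup $H_k = \Stab_x(G_k)$. This uniqueness is the key structural input; the remainder is a formal argument about how $G_k(k)$-conjugacy interacts with this canonical factorization.

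First I would define the map. Given a $G_k(k)$-conjugacy class of maximal tori, pick a representative $T_k$; by the preceding proposition it factors through a unique A2 subgroup $H_k$. Send the class of $T_k$ to the pair $(\overline{H},\overline{T})$, where $\overline{H}$ is the $G_k(k)$-conjugacy class of $H_k$ and $\overline{T}$ is the $H_k(k)$-conjugacy class of $T_k$ inside $H_k$. The first thing to check is that this is well-defined: if $T_k' = gT_kg^{-1}$ for some $g\in G_k(k)$, then $T_k'$ factors through $gH_kg^{-1}$, which by uniqueness is \emph{the} A2 subgroup attached to $T_k'$, so $\overline{H}$ is unchanged; and conjugation by $g$ carries the $H_k(k)$-conjugacy class of $T_k$ in $H_k$ to the $(gH_kg^{-1})(k)$-conjugacy class of $T_k'$ in $gH_kg^{-1}$, which under any identification coming from $G_k(k)$-conjugacy of the ambient A2 subgroups is exactly $\overline{T}$. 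Making this last point precise requires knowing that the stabilizer in $G_k(k)$ of the subgroup $H_k$ maps onto the relevant identifications — but since all we need is that $\overline{T}$ is well-defined as an abstract datum "up to the action of $N_{G_k(k)}(H_k)$", this is clean once phrased correctly.

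Next, injectivity. Suppose $T_k$ and $T_k'$ give the same pair. Then their attached A2 subgroups $H_k$, $H_k'$ are $G_k(k)$-conjugate, say $H_k' = gH_kg^{-1}$ with $g\in G_k(k)$. Replacing $T_k'$ by $g^{-1}T_k'g$ (which does not change its $G_k(k)$-conjugacy class) we may assume $H_k = H_k'$. Now $T_k$ and $T_k'$ lie in the same A2 subgroup $H_k$ and are $H_k(k)$-conjugate by hypothesis, say by $h\in H_k(k)\subset G_k(k)$; hence $T_k$ and $T_k'$ are $G_k(k)$-conjugate. For surjectivity, given any pair $(\overline{H},\overline{T})$ choose representatives $H_k$ and then $T_k\subset H_k$; then $T_k$ is a maximal torus of $G_k$ (a maximal torus of an A2 subgroup has rank $2$, which is the rank of $G_k$), and by construction its image under our map is the class of $(\overline{H},\overline{T})$ — using again the uniqueness proposition to identify the A2 subgroup attached to $T_k$ with $H_k$.

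The main obstacle I anticipate is bookkeeping around the well-definedness of the second coordinate $\overline{T}$: an $H_k(k)$-conjugacy class of tori in $H_k$ is only canonically attached to $T_k$ after we have fixed the A2 subgroup, and different $G_k(k)$-conjugate choices of A2 subgroup differ by an outer ambiguity (conjugation by $N_{G_k(k)}(H_k)$, which by Proposition~\ref{prop:normalizer} includes an element inducing the outer automorphism of A2). The correct formulation is that the data being recorded is the pair consisting of a $G_k(k)$-class $\overline{H}$ together with an $N_{G_k(k)}(H_k)$-orbit of $H_k(k)$-conjugacy classes of tori; with this understood the argument above goes through verbatim, and no hard computation is needed — everything reduces to the uniqueness statement already proved.
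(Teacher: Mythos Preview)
Your argument correctly identifies the one genuine obstacle---the action of $N_{G_k(k)}(H_k)$ on $H_k(k)$-conjugacy classes of tori---but then sidesteps it by reformulating the statement rather than resolving it. The theorem as written asserts a bijection with $H(k)$-conjugacy classes, not with $N_{G_k(k)}(H_k)$-orbits of such classes; your ``correct formulation'' would prove a (formally weaker) result unless you also show that these two notions coincide, which you do not do. So as a proof of the theorem as stated, there is a gap.

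The paper closes exactly this gap. After observing (as you do) that uniqueness of $H$ partitions the $G_k(k)$-classes of tori according to the $G_k(k)$-class of $H$, it argues that distinct $H(k)$-conjugacy classes stay distinct in $G_k(k)$ by showing that any $g\in G_k(k)$ normalizing $H$ sends $T$ to an $H(k)$-conjugate of $T$. The key input (from Proposition~\ref{prop:normalizer}) is not merely that $N_{G_k(k)}(H_k)/H_k(k)$ is generated by an element inducing the outer automorphism of $H_k$, but the concrete description of that outer automorphism: it is adjoint-composed-with-inverse on the Hermitian space $A_x^\perp$. Since maximal tori in unitary groups are self-adjoint, this element actually sends $T$ to $T$. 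Hence $N_{G_k(k)}(H_k)$ acts trivially on $H(k)$-conjugacy classes of maximal tori, and your ``$N_{G_k(k)}(H_k)$-orbit'' collapses to a single $H(k)$-class. Add this one sentence and your argument becomes complete and essentially identical to the paper's.
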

\begin{proof}
Since for each maximal torus $T$ of $G$ the subgroup $H$ containing $T$ is unique, we may partition the $G_k(k)$-conjugacy classes according to the conjugacy class of the group $H$ containing it.

To complete the result  we must now show that distinct $H(k)$-conjugacy classes give distinct $G(k)$-conjugacy classes.
To see this we must show that any element of $G(k)$ which normalizes $H$, takes $T$ to an $H(k)$-conjugate of $T$.
Indeed, the normalizer of $H$ in $G_k$ is generated by a single element $g$ of order $2$ (see Prop \ref{prop:normalizer}) and conjugation by $g$ acts as the outer automorphism of $H$.
The outer automorphism of $H$ comes from the composition of the adjoint map on the Hermitian space and the inverse. As tori in unitary groups are self adjoint, $g$ takes $T$ to $T$.
\end{proof}

\begin{thm}\label{thm:ToriInA2}
Let $(A,\sigma)$ be a quadratic \'etale algebra with non-trivial involution $\sigma$ and let $(M,\tau)$ be a degree $3$ central simple algebra over $A$ with an involution $\tau$ restricting to $\sigma$ on $A$.

The rational conjugacy classes of maximal tori $T$ in $\SU_{A,\tau}$ are in bijection with pairs $(E,\lambda)$ where $E$ is a cubic \'etale algebra over $k$ such that $E \otimes_k A \injects M$ and $\lambda \in \ker(E^{\times}/N_{E\otimes_k A/E}((E\otimes_k A)^\times)W \rightarrow k^\times/N_{A/k}(A^\times))$ where $W=\Aut_k(E)= (N_{\SU_{A,\tau}}(T)/T)(k)$ are the $k$ rational points of the Weyl group of $T$ subject to the additional constraint that at any real place $\nu$ of $k$ where $A$ is ramified and $E$ is totally real, $\lambda$ is totally positive if and only if $\SU_{A,\tau}(k_\nu)$ is compact.

The torus $T$ is isomorphic to $T_{E\otimes_k A,\sigma,N}$, its points over a ring $R$ are:
\[ T_{E\otimes_k A,\sigma,N}(R) = \{ x\in (E\otimes_k A \otimes_k R)^\times \mid x\sigma(x) = 1 \text{ and } N_{E\otimes_k A/A}(x) = 1 \}. \]

Fixing any embedding $E\injects A$, for which $\tau$ restricts to $\sigma$ on  $E\injects A$, the involutions $\tau_\lambda(x) = \lambda^{-1}\tau(x)\lambda$ satisfies $(M,\tau) \simeq (M,\tau_\lambda)$ and thus $\SU_{A,\tau}\simeq \SU_{A,\tau_\lambda}$, the image of $E\injects A$ under the different embeddings give the different conjugacy classes of $T$ as we vary $\lambda$.
\end{thm}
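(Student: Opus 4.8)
The plan is to move freely between maximal tori $T$ of $\SU_{A,\tau}$, their centralizers inside $M$, and embeddings of cubic \'etale $k$-algebras with involution into $(M,\tau)$, in the spirit of Propositions \ref{prop:ToriInGroups1} and \ref{prop:ToriInGroups2}, and then to count conjugacy classes by Galois descent together with Hilbert 90.

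First I would recover the algebra $E$ from the torus. Given a maximal torus $T\subseteq\SU_{A,\tau}$, let $L=Z_M(T)$ be its centralizer in $M$. It suffices to compute over $\overline k$, where $M\otimes_k\overline k$ is a product of two matrix algebras, $T$ becomes the diagonal, and $L$ becomes the product of the two diagonal subalgebras; so $L$ is \'etale of rank $3$ over $Z(M)=A$, and one checks in the same way that $\tau$ stabilises $L$ and restricts there to an involution lying over $\sigma$. Galois descent along the quadratic extension $A/k$ then yields a cubic \'etale $k$-algebra $E:=L^{\tau}$ with $L=E\otimes_k A$ and $\tau|_L=\id_E\otimes\sigma$; in particular $E\otimes_k A\injects M$ and $T=T_{E\otimes_k A,\sigma,N}$ as in the displayed formula. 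Thus every torus has the stated shape and determines the first invariant $E$, while conversely $E$ (with $E\otimes_k A\injects M$) determines $T$ up to isomorphism.

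Next I would count the $\SU_{A,\tau}(k)$-conjugacy classes of tori of a fixed type $E$. Since over $\overline k$ all maximal tori are conjugate, a standard torsor (Galois descent) argument identifies these classes with the fibre over the class of $E$ of $\ker\bigl(H^1(k,N_{\SU_{A,\tau}}(T))\to H^1(k,\SU_{A,\tau})\bigr)\to H^1(k,\underline W)$, where $\underline W=N_{\SU_{A,\tau}}(T)/T$; that fibre is the image of $H^1(k,T)$ modulo the action of $W=H^0(k,\underline W)=\Aut_k(E)$. To make it concrete I would feed the exact sequence
\[ 1\to T_{E\otimes_k A,\sigma,N}\to T_{E\otimes_k A,\sigma}\xrightarrow{\ N_{E\otimes_k A/A}\ }T_{A,\sigma}\to 1 \]
into Galois cohomology; Hilbert 90 gives $H^1(k,T_{E\otimes_k A,\sigma})=E^\times/N_{E\otimes_k A/E}((E\otimes_k A)^\times)$ and $H^1(k,T_{A,\sigma})=k^\times/N_{A/k}(A^\times)$, with connecting map induced by $N_{E/k}$. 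The invariant of a torus is therefore an element $\lambda$ of $\ker\bigl(E^\times/N_{E\otimes_k A/E}((E\otimes_k A)^\times)\,W\to k^\times/N_{A/k}(A^\times)\bigr)$, and the only delicate point in matching this count exactly is checking that the residual ambiguity coming from the image of $T_{A,\sigma}(k)$ in $H^1(k,T)$ contributes nothing and that the only local obstruction surviving in $H^1(k,\SU_{A,\tau})$ is at the real places described below.

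To make the bijection explicit, fix an embedding $\iota_0\colon(E\otimes_k A,\id_E\otimes\sigma)\injects(M,\tau)$ — one exists over $\overline k$, and the hypothesis $E\otimes_k A\injects M$ provides one over $k$. For $\lambda\in E^\times$ the formula $\tau_\lambda(x)=\lambda^{-1}\tau(x)\lambda$ defines a unitary involution of $M$ over $\sigma$ agreeing with $\tau$ on $E\otimes_k A$, so $\iota_0$ is also an embedding into $(M,\tau_\lambda)$; a Skolem--Noether computation shows $(M,\tau_\lambda)\simeq(M,\tau)$ as $k$-algebras with involution exactly when $N_{E/k}(\lambda)=N_{E\otimes_k A/A}(\lambda)$ lies in $N_{A/k}(A^\times)$ and the signatures agree at the real places where $A$ ramifies, and transporting $\iota_0$ through such an isomorphism gives an embedding $\iota_\lambda$ and a torus. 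One then checks that replacing $\lambda$ by $\lambda\cdot N_{E\otimes_k A/E}(\mu)$ alters the isomorphism by $\Int(\mu^{-1})$, which is trivial on the commutative algebra $E\otimes_k A$ and hence leaves $\iota_\lambda$ unchanged; that precomposing $\iota_0$ with $w\otimes 1$ for $w\in\Aut_k(E)$ realises the $W$-action; and that every embedding of $(E\otimes_k A,\id_E\otimes\sigma)$ into $(M,\tau)$ arises as some $\iota_\lambda$. This yields the bijection and the final assertions about $\tau_\lambda$. The main obstacle, as flagged, is the real-place bookkeeping: at a real $\nu$ where $A$ ramifies and $E$ is totally real, $T_{k_\nu}$ is a compact maximal torus of both the compact and the indefinite unitary form, so the torus alone does not see which form $\SU_{A,\tau}(k_\nu)$ is; the reduced-norm surjectivity used above fails there, and one must instead compare eigenvalue signs of the Hermitian form attached to $\tau_\lambda$ to see that $\lambda$ is totally positive precisely when $\SU_{A,\tau}(k_\nu)$ is compact. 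Everything else is descent, Hilbert 90, and the commutativity of $E\otimes_k A$.
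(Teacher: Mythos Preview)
The paper does not actually prove this theorem: immediately after the statement it writes ``This is a specialization of the results of \cite{FioriRoe} to the case of Hermitian spaces of rank $3$, the case of pure inner forms which we shall actually use in the sequel is also covered in \cite[Ex.~6.129]{WalkerThesisTori}.'' So there is nothing in the paper to compare your argument against line by line.

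That said, your outline is exactly the standard route those references take, and it is also the route the paper itself sketches retrospectively in Section~\ref{sec:groupcohom} (see the remarks after Proposition~\ref{prop:gchk}, where the same exact sequence $1\to T\to T_{E\otimes_k A,\sigma}\to T_{A,\sigma}\to 1$ and the identification $H^1(k,T_{E\otimes_k A,\sigma})\simeq E^\times/N_{E\otimes_k A/E}((E\otimes_k A)^\times)$ via Shapiro and Hilbert~90 are invoked). Recovering $E$ as the $\tau$-fixed points of the centralizer $Z_M(T)$, identifying $\underline W(k)$ with $\Aut_k(E)$, and realising the different conjugacy classes via the twisted involutions $\tau_\lambda$ is precisely how these classifications are carried out.

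One small comment on a point you flagged yourself: the ``residual ambiguity coming from the image of $T_{A,\sigma}(k)$ in $H^1(k,T)$'' is not an extra thing to kill. The long exact sequence already tells you that this image is exactly the kernel of $H^1(k,T)\to H^1(k,T_{E\otimes_k A,\sigma})$, so once you pass to the invariant $\lambda\in E^\times/N_{E\otimes_k A/E}((E\otimes_k A)^\times)$ you have already quotiented it out; there is no separate check to do. What does require genuine work, and what the cited references handle, is the identification of the map $H^1(k,N)\to H^1(k,\SU_{A,\tau})$ with the signature constraint at the real places where $A$ ramifies --- this is the only place where information beyond $(E,\lambda)$ is needed, and your description of it is correct.
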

This is a specialization of the results of \cite{FioriRoe}
 to the case of Hermitian spaces of rank $3$, the case of pure inner forms which we shall actually use in the sequel is also covered in \cite[Ex. 6.129]{WalkerThesisTori}.

\begin{rmk}
In the cases that shall be relevant in the sequel, the algebra $M$ shall always be a matrix algebra, in which case, the condition $E\otimes_k A \injects M$ is automatic.
Moreover, in this setting, we may concretely view $\SU_{A,\tau_\lambda}$ as the isomotries of the Hermitian space with Hermitian form:
\[ \Tr_{E\otimes_k A/A}(\lambda\delta x\sigma(y)) \]
where $\delta\in k^\times/N_{A/k}(A^\times)$ is the discriminant of the Hermitian space with which we are working.
\end{rmk}

Putting all these pieces together we obtain our first formulation of the main result.
\begin{thm}\label{thm:ClassifyToriG2}
Fix an octonion algebra $\bO$ over $k$ and the group $G_k=\Aut_k(\bO)$.
The $G_k(k)$-conjugacy classes of maximal tori $T_k$ in $G_k$ are in bijection with triples $(A,E,\lambda)$ consisting of a quadratic \'etale algebra $A$ (with involutions $\sigma$) which embeds in $\bO$, a cubic \'etale algebra $E$ and an element $\lambda\in (E^\times)/N_{E \otimes_k A/E}((E \otimes_k A)^\times)\Aut_k(E)$ such that the $A$-Hermitian space $E\otimes_k A$ of dimension $3$ with Hermitian form:
\[ \Tr_{E\otimes_k A/A}( \lambda x \sigma(y) ) \]
has discriminant $1$ and is positive definite (respectively indefinite) at all the real places of $k$ where the octonion algebra is definite (respectively split).
Note that the form is positive definite if and only if $\lambda$ is totally positive and $E$ is totally real. Moreover, the discriminant of the form is $N_{E\otimes_k A/A}(\lambda)\delta_{E/k}$.

The torus $T_k$ associated to this data is precisely $T_{E\otimes_k A,\sigma,N}$ whose points over $R$ are: 
\[ T_{E\otimes_k A,\sigma,N}(R) = \{ x\in (E\otimes_k A \otimes_k R)^\times \mid x\sigma(x) = 1 \text{ and } N_{E\otimes_k A/A}(x) = 1 \} \]
where $\sigma$ is induced from the non-trivial automorphism of $A$.
\end{thm}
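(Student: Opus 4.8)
The plan is to assemble Theorem~\ref{thm:ClassifyToriG2} from the three structural results that precede it, namely Theorem~\ref{thm:ToriInG2}, Theorem~\ref{thm:ToriInA2}, and Theorem~\ref{thm:SUclassification}, and then to translate the somewhat abstract conditions appearing there into the concrete positivity/discriminant statements in the theorem. First I would invoke Theorem~\ref{thm:ToriInG2}: the $G_k(k)$-conjugacy classes of maximal tori in $G_k$ are pairs $(\overline H,\overline T)$ of a conjugacy class of A2 subgroup together with a rational conjugacy class of maximal torus in it. By Theorem~\ref{thm:SUclassification} (and its corollaries), the classes $\overline H$ are exactly the special unitary groups $\SU_{A_x^\perp}$ attached to quadratic \'etale algebras $A$ admitting an embedding $A\injects\bO$, and in this situation the relevant central simple algebra $M$ over $A$ is a matrix algebra $\Mat_3(A)$, so the hypothesis $E\otimes_kA\injects M$ in Theorem~\ref{thm:ToriInA2} is automatic. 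This reduces the problem to describing, for each such $A$, the rational conjugacy classes of maximal tori in the specific unitary group $\SU_{A_x^\perp}$ cut out of $G_k$.

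Next I would apply Theorem~\ref{thm:ToriInA2} with $M=\Mat_3(A)$ and with the involution $\tau$ the one whose associated Hermitian form is the one appearing in the description of $G$ via the third method in Section~\ref{sec:g2} (the $A_x$-Hermitian form $H$ on $A_x^\perp$). That theorem gives the classes as pairs $(E,\lambda)$ with $E$ a cubic \'etale algebra and $\lambda\in E^\times/N_{E\otimes_kA/E}((E\otimes_kA)^\times)\Aut_k(E)$ lying in the kernel of the norm map to $k^\times/N_{A/k}(A^\times)$, together with the real-place compactness constraint; and it identifies the torus with $T_{E\otimes_kA,\sigma,N}$ as in the displayed formula. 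The remaining work is to show that the kernel condition, together with the real-place constraint of Theorem~\ref{thm:ToriInA2}, is equivalent to the single condition stated here: that the rank-$3$ $A$-Hermitian space $(E\otimes_kA,\ \Tr_{E\otimes_kA/A}(\lambda x\sigma(y)))$ has discriminant $1$ and is positive definite (resp.\ indefinite) at the real places where $\bO$ is definite (resp.\ split). For the discriminant: the discriminant of the Hermitian form $\Tr_{E\otimes_kA/A}(\lambda x\sigma(y))$ on the free rank-$3$ $A$-module $E\otimes_kA$ is computed on the obvious $E$-basis as $N_{E\otimes_kA/A}(\lambda)\,\delta_{E/k}$ (the $\delta_{E/k}$ coming from the trace form $\Tr_{E/k}$), and demanding this equal $1$ in $k^\times/N_{A/k}(A^\times)$ is exactly the kernel condition once one recalls $N_{E\otimes_kA/A}(\lambda)=N_{A/k}$-equivalent data: more precisely, the relation between the two norm maps is dictated by the exact sequence used in the A2 discussion, so discriminant $1$ $\iff$ $N_{E/k}(\lambda)\delta_{E/k}\in N_{A/k}(A^\times)$ $\iff$ $\lambda$ lies in the relevant kernel. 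For the real places: $\bO$ definite at $\nu$ forces $G_{k_\nu}(k_\nu)$ compact (as already used in Corollary~\ref{cor:SUoverR}), hence $\SU_{A_x^\perp}(k_\nu)$ compact, so the compactness constraint of Theorem~\ref{thm:ToriInA2} becomes precisely positive-definiteness; $\bO$ split at $\nu$ forces $\SU_{A_x^\perp}(k_\nu)$ noncompact (indefinite Hermitian or $\SL_3$), giving the indefinite alternative. Finally I would record the elementary remark that $\Tr_{E\otimes_kA/A}(\lambda x\sigma(x))$ on $E\otimes_kA\otimes_k k_\nu$ is positive definite for every real $\nu$ exactly when $E$ is totally real and $\lambda$ totally positive, which is the last sentence of the theorem; and the identification of $T_k$ with $T_{E\otimes_kA,\sigma,N}$ is already the content of Theorem~\ref{thm:ToriInA2}.

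The main obstacle I expect is bookkeeping around the discriminant and the precise form of the kernel condition: one must be careful that the Hermitian form written as $\Tr_{E\otimes_kA/A}(\lambda x\sigma(y))$ (with no extra $\delta$ factor, since $M$ is split and we may take the ambient Hermitian space to be the standard one) genuinely has discriminant $N_{E\otimes_kA/A}(\lambda)\delta_{E/k}$, and that requiring this to be trivial in $k^\times/N_{A/k}(A^\times)$ matches, term for term, the kernel appearing in Theorem~\ref{thm:ToriInA2} after the change of variable $\lambda\mapsto\lambda\delta_{E/k}$ (this shift is exactly why Corollary~\ref{cor:ClassifyToriG2R} phrases things with $\lambda\delta_{E/k}$). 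A secondary subtlety is to confirm that the torus $T_{E\otimes_kA,\sigma,N}$ really does embed in $\SU_{A_x^\perp}\subset G_k$ as the maximal torus predicted — but this is guaranteed abstractly by Theorem~\ref{thm:ToriInA2} combined with the fact from Section~\ref{sec:g2} that $\Stab_x(G_k)=\SU_{A_x^\perp}$, so nothing new needs to be checked. I would close by noting that surjectivity of the correspondence (every maximal torus arises) is Theorem~\ref{thm:ToriInG2} together with the proposition that every maximal torus of $G_k$ factors through a unique A2 subgroup, and injectivity is the statement, already proved in Theorem~\ref{thm:ToriInG2}, that distinct $H(k)$-classes stay distinct in $G_k(k)$ because the normalizer of $H$ acts on its tori through the self-adjointness-preserving outer automorphism.
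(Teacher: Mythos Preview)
Your proposal is correct and follows essentially the same approach as the paper: the paper does not give a separate formal proof of Theorem~\ref{thm:ClassifyToriG2} but simply introduces it with the sentence ``Putting all these pieces together we obtain our first formulation of the main result,'' the pieces being exactly Theorems~\ref{thm:ToriInG2}, \ref{thm:ToriInA2}, \ref{thm:SUclassification} and Corollary~\ref{cor:SUoverR}, combined just as you describe. Your write-up is in fact more explicit than the paper's, and your discussion of the discriminant bookkeeping and the renormalization $\lambda\mapsto\lambda\delta_{E/k}$ anticipates precisely what the paper records afterward in Remark~\ref{rem:renormalize}.
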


\begin{rmk}\label{rem:renormalize}
Replacing $\lambda$ by $\delta_{E/k}\lambda$ we see that satisfying the discriminant condition is always possible. Moreover, as the discriminant of the form $\Tr_{E\otimes_k A/A}( \lambda x \sigma(y) ) $ is precisely $N_{E/k}(\lambda)\delta_{E/k}$
this replacement also would allow us to instead consider $\lambda$ in the kernel of the map 
\[ (E^\times)/N_{E \otimes_k A/E}((E \otimes_k A)^\times)\Aut_k(E) \overset{N_{E/k}}\longrightarrow (k^\times)/N_{A/k}(A^\times) \]
subject to the signature conditions.
We see that locally the kernel of the above map is non-trivial only at places of $k$ which split in $E$ for which the corresponding places in $A$ do not split.
For the purpose of this renormalization and its effect on the following proposition it is useful to note that $\delta_{E/k}$ is positive at a real $\nu$ of $k$ if and only if $\nu$ does not ramify in $E$.
\end{rmk}

\begin{prop}\label{prop:ClassifyToriG2R}
We have the following restrictions on the algebras $A$ and $E$ and the signature of $\lambda$ based on the structure of $\bO$ at each real place $\nu$.
The conditions can be summarized as follows:
\begin{itemize}
\item If $\bO$ is definite at $\nu$ then $A$ is a CM-algebra and $E$ is totally real. Moreover, $\lambda$ is positive at all the real places of $E$ over $\nu$.
\item If $\bO$ is indefinite at $\nu$ then $A$ is arbitrary and $E$ is arbitrary. Furthermore, 
\subitem If $A$ is CM and $E$ is totally real then $\lambda$ is positive at a unique place.
\subitem If $A$ is CM and $E$ is not totally real then $\lambda$ is negative at the unique real place (using the normalization of Theorem \ref{thm:ClassifyToriG2}) or positive at the unique real place (using the normalization of Remark \ref{rem:renormalize}).

\subitem If $A$ is totally real then the choice of $\lambda$ at $\nu$ is irrelevant (the norm map from $E\otimes_k A$ to $E$ is surjective at $\nu$).
\end{itemize}
\end{prop}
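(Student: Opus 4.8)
The plan is to reduce the whole statement to a place-by-place analysis at the real places of $k$. By Theorem~\ref{thm:ClassifyToriG2} the only conditions on a triple $(A,E,\lambda)$ are that the $A$-Hermitian space $E\otimes_k A$ with form $h=\Tr_{E\otimes_k A/A}(\lambda x\sigma(y))$ have discriminant $1$ (a global condition) and be positive definite, respectively indefinite, at the real places where $\bO$ is definite, respectively split; and over $\bR$ an octonion algebra is either definite or split. So I would fix a real place $\nu$, set $A_\nu=A\otimes_k k_\nu$, $E_\nu=E\otimes_k k_\nu$, let $V_\nu$ be the localization of $(E\otimes_k A,h)$ at $\nu$, and translate the three constraints: $V_\nu$ positive definite if $\bO$ is definite at $\nu$; $V_\nu$ indefinite if $\bO$ is split at $\nu$; and the $\nu$-component of $\disc h = N_{E/k}(\lambda)\delta_{E/k}\in k^\times/N_{A/k}(A^\times)$, which forces $N_{E/k}(\lambda)\delta_{E/k}>0$ at $\nu$ when $A$ is CM at $\nu$ (there the local norm group is $\bR_{>0}$) and imposes nothing when $A$ splits at $\nu$ (there the local norm group is all of $\bR^\times$).

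Next I would assemble a small dictionary of local Hermitian pieces. When $A$ splits at $\nu$, $V_\nu$ is a Hermitian space over $(\bR\times\bR,\ \mathrm{swap})$, its special unitary group is $\SL_3(\bR)$, so $V_\nu$ is automatically indefinite and never positive definite regardless of $\lambda$; moreover the norm map $N_{E\otimes_k A/E}$ is $(x,y)\mapsto xy$ there, hence surjective, so the image of $\lambda$ in $E_\nu^\times/N_{E\otimes_k A/E}((E\otimes_k A)_\nu^\times)$ is trivial. When $A$ is CM at $\nu$, so $A_\nu=\bC$, a real place $w\mid\nu$ of $E$ contributes a one-dimensional $\bC$-Hermitian form $x\mapsto\lambda_w x\bar x$ with $\lambda_w\in\bR^\times$, definite of sign $\operatorname{sign}(\lambda_w)$, whereas a complex place $w\mid\nu$ of $E$ contributes the factor $\bC\otimes_\bR\bC\cong\bC\times\bC$ on which $\sigma$ is the swap, and I would check that this forces the induced $\bC$-Hermitian form there to be a hyperbolic plane, with Gram matrix $\smtx{0}{\lambda_w}{\overline{\lambda_w}}{0}$, for every $\lambda_w$.

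With the dictionary in hand I would run the cases. For $\bO$ definite at $\nu$: positive definiteness of $V_\nu$ rules out any split or hyperbolic summand, so $A$ is CM at $\nu$ (which is also Corollary~\ref{cor:SUoverR}) and $E$ is totally real over $\nu$; then $V_\nu\cong\bigoplus_{w\mid\nu}\bC$ with $h=\diag(\lambda_w)_{w\mid\nu}$ and positive definiteness is exactly "$\lambda_w>0$ for all $w\mid\nu$". For $\bO$ split at $\nu$: if $A$ splits at $\nu$ there is no condition on $\lambda$, by the dictionary; if $A$ is CM and $E$ totally real at $\nu$ then $h=\diag(\lambda_{w_1},\lambda_{w_2},\lambda_{w_3})$ and, since $\delta_{E/k}>0$ at $\nu$ (as $\nu$ is unramified in $E$, cf. Remark~\ref{rem:renormalize}), positivity of the discriminant forces an even number of the $\lambda_{w_i}$ to be negative while indefiniteness forbids zero, so exactly one is positive; if $A$ is CM and $E$ is not totally real at $\nu$ then $E_\nu=\bR\times\bC$ with a unique real place $w_0$ and $V_\nu$ is a one-dimensional definite $\bC$-form of sign $\operatorname{sign}(\lambda_{w_0})$ plus a hyperbolic plane, automatically indefinite, while $\disc h$ at $\nu$ is $\operatorname{sign}(\lambda_{w_0})\cdot\delta_{E/k}$ with $\delta_{E/k}<0$ (as $\nu$ ramifies in $E$), so positivity forces $\lambda_{w_0}<0$, which the renormalization $\lambda\mapsto\delta_{E/k}\lambda$ of Remark~\ref{rem:renormalize} turns into "positive". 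Finally I would note that conversely each of these sign conditions, taken together with the global $\disc h=1$, recovers precisely the constraints of Theorem~\ref{thm:ClassifyToriG2}, so they are also sufficient.

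I expect the only genuine work to be the entries of the dictionary in the second step: showing that a complex factor of $E_\nu$ paired against $A_\nu=\bC$, and (more obviously) a split factor of $A_\nu$, always contribute a hyperbolic piece to $V_\nu$ whatever $\lambda$ is, hence indefinite and never definite; and keeping the signs in $\disc h=N_{E/k}(\lambda)\delta_{E/k}$ straight, using that $\delta_{E/k}$ is positive at $\nu$ exactly when $\nu$ is unramified in $E$ and that complex places of $E$ contribute the positive factor $|\lambda_w|^2$ to $N_{E/k}(\lambda)$. Everything after that is a short enumeration.
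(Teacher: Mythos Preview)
Your proposal is correct and follows exactly the approach the paper intends: the paper's proof is the single sentence ``These conditions are immediate from the structure of the trace form $\Tr_{E\otimes_k A/A}(\lambda x\sigma(y))$,'' and your place-by-place dictionary of local Hermitian pieces is precisely what makes that immediacy explicit. In particular, your treatment of the discriminant sign via $N_{E/k}(\lambda)\delta_{E/k}$ and the hyperbolic contribution from complex places of $E$ (or split $A$) is the content the paper leaves to the reader.
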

These conditions are immediate from the structure of the trace form $\Tr_{E\otimes_k A/A}( \lambda x \sigma(y) )$.

As in the previous remark, by replacing $\lambda$ by $\delta_{E/k}\lambda$ we obtain that the following corollary.
\begin{cor}\label{cor:ClassifyToriG2R}
The $G_k(k)$-conjugacy classes of maximal tori $T_k$ in $G_k$ are in bijection with triples $(A,E,\lambda)$
where $A$ and $E$ are respectively quadratic and cubic extensions of $k$, the element $\lambda$ is in the kernel of the map:
\[ (E^\times)/N_{E \otimes_k A/E}((E \otimes_k A)^\times)/\Aut_k(E) \overset{N_{E/k}}\longrightarrow (k^\times)/N_{A/k}(A^\times) \]
and such that the triple $(A,E,\lambda)$ satisfies the conditions of Proposition \ref{prop:ClassifyToriG2R}.
\end{cor}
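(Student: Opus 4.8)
The plan is to obtain this as a direct consequence of Theorem \ref{thm:ClassifyToriG2} via the change of variable $\lambda \mapsto \delta_{E/k}\lambda$ already announced in Remark \ref{rem:renormalize}. Recall that Theorem \ref{thm:ClassifyToriG2} parametrizes the $G_k(k)$-conjugacy classes of maximal tori by triples $(A,E,\lambda)$ with $\lambda$ in $(E^\times)/N_{E\otimes_k A/E}((E\otimes_k A)^\times)\Aut_k(E)$, subject to the requirement that the Hermitian form $\Tr_{E\otimes_k A/A}(\lambda x\sigma(y))$ have discriminant $1$ and the prescribed signatures, and recall that its discriminant is $N_{E/k}(\lambda)\delta_{E/k}$ in $k^\times/N_{A/k}(A^\times)$. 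So it suffices to check that the substitution gives a bijection between these triples and those described in the corollary, compatibly with the stated conditions.

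First I would check that $\lambda \mapsto \delta_{E/k}\lambda$ is a well-defined involution of the parametrizing set: the class $\delta_{E/k}$ is canonical in $k^\times/(k^\times)^2$, and $(k^\times)^2 \subseteq N_{E\otimes_k A/E}((E\otimes_k A)^\times)$ since $c^2 = N_{E\otimes_k A/E}(c)$ for $c\in E^\times$, while $\delta_{E/k}^2 \in (k^\times)^2$ makes the map an involution. Then I would transport the discriminant condition: writing $\lambda = \delta_{E/k}\lambda'$ and using $N_{E/k}(\delta_{E/k}) = \delta_{E/k}^3$, the discriminant becomes $\delta_{E/k}^4 N_{E/k}(\lambda')$, which equals $N_{E/k}(\lambda')$ modulo $(k^\times)^2$, hence modulo $N_{A/k}(A^\times)$; so the discriminant-$1$ condition on $(A,E,\lambda)$ is exactly the condition that $\lambda'$ lie in the kernel of $N_{E/k}$ to $k^\times/N_{A/k}(A^\times)$ appearing in the corollary.

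Finally I would transport the signature conditions. Since $\delta_{E/k}$ is positive at a real place $\nu$ of $k$ precisely when $\nu$ is unramified in $E$ (equivalently, $E$ is totally real over $\nu$), passing from $\lambda$ to $\delta_{E/k}\lambda$ preserves all signs at the real places of $E$ over such a $\nu$ and reverses the sign at the single real place of $E$ over any $\nu$ at which $E$ is not totally real. Matching this against the case analysis of Proposition \ref{prop:ClassifyToriG2R} --- which is itself merely the translation of the signature conditions on the trace form --- shows that the positivity and indefiniteness requirements of Theorem \ref{thm:ClassifyToriG2} become exactly the conditions of that proposition for the renormalized data, and the corollary follows. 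I do not expect a genuine obstacle here: the only point requiring care is staying consistent about which of the two normalizations a given signature statement refers to (and noting that $\delta_{E/k} \equiv \delta_{E/k}^{-1}$ modulo $N_{A/k}(A^\times)$, which is why the ambiguous subitem of Proposition \ref{prop:ClassifyToriG2R} may be phrased in either convention).
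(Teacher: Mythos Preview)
Your proposal is correct and is precisely the paper's approach: the paper derives the corollary in one line by invoking the substitution $\lambda \mapsto \delta_{E/k}\lambda$ from Remark~\ref{rem:renormalize}, and you have simply spelled out the bookkeeping behind that substitution (well-definedness, discriminant computation, and the effect on signatures via the sign of $\delta_{E/k}$ at real places). There is nothing further to add.
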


\begin{rmk}
In order to account for the action of $\Aut_k(E)$ on the options for rational conjugacy classes we observe that for a cubic \'etale extension this group is either $\Sigma_3$ is $E$ is totally split, $C_3$ if $E$ is a cyclic field extension, $C_2$ if $E\simeq \Delta\times k$ or trivial otherwise ($E$ is a field extension but not a Galois extension).

For a localization $k_\nu$ where $A_\nu$ is not split (so that the kernel is not trivial to begin with), we have the following cases:
\begin{itemize}
\item $\Aut_k(E) \simeq \Sigma_3$ then there are $2$ orbits of $\Sigma_3$, the orbit of the trivial element and the orbit of non-trivial elements.
\item $\Aut_k(E) \simeq C_3$, and $E_\nu$ splits, then there are $2$ orbits of $C_3$, the orbit of the trivial element and the orbit of non-trivial elements.
\item $\Aut_k(E) \simeq C_3$, and $E_\nu$ does not split, then the kernel is already trivial, and there is a unique orbit.
\item $\Aut_k(E) \simeq C_2$, and $E_\nu$ splits, then the kernel has $4$ elements, and they are in $3$ orbits under $\Aut_k(E)$.
\item $\Aut_k(E) \simeq C_2$, and $E_\nu$ does not split, then the kernel has $2$ elements, and they are in $2$ orbits under $\Aut_k(E)$.
\item $\Aut_k(E) \simeq \{1\}$, and $E_\nu$ does not split, then the kernel is already trivial, and there is a unique orbit.
\item $\Aut_k(E) \simeq \{1\}$, and $E_\nu$ splits completely, then the kernel has $4$ elements, and there are $4$ orbits.
\item $\Aut_k(E) \simeq \{1\}$, and $E_\nu$ splits partially, then the kernel has $2$ elements, and there are $2$ orbits.
\end{itemize}
Note that for a global field, $\Aut_k(E)$ is acting on the global points of $(E^\times)/N_{E \otimes_k A/E}((E \otimes_k A)^\times)$ and not each localization separately. That is, it acts diagonally on the adelic points.
\end{rmk}

\section{Group Cohomology Interpretation}
\label{sec:groupcohom}

In this final section we describe the connection between the concrete description of the previous section and that which would arise via Galois cohomology.
For the purpose of this section fix $k$, an octonion algebra $\bO$ over $k$, the group $G_k$, a maximal torus $T_k$ and the unique A2 subgroup $H_k$ containing $T_k$.

\begin{thm}
\label{thm:gcm}
Fix a semisimple algebraic group $G$ defined over $k$, a subgroup $H$ defined over $k$ and let $N$ denote the normalizer of $H$ in $G$.
Then the kernel of the map:
\[ H^1(\Gal(\overline{k}/k), N(\overline{k}) ) \rightarrow H^1(\Gal(\overline{k}/k), G(\overline{k}) ) \]
classifies the $G(k)$-conjugacy classes of $G(\overline{k})$-conjugates of $H$ which are defined over $k$.
\end{thm}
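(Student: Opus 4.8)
The plan is to identify the set of $G(\overline{k})$-conjugates of $H$ defined over $k$ with the $k$-points of the homogeneous space $G/N$, and then apply the standard non-abelian cohomology sequence attached to the closed subgroup $N\leq G$. First I would note that, since $N=N_G(H)$ is precisely the stabilizer of $H$ for the conjugation action of $G$ on its closed subgroups, the orbit of $H$ is $G$-equivariantly and $k$-rationally identified with $G/N$, the quotient existing as a quasi-projective $k$-variety. A conjugate ${}^{g}H=gHg^{-1}$, $g\in G(\overline{k})$, is defined over $k$ exactly when it is stable under $\Gal(\overline{k}/k)$, and by Galois descent for closed subschemes this holds if and only if the coset $gN\in(G/N)(\overline{k})$ is fixed by $\Gal(\overline{k}/k)$; hence the set of such conjugates is canonically $(G/N)(k)$, on which $G(k)$ acts by left translation, i.e. by conjugation of subgroups.

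Next I would invoke the exact sequence of pointed sets coming from the exact sequence of $\Gal(\overline{k}/k)$-sets $1\to N(\overline{k})\to G(\overline{k})\to(G/N)(\overline{k})\to1$:
\[ 1\to N(k)\to G(k)\to(G/N)(k)\xrightarrow{\delta}H^1(\Gal(\overline{k}/k),N(\overline{k}))\to H^1(\Gal(\overline{k}/k),G(\overline{k})), \]
where $\delta(gN)$ is the class of the $1$-cocycle $s\mapsto g^{-1}\cdot{}^{s}g$, which indeed takes values in $N(\overline{k})$ precisely because $gN$ is Galois-fixed. By construction the image of $\delta$ equals the kernel of the last arrow, so the content of the theorem is the computation of the fibres of $\delta$. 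The standard twisting argument (see \cite[Ch.~I, \S5]{Serre_cohom}) shows $\delta(gN)=\delta(g'N)$ if and only if $gN$ and $g'N$ lie in one $G(k)$-orbit: if the cocycles $g^{-1}\,{}^{s}g$ and $(g')^{-1}\,{}^{s}g'=n^{-1}(g^{-1}\,{}^{s}g)\,{}^{s}n$ are cohomologous via $n\in N(\overline{k})$, then $\gamma:=g'n^{-1}g^{-1}$ is checked to be $\Gal(\overline{k}/k)$-fixed, hence in $G(k)$, and $\gamma\cdot gN=g'N$; conversely a $G(k)$-translate $\gamma\cdot gN$ admits the representative $\gamma g$, which gives the same cocycle. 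Thus $\delta$ induces a bijection of $G(k)\backslash(G/N)(k)$ onto $\ker\bigl(H^1(\Gal(\overline{k}/k),N(\overline{k}))\to H^1(\Gal(\overline{k}/k),G(\overline{k}))\bigr)$.

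Putting the two steps together proves the theorem, since $G(k)$-orbits on $(G/N)(k)$ are exactly the $G(k)$-conjugacy classes of $k$-rational $G(\overline{k})$-conjugates of $H$. I expect the only delicate point to be the rationality bookkeeping in the first step, i.e. matching ``$gHg^{-1}$ is defined over $k$'' with ``$gN$ is Galois-fixed'', which rests on the existence of $G/N$ as a $k$-variety and on faithfully-flat (Galois) descent for closed subschemes; the remainder is the routine non-abelian cohomology formalism of a subgroup. In the write-up I would in fact prefer to bypass $G/N$ altogether and define the bijection directly on cocycles — sending $gHg^{-1}$ to the class of $s\mapsto g^{-1}\,{}^{s}g$ in $H^1(\Gal(\overline{k}/k),N(\overline{k}))$, and checking well-definedness, surjectivity onto the kernel, and injectivity modulo $G(k)$-conjugacy by the same short computations — which keeps the argument self-contained and elementary.
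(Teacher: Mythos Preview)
Your argument is correct and is the standard one: identify the $k$-rational $G(\overline{k})$-conjugates of $H$ with the Galois-fixed points of $G/N$, then read off the bijection from the exact sequence of pointed sets for $N\hookrightarrow G$ (Serre, \emph{Galois cohomology}, I.5.4, Cor.~1). The paper does not give its own proof of this statement at all; it simply cites \cite[Lemma~6.2]{ReederElliptic}, so there is nothing to compare against beyond noting that your write-up supplies exactly the routine non-abelian cohomology argument that the citation stands in for.
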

See \cite[Lemma 6.2]{ReederElliptic}.

\begin{rmk}
Via twisting the set $H^1(\Gal(\overline{k}/k), N(\overline{k}) )$ can be seen to classify $G(\overline{k})$-conjugates of $H$ which happen to be defined over $k$ appearing in pure inner forms of $G$.
For the groups $G$ of type G2, the set of pure inner forms coincides with the set of forms. Thus the goal of this section is to relate the results of the previous section to a description of the map:
\[ H^1(\Gal(\overline{k}/k), N(\overline{k}) ) \rightarrow H^1(\Gal(\overline{k}/k), G(\overline{k}) ). \]
\end{rmk}

\begin{prop}
\label{prop:normalizer}
We have the following:
\begin{itemize}
\item
The normalizer $N_k$ of $T_k$ contains an outer automorphism of $H_k$.
\item
There is a group $\tilde{A}_k$ together with maps:
\[ H_k \injects \tilde{A}_k \surjects \Aut(H_k) \]
where the kernel of the map $\tilde{A}_k\rightarrow  \Aut(H_k)$ is the image of the center $Z(H_k)$ of $H_k$.
\end{itemize}
\end{prop}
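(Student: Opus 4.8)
My plan is to produce a single explicit order-$2$ element $j\in G_k(k)$, show $N_{G_k}(H_k)=H_k\sqcup H_k\,j$, and then read off both assertions from the structure of this normalizer. To build $j$: recall $H_k=\Stab_x(G_k)$ for a $k$-rational non-isotropic $x\in\bO^0$ and that $\bO\simeq A_{x,y,z}$ is obtained from $A_x$ by the Cayley--Dickson construction. Since $A_x$ is a commutative quadratic \'etale algebra, its standard involution $\sigma_{A_x}$ is a $k$-algebra automorphism of $A_x$ that commutes with $\sigma_{A_x}$ itself; iterating the formula $[p,q]\mapsto[\sigma(p),\sigma(q)]$ up the tower $A_x\subset A_{x,y}\subset A_{x,y,z}=\bO$ therefore yields a well-defined algebra automorphism $j\in G_k(k)$ with $j^2=\id$ and $j(x)=-x$, whose restriction to $A_x^\perp$ is a $\sigma_{A_x}$-semilinear $k$-linear map. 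First I would check, using that $j$ preserves $N_{\bO}$ and sends $x\mapsto -x$, that $j|_{A_x^\perp}$ satisfies $H(jv,jw)=\sigma_{A_x}(H(v,w))$, so that conjugation by $j$ is an automorphism $\theta$ of $H_k=\SU_{A_x^\perp}$, and that over $\overline k$ this $\theta$ interchanges the two $3$-dimensional summands of $A_x^\perp\otimes\overline k$, i.e.\ it induces the non-trivial diagram automorphism of $A_2$. Since $j(x)=-x$ spans the line $kx$, $j$ normalizes $H_k=\Stab_x(G_k)$; conversely any $g\in G_k$ normalizing $H_k$ must preserve the unique $H_k$-fixed line $kx$ in $\bO^0$, so $g(x)=\pm x$, giving $N_{G_k}(H_k)=H_k\sqcup H_k\,j$ (with $H_k\,j=\{g:g(x)=-x\}$).

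\textbf{First part.} Because every maximal torus of $G_k$ lies in a \emph{unique} $A_2$-subgroup, any element of $N_k=N_{G_k}(T_k)$ normalizes $H_k$; hence $N_k\subseteq N_{G_k}(H_k)=H_k\sqcup H_k\,j$, and it suffices to exhibit an element of $N_k$ in the coset $H_k\,j$, since every element of $H_k\,j$ acts on $H_k$ by $\mathrm{conj}_h\circ\theta$ for some $h\in H_k$, hence as an outer automorphism. So I must find $h\in H_k(k)$ with $(hj)T_k(hj)^{-1}=T_k$, i.e.\ with $h\,(jT_kj^{-1})\,h^{-1}=T_k$; equivalently, I must show that $jT_kj^{-1}=\theta(T_k)$ is $H_k(k)$-conjugate to $T_k$. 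Here is where I would invoke Theorem~\ref{thm:ToriInA2}: $jT_kj^{-1}$ is a maximal $k$-torus of $H_k$ and so corresponds to a pair $(E',\lambda')$; conjugation by the $\sigma_{A_x}$-semilinear isometry $j|_{A_x^\perp}$ carries the centralizing \'etale algebra $E\otimes_k A_x$ of $T_k$ $\sigma_{A_x}$-semilinearly onto that of $jT_kj^{-1}$ and fixes the defining scalar up to $\sigma$ (which is trivial, as $\sigma(\lambda)=\lambda$), so $(E',\lambda')$ agrees with $(E,\lambda)$ up to the $\Aut_k(E)$-ambiguity of Theorem~\ref{thm:ToriInA2}; thus $jT_kj^{-1}$ and $T_k$ lie in the same $H_k(k)$-conjugacy class, and a translate $hj$ of $j$ normalizes $T_k$ and induces an outer automorphism of $H_k$. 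I expect this matching of the intrinsically octonionic $j$ with the Hermitian-space model of $T_k$ to be the main obstacle; it is precisely the point passed over in the proof of Theorem~\ref{thm:ToriInG2} with the phrase ``tori in unitary groups are self-adjoint''.

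\textbf{Second part.} I would take $\tilde A_k=N_{G_k}(H_k)$, with $H_k\injects\tilde A_k$ the inclusion and $\tilde A_k\surjects\Aut(H_k)$ the conjugation action. Surjectivity is immediate: the image contains $\Inn(H_k)$ (from $H_k$ acting on itself) together with the class of $\theta$ (from $j$), and since $\mathrm{Out}(H_k)$ has order $2$ for every form of $A_2$, the image is all of $\Aut(H_k)$. The kernel of $\tilde A_k\to\Aut(H_k)$ is $Z_{G_k}(H_k)$, which obviously contains $Z(H_k)$; for the reverse inclusion I would argue that any $g\in Z_{G_k}(H_k)$ acts trivially on the $H_k$-trivial line $kx$ and, by Schur's lemma applied to the $H_k$-module $A_x^\perp$ (which over $\overline k$ is $V\oplus V^{*}$ with $V\not\simeq V^{*}$), acts on $A_x^\perp$ through the units of its endomorphism algebra $E\otimes_k A_x$, i.e.\ as multiplication by some $\mu$; compatibility with the octonion products $A_x^\perp\cdot A_x^\perp\to\bO$ then forces $\mu^3=1$, so $g\in\mu_3^\xi=Z(H_k)$. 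Hence $\ker(\tilde A_k\to\Aut(H_k))=Z(H_k)$, which is the image of $Z(H_k)$ under $H_k\injects\tilde A_k$, as required.
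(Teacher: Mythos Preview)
Your element $j$ is exactly the paper's element $g$: the paper takes the automorphism sending $x\mapsto -x$ while fixing $y,z$, notes that on $A_x^\perp$ this is $\sigma_{A_x}$-conjugation in the basis $\{y,z,yz\}$, and concludes it realizes the outer automorphism of $H_k$. So at the level of the construction you and the paper agree.

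Where you diverge is in the first bullet. The paper does not pass through $N_{G_k}(H_k)$ and then try to drag $j$ back into $N_{G_k}(T_k)$ by an $H_k(k)$-conjugation. Instead it asserts directly that the outer automorphism induced by $g$ already stabilizes $T_k$: ``By base change to the algebraic closure, we can check that the outer automorphism acts as inversion on the torus, in particular all tori in $H_k$ are self adjoint and this action induces an involution of $T_k$.'' The underlying point is that the $-1$ element of $W(\mathrm{G}_2)$ lies outside $W(\mathrm{A}_2)$ and acts on any maximal torus by inversion; the self-adjointness of $E\otimes_k A_x$ under the Hermitian involution $\tau$ is what makes this visible rationally. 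Your route via Theorem~\ref{thm:ToriInA2} is a legitimate and more explicit substitute for this sentence, and there is no circularity (Theorem~\ref{thm:ToriInA2} is an imported result about special unitary groups), but it is heavier than what the paper does. What it buys you is that you never have to unpack what ``self-adjoint'' means here; what the paper's argument buys is that one never touches the $(E,\lambda)$-parameters at all.

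For the second bullet the paper is terser still: it simply sets $\tilde A_k=N_{G_k}(H_k)$, observes it is generated by $H_k$ and $g$, and leaves the kernel claim implicit. Your argument is more complete, but note a slip and an available simplification. The $H_k$-endomorphism algebra of $A_x^\perp$ is $A_x$, not $E\otimes_k A_x$ (over $\overline k$ it is $\overline k\times\overline k$, descending to $A_x$). More to the point, once you know $N_{G_k}(H_k)=H_k\sqcup H_k\,j$ and that every element of $H_k\,j$ induces an \emph{outer} automorphism, the kernel computation is immediate: $Z_{G_k}(H_k)\subset N_{G_k}(H_k)$ cannot meet $H_k\,j$, so $Z_{G_k}(H_k)\subset H_k$ and hence equals $Z(H_k)$. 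No Schur argument is needed.
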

\begin{proof}
Let $x$ be a $k$-rational point of $\bO^0$ stabilized by $H_k$.
Consider the element $g\in G_k$ which takes $x$ to $-x$, and fixes $y,z$.
This element then fixes $yz$ and takes each of $x,xy,xz,(xy)z$, respectively, to $-x,-xy,-xz,(-xy)z$.
Such a map is induced by conjugation on the underlying Hermitian space $A_x^\perp$, hence gives the outer automorphism of $H_k$.
By base change to the algebraic closure, we can check that the outer automorphism acts as inversion on the torus, in particular all tori in $H_k$ are self adjoint and this action induces an involution of $T_k$.
\end{proof}

Define  $\tilde{A}_k$ to be the normalizer of $H_k$ in $G_k$. It is clear that $\tilde{A}_k$ is generated by $H_k$ and the element $g$ as in the proposition.

\begin{prop}\label{prop:CohomFormA2inG2}
The forms of $H_k$ which may embed in some form of $G_k$ are the ``pure outer forms" of $H_k$, namely they arise from
\[ H^1(\Gal(\overline{k}/k), \tilde{A}_k(\overline{k})) \rightarrow H^1(\Gal(\overline{k}/k), \Aut_{\overline{k}}(H_k) ). \]

The forms of $H_k$ which embed in $G_k$ are additionally in the kernel of the map:
\[ H^1(\Gal(\overline{k}/k), \tilde{A}_k(\overline{k}) ) \rightarrow  H^1(\Gal(\overline{k}/k), G_k(\overline{k}) ). \]
\end{prop}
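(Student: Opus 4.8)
The plan is to recognize the statement as an instance of the standard descent dictionary between embeddings and cocycles valued in the normalizer, the engine being Theorem \ref{thm:gcm} applied to $N=\tilde A_k=N_{G_k}(H_k)$ (Proposition \ref{prop:normalizer}), lubricated by two structural facts about groups of type G2. First I would record those facts. Since a G2 group has trivial center and trivial outer automorphism group, $\Aut(G_{\overline k})=G_{\overline k}(\overline k)$ (the computation preceding Proposition \ref{prop:FormsG2}); hence every form of $G_k$ is a pure inner form, the isomorphism classes of forms are classified by $H^1(\Gal(\overline k/k),G_k(\overline k))$, and $G_k$ itself is the trivial class. Second, over $\overline k$ every embedding of an A2 group into $G_{\overline k}$ is $G_{\overline k}(\overline k)$-conjugate to $H_{\overline k}\injects G_{\overline k}$, with image the stabilizer of a non-isotropic line of $\bO^0$; this combines transitivity of $G_{\overline k}(\overline k)$ on non-isotropic lines (proof of Theorem \ref{thm:SUclassification}) with the fact, noted in the corollary to that theorem, that A2 has a unique six-dimensional representation up to isomorphism and outer automorphism.

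For the first assertion I would prove both inclusions. Given a form $H'_k$ with an embedding $\iota\colon H'_k\injects G'_k$ into some form $G'_k$ of $G_k$, choose a $\overline k$-isomorphism $\psi\colon G_{\overline k}\xrightarrow{\sim}G'_{\overline k}$ and, using the $\overline k$-conjugacy just recalled, modify it by an inner automorphism so that $\psi(H_{\overline k})=\iota(H'_{\overline k})$. The cocycle $c_\gamma=\psi^{-1}\,{}^\gamma\psi\in G_k(\overline k)$ defining $G'_k$ then normalizes $H_{\overline k}$: indeed $H_{\overline k}$ and $\iota(H'_{\overline k})$ are both $k$-rational, so ${}^\gamma\psi$ again carries one onto the other. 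Thus $c$ takes values in $N_{G_{\overline k}}(H_{\overline k})(\overline k)=\tilde A_k(\overline k)$, and by construction the image of $[c]$ under $\tilde A_k\to\Aut(H_k)$ is the class of $H'_k$. Conversely, starting from any cocycle $c$ valued in $\tilde A_k(\overline k)$, I would twist the whole chain $H_k\injects\tilde A_k\injects G_k$ by the inner action of $c$ — which preserves each term because $c_\gamma$ normalizes $H_k$, hence also $\tilde A_k=N_{G_k}(H_k)$ and $G_k$ — to obtain $k$-groups ${}_cH_k\injects{}_c\tilde A_k\injects{}_cG_k$, where ${}_cG_k$ is a form of $G_k$ and ${}_cH_k$ is exactly the form of $H_k$ attached to the image of $[c]$ in $H^1(\Gal(\overline k/k),\Aut(H_k))$; this exhibits that form inside a form of $G_k$.

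The second assertion is the specialization to $G'_k=G_k$. If $H'_k$ embeds in the given $G_k$, then the cocycle $c$ constructed above is cohomologous to the trivial cocycle of $G_k(\overline k)$, so $[c]$ lies in the kernel of $H^1(\Gal(\overline k/k),\tilde A_k(\overline k))\to H^1(\Gal(\overline k/k),G_k(\overline k))$; conversely a kernel class yields ${}_cG_k\simeq G_k$, hence an embedding of the corresponding form of $H_k$ into $G_k$. Equivalently one reads this off Theorem \ref{thm:gcm} directly, whose kernel classifies the $G_k(k)$-conjugacy classes of $\overline k$-conjugates of $H_k$ defined over $k$. I expect the only genuinely delicate point to be the reduction $\psi(H_{\overline k})=\iota(H'_{\overline k})$: it rests squarely on the uniqueness up to $G_{\overline k}(\overline k)$-conjugacy of A2-in-G2 embeddings over $\overline k$; granting that, everything else is formal twisting together with the identification of a twisted group with its cohomology class.
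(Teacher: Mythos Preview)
Your proposal is correct and follows the same route as the paper: both treat the proposition as a direct instance of Theorem \ref{thm:gcm} applied with $N=\tilde A_k$, combined with the fact that for G2 every form is a pure inner form (so that the full $H^1(\Gal(\overline k/k),\tilde A_k(\overline k))$, not just the kernel, parametrizes embeddings into \emph{some} form). The paper's own argument is little more than the sentence ``this is a restatement of Theorem \ref{thm:gcm}'' together with a pointer back to Theorem \ref{thm:SUclassification} and Corollary \ref{cor:SUoverR}; you have unpacked the same idea by writing out the cocycle $c_\gamma=\psi^{-1}\,{}^\gamma\psi$ explicitly and checking it lands in $\tilde A_k(\overline k)$, and you correctly isolate the one non-formal ingredient, namely that over $\overline k$ all A2-in-G2 embeddings are $G_{\overline k}(\overline k)$-conjugate.
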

\begin{proof}[Remarks on Proof]
This is a restatement of Theorem \ref{thm:gcm}, but notice that the first statement agrees with our results from Theorem \ref{thm:SUclassification} in light of Proposition \ref{prop:FormsA2}.
The fact that rational conjugacy is equivalent to rational isomorphism is clear in the observation that
$H^1(\Gal(\overline{k}/k), \tilde{A}_k(\overline{k}))$ classifies both.

In light of this, we may reinterpret Corollary \ref{cor:SUoverR} and Proposition \ref{prop:FormsG2} as giving a concrete description of the second map. 
Thus proving the second statement amounts to showing that 
certain explicit cocycles in $H^1(\Gal(\bC/\bR), \tilde{A}_k(\bC) )$ split in $H^1(\Gal(\bC/\bR), G_k(\bC) )$.
\end{proof}

\begin{prop}
\label{prop:gchk}
Let $M_k$ be the normalizer in $H_k$ of $T$.
The forms of $T$ which embed in $H_k$ are those in the image of:
\[  H^1(\Gal(\overline{k}/k), M_k(\overline{k})) \rightarrow H^1(\Gal(\overline{k}/k), \Aut_{\overline{k}}(T)) \]
and in the kernel of:
\[ H^1(\Gal(\overline{k}/k), M_k(\overline{k})) \rightarrow  H^1(\Gal(\overline{k}/k), H_k(\overline{k}) ).\]
\end{prop}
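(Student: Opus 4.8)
The plan is to obtain this as a direct specialization of Theorem \ref{thm:gcm}, taking the triple $(G,H,N)$ there to be $(H_k, T, M_k)$. First I would record the reduction: since $H_k$ is connected reductive, all of its maximal tori become conjugate over $\overline{k}$, so an $H_k(\overline{k})$-conjugate of $T$ defined over $k$ is the same thing as a maximal $k$-torus of $H_k$. Theorem \ref{thm:gcm} then says that the kernel of
\[ H^1(\Gal(\overline{k}/k), M_k(\overline{k})) \rightarrow H^1(\Gal(\overline{k}/k), H_k(\overline{k})) \]
is in bijection with the $H_k(k)$-conjugacy classes of such tori. This already supplies the ``kernel'' half of the statement.

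Next I would identify, for a cocycle $\xi$ lying in that kernel, the $k$-isomorphism class of the associated torus. The subtorus of $H_k$ cut out by $\xi$ carries the Galois action $\gamma \mapsto \Int(\xi_\gamma)\circ\gamma$; because $M_k$ normalizes $T$, each $\Int(\xi_\gamma)$ restricts to an element of $\Aut_{\overline{k}}(T)$, and these restrictions assemble into the image of $\xi$ under the conjugation map $M_k(\overline{k}) \rightarrow \Aut_{\overline{k}}(T)$. Hence the $k$-form of $T$ obtained is precisely the pushforward of $\xi$ along
\[ H^1(\Gal(\overline{k}/k), M_k(\overline{k})) \rightarrow H^1(\Gal(\overline{k}/k), \Aut_{\overline{k}}(T)), \]
so it lies in the image of the first map; conversely every class in that image which comes from the kernel of the second map is realized by the corresponding subtorus. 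It is worth noting, as in Section \ref{sec:forms}, that $T(\overline{k})$ acts trivially on $T$ by conjugation, so this first map factors through the Weyl group $W = M_k/T$ acting on the cocharacter lattice of $T$.

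The one substantive point — and the step I would expect to require the most care — is the compatibility asserted in the second paragraph: that the abstract twist of $T$ by $\xi \in H^1(\Gal(\overline{k}/k),M_k(\overline{k}))$ and the subtorus of $H_k$ defined by the twisted Galois action yield the same $k$-form of $T$. This is exactly the same bookkeeping that appears in the proof of Proposition \ref{prop:ToriInGroups1} (``one checks that these give the same twisting''), and it reduces to unwinding the definition of a twisted form together with the $\Gal(\overline{k}/k)$-equivariance of $M_k \rightarrow \Aut(T)$. Everything else is formal, being a direct transcription of Theorem \ref{thm:gcm} into this setting.
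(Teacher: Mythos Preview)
Your argument is correct and is precisely the standard derivation: apply Theorem~\ref{thm:gcm} with the ambient group taken to be $H_k$, the subgroup to be $T$, and the normalizer to be $M_k$, then identify the resulting $k$-form of $T$ via the pushforward to $H^1(\Gal(\overline{k}/k),\Aut_{\overline{k}}(T))$. The paper does not give its own proof of this proposition at all; it simply records the statement and points to \cite{FioriRoe} and \cite[Ex.~6.129]{WalkerThesisTori} for the concrete dictionary with the $(E,\lambda)$ description, so your write-up is in fact more complete than what appears in the paper while following the same underlying idea.
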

The concrete relation between this and our description is found in either \cite{FioriRoe} or \cite[Ex. 6.129]{WalkerThesisTori}.

\begin{rmk}
There is an exact sequence:
\[ 1 \rightarrow T_k(\overline{k}) \rightarrow M_k(\overline{k}) \rightarrow \Sigma_3 \rightarrow 1. \]
Where $\Sigma_3$ denotes the symmetric group on three elements.
It is known that $H^1(\Gal(\overline{k}/k), \Sigma_3)$ classifies degree $3$ \'etale algebras over $k$.
\end{rmk}

\begin{prop}
Let $N_k$ denote the normalizer of $T$ in $G_k$.
The forms of $T$ which embed in $G_k$ are those in the image of:
\[  H^1(\Gal(\overline{k}/k), N_k(\overline{k})) \rightarrow H^1(\Gal(\overline{k}/k), \Aut_{\overline{k}}(T)) \]
and in the kernel of:
\[ H^1(\Gal(\overline{k}/k), N_k(\overline{k})) \rightarrow  H^1(\Gal(\overline{k}/k), G_k(\overline{k}) )\]
\end{prop}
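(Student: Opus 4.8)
The plan is to obtain this as the specialization of Theorem~\ref{thm:gcm} to the pair $T \subset G_k$, in exact parallel with the deduction of Proposition~\ref{prop:gchk} for $T \subset H_k$. First I would record the two maps in the statement. Since $T$ is a maximal torus of $G_k$, its centralizer in $G_k$ is $T$ itself, so conjugation defines a homomorphism from $N_k$ to the automorphism group scheme of $T$ with kernel $T$; applying $H^1(\Gal(\overline{k}/k), -)$ gives the first arrow, and the inclusion $N_k \injects G_k$ gives the second. I would also note that, since $T \simeq \bG_m^2$ has the same rank as $G_k$, every embedding into $G_k$ of a form of $T$ is necessarily a maximal torus, so ``forms of $T$ that embed in $G_k$'' and ``$k$-forms of maximal tori of $G_k$'' describe the same collection.

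Next I would invoke Theorem~\ref{thm:gcm} with $H = T$ and $N = N_k$: the kernel of the second map classifies the $G_k(k)$-conjugacy classes of those $G_k(\overline{k})$-conjugates of $T$ which are defined over $k$, and since all maximal tori of $G_{\overline{k}}$ are $G_k(\overline{k})$-conjugate these are precisely all maximal tori of $G_k$. What remains is the bookkeeping that a class in this kernel, pushed into $H^1(\Gal(\overline{k}/k), \Aut_{\overline{k}}(T))$ along the first map, records exactly the $k$-isomorphism class of the torus it cuts out. This is the standard twisting computation: given a $k$-rational conjugate $T' = gTg^{-1}$ with $g \in G_k(\overline{k})$, the cochain $\gamma \mapsto g^{-1}\gamma(g)$ lands in $N_k(\overline{k})$, is a cocycle, and transporting the $k$-structure on $T'$ back along $g$ realizes $T'$ as the twist of $T$ by its image in $H^1(\Gal(\overline{k}/k), \Aut_{\overline{k}}(T))$; conversely every cocycle of $N_k(\overline{k})$ which is trivial in $H^1(\Gal(\overline{k}/k), G_k(\overline{k}))$ arises from such a $T'$. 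I would carry this out as in the sketch of Proposition~\ref{prop:ToriInGroups1}, in its twisted version, with the triple $(G, N, \Aut(T))$ there replaced by $(G_k, N_k, \Aut_{\overline{k}}(T))$.

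The one genuinely delicate point --- the same one flagged in the proof of Proposition~\ref{prop:ToriInGroups1} --- is that a class in $H^1(\Gal(\overline{k}/k), N_k(\overline{k}))$ twists $T$ a priori in two ways: once through $N_k \to \Aut(T)$, and once as the restriction, to the twisted $T$, of the twisted form of $G_k$; one must check these two $k$-structures coincide. I expect this compatibility, rather than the appeal to Theorem~\ref{thm:gcm} itself, to be the main thing requiring care. It holds because both twists are computed from the same $N_k(\overline{k})$-valued cocycle acting by conjugation, so the inclusion $T \injects G_k$ is equivariant for the two twisted Galois actions on source and target. Finally I would remark that, because $N_k$ is generated by $M_k$ together with the element $g$ of Proposition~\ref{prop:normalizer} (so that $N_k/T$ is the order-$12$ Weyl group of $G_2$, containing $M_k/T \simeq \Sigma_3$ with index $2$), this proposition is the composite of Propositions~\ref{prop:CohomFormA2inG2} and~\ref{prop:gchk}, and that its conclusion matches the concrete classification of Theorem~\ref{thm:ClassifyToriG2}, which provides an independent consistency check.
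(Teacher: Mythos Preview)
Your proposal is correct and follows essentially the same route as the paper: both obtain the statement as a direct specialization of Theorem~\ref{thm:gcm} to the pair $T\subset G_k$, and both then link the cohomological description back to the concrete results via Propositions~\ref{prop:CohomFormA2inG2} and~\ref{prop:gchk} and the factorization through $\tilde{A}_k$. The only difference is one of emphasis: you spell out the twisting computation and the compatibility check in the style of Proposition~\ref{prop:ToriInGroups1}, whereas the paper instead records the exact sequence $1\to T_k(\overline{k})\to N_k(\overline{k})\to \Sigma_2\times\Sigma_3\to 1$ explicitly and uses it, together with a computation of $H^1(\Gal(\overline{k}/k),T(\overline{k}))$, to explain why rational conjugacy and rational isomorphism of tori need not coincide.
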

\begin{proof}[Remarks on proof]
This is a restatement of Theorem \ref{thm:gcm}, but notice that the statement follows by combing the results from Theorems \ref{thm:ToriInG2} and \ref{thm:ToriInA2} and  Propositions \ref{prop:CohomFormA2inG2} and \ref{prop:gchk}.

Moreover, there is an exact sequence:
\[ 1 \rightarrow T_k(\overline{k}) \rightarrow N_k(\overline{k}) \rightarrow \Sigma_2 \times \Sigma_3 \rightarrow 1 \]
$H^1(\Gal(\overline{k}/k), \Sigma_2 \times \Sigma_3)$ describes pairs of quadratic \'etale and cubic \'etale algebras.
The fact that rational conjugacy and rational isomorphism are not equivalent is a result of the fact that the map from $N_k(\overline{k}) \rightarrow \Aut(T)$ has kernel $T$ and
\[ H^1(\Gal(\overline{k}/k), T(\overline{k})) \]
is not necessarily trivial. This cohomology group is calculated via the exact sequence:
\[ H^0(\Gal(\overline{k}/k), T_E(\overline{k})) \rightarrow H^0(\Gal(\overline{k}/k), T_{E^\sigma}(\overline{k})) \rightarrow  H^1(\Gal(\overline{k}/k), T(\overline{k})) \rightarrow H^1(\Gal(\overline{k}/k), T_E(\overline{k})) \]
and the observation that by Hilbert's Theorem 90, the group $ H^1(\Gal(\overline{k}/k), T_E(\overline{k})) = \{ 1\}$.
Completing the computations agrees precisely with what is obtained from Proposition \ref{prop:ToriInGroups2} and Theorem \ref{thm:ClassifyToriG2}.

For the second claim notice that we have maps:
\[ H^1(\Gal(\overline{k}/k), N_k(\overline{k})) \rightarrow H^1(\Gal(\overline{k}/k), \tilde{A}_k(\overline{k}))  \rightarrow  H^1(\Gal(\overline{k}/k), G_k(\overline{k}) ).\]
The second claim is thus made equivalent to Proposition \ref{prop:gchk} and Corollary \ref{cor:SUoverR}.
\end{proof}

\section{Concluding Remarks}

We have been able to accomplish our three important goals:
\begin{enumerate}
\item We have given a concrete description and classification of the rational conjugacy classes of maximal tori in groups of type G2.
\item We have given a concrete description and classification of the rational conjugacy classes of simply connected A2 subgroups of G2.
\item Finally, we have been able to relate these concrete descriptions to the more abstract cohomological descriptions that were already available.
\end{enumerate}

It remains a goal to give concrete classifications for the rational conjugacy classes of maximal tori in all semi-simple and reductive groups.
The families of classical groups admit systematic approaches, and the work of \cite{Fiori1,FioriRoe} already handles many (though not all) of these.

The group G2 being the simplest of the exceptional groups was a natural candidate to be the first exceptional group to consider.
Though the approach here seems ad-hoc given the heavy reliance on the structure of $\bO$, it is expected that similar methods will work for the group $F4$ (this is an ongoing project) and with other ideas it is hoped that the cases of $E6,E7$ and $E8$ may be handled.

\section*{Acknowledgements}
I would like to thank Eva Bayer-Fluckiger, Skip Garibaldi, and Andrei Rapinchuk for various useful comments and suggestions.
I would like to thank Victoria de Quehen for her help in editing various drafts of this paper.

{}\ifx\XMetaCompile\undefined

\providecommand{\MR}[1]{}
\providecommand{\bysame}{\leavevmode\hbox to3em{\hrulefill}\thinspace}
\providecommand{\MR}{\relax\ifhmode\unskip\space\fi MR }
\providecommand{\MRhref}[2]{  \href{http://www.ams.org/mathscinet-getitem?mr=#1}{#2}
}
\providecommand{\href}[2]{#2}

\end{document}
{}\fi